\newcommand{\X}{\mathbb{X}}
\renewcommand{\>}{\rangle}
\newcommand{\<}{\langle}
\newtheorem{observation}{Remark}[section]
\newtheorem{lemma}[observation]{Lemma}  
\newtheorem{theorem}[observation]{Theorem}
\newtheorem{definition}[observation]{Definition}
\newtheorem{example}[observation]{Example}
\newtheorem{proposition}[observation]{Proposition} 
\newtheorem{corollary}[observation]{Corollary}
\newcommand*\circled[1]{\tikz[baseline=(char.base)]{
            \node[shape=circle,draw,inner sep=2pt] (char) {#1};}}
\newdimen\w@dth
\def\setw@dth#1#2{\setbox\z@\hbox{\scriptsize $#1$}\w@dth=\wd\z@
\setbox\@ne\hbox{\scriptsize $#2$}\ifnum\w@dth<\wd\@ne \w@dth=\wd\@ne \fi
\advance\w@dth by 1.2em}
\def\t@^#1_#2{\allowbreak\def\n@one{#1}\def\n@two{#2}\mathrel
{\setw@dth{#1}{#2}
\mathop{\hbox to \w@dth{\rightarrowfill}}\limits
\ifx\n@one\empty\else ^{\box\z@}\fi
\ifx\n@two\empty\else _{\box\@ne}\fi}}
\def\t@@^#1{\@ifnextchar_ {\t@^{#1}}{\t@^{#1}_{}}}
\def\t@left^#1_#2{\def\n@one{#1}\def\n@two{#2}\mathrel{\setw@dth{#1}{#2}
\mathop{\hbox to \w@dth{\leftarrowfill}}\limits
\ifx\n@one\empty\else ^{\box\z@}\fi
\ifx\n@two\empty\else _{\box\@ne}\fi}}
\def\t@@left^#1{\@ifnextchar_ {\t@left^{#1}}{\t@left^{#1}_{}}}
\def\two@^#1_#2{\def\n@one{#1}\def\n@two{#2}\mathrel{\setw@dth{#1}{#2}
\mathop{\vcenter{\hbox to \w@dth{\rightarrowfill}\kern-1.7ex
                 \hbox to \w@dth{\rightarrowfill}}%
       }\limits
\ifx\n@one\empty\else ^{\box\z@}\fi
\ifx\n@two\empty\else _{\box\@ne}\fi}}
\def\tw@@^#1{\@ifnextchar_ {\two@^{#1}}{\two@^{#1}_{}}}
\def\tofr@^#1_#2{\def\n@one{#1}\def\n@two{#2}\mathrel{\setw@dth{#1}{#2}
\mathop{\vcenter{\hbox to \w@dth{\rightarrowfill}\kern-1.7ex
                 \hbox to \w@dth{\leftarrowfill}}%
       }\limits
\ifx\n@one\empty\else ^{\box\z@}\fi
\ifx\n@two\empty\else _{\box\@ne}\fi}}
\def\t@fr@^#1{\@ifnextchar_ {\tofr@^{#1}}{\tofr@^{#1}_{}}}
\newdimen\W@dth
\def\setW@dth#1#2{\setbox\z@\hbox{$#1$}\W@dth=\wd\z@
\setbox\@ne\hbox{$#2$}\ifnum\W@dth<\wd\@ne \W@dth=\wd\@ne \fi
\advance\W@dth by 1.2em}
\def\T@^#1_#2{\allowbreak\def\N@one{#1}\def\N@two{#2}\mathrel
{\setW@dth{#1}{#2}
\mathop{\hbox to \W@dth{\rightarrowfill}}\limits
\ifx\N@one\empty\else ^{\box\z@}\fi
\ifx\N@two\empty\else _{\box\@ne}\fi}}
\def\T@@^#1{\@ifnextchar_ {\T@^{#1}}{\T@^{#1}_{}}}
\def\T@left^#1_#2{\def\N@one{#1}\def\N@two{#2}\mathrel{\setW@dth{#1}{#2}
\mathop{\hbox to \W@dth{\leftarrowfill}}\limits
\ifx\N@one\empty\else ^{\box\z@}\fi
\ifx\N@two\empty\else _{\box\@ne}\fi}}
\def\T@@left^#1{\@ifnextchar_ {\T@left^{#1}}{\T@left^{#1}_{}}}
\def\Tofr@^#1_#2{\def\N@one{#1}\def\N@two{#2}\mathrel{\setW@dth{#1}{#2}
\mathop{\vcenter{\hbox to \W@dth{\rightarrowfill}\kern-1.7ex
                 \hbox to \W@dth{\leftarrowfill}}%
       }\limits
\ifx\N@one\empty\else ^{\box\z@}\fi
\ifx\N@two\empty\else _{\box\@ne}\fi}}
\def\T@fr@^#1{\@ifnextchar_ {\Tofr@^{#1}}{\Tofr@^{#1}_{}}}
\def\Two@^#1_#2{\def\N@one{#1}\def\N@two{#2}\mathrel{\setW@dth{#1}{#2}
\mathop{\vcenter{\hbox to \W@dth{\rightarrowfill}\kern-1.7ex
                 \hbox to \W@dth{\rightarrowfill}}%
       }\limits
\ifx\N@one\empty\else ^{\box\z@}\fi
\ifx\N@two\empty\else _{\box\@ne}\fi}}
\def\Tw@@^#1{\@ifnextchar_ {\Two@^{#1}}{\Two@^{#1}_{}}}
\def\to{\@ifnextchar^ {\t@@}{\t@@^{}}}
\def\from{\@ifnextchar^ {\t@@left}{\t@@left^{}}}
\def\tofro{\@ifnextchar^ {\t@fr@}{\t@fr@^{}}}
\def\To{\@ifnextchar^ {\T@@}{\T@@^{}}}
\def\From{\@ifnextchar^ {\T@@left}{\T@@left^{}}}
\def\Two{\@ifnextchar^ {\Tw@@}{\Tw@@^{}}}
\def\Tofro{\@ifnextchar^ {\T@fr@}{\T@fr@^{}}}
\title{A Tangent Category Perspective on \\ Connections in Algebraic Geometry}
\author{G.S.H. Cruttwell\footnote{Partially supported by an NSERC Discovery grant.}, Jean-Simon Pacaud Lemay\footnote{The author is funded by an ARC DECRA (DE230100303).}, and Elias Vandenberg\footnote{Partially supported by an NSERC undergraduate research award}}
\begin{document}
\allowdisplaybreaks

\maketitle

\begin{abstract}
There is an abstract notion of connection in any tangent category.  In this paper, we show that when applied to the tangent category of affine schemes, this recreates the classical notion of a connection on a module (and similarly, in the tangent category of schemes, this recreates the notion of connection on a quasi-coherent sheaf of modules).  By contrast, we also show that in the tangent category of algebras, there are no non-trivial connections.  
\end{abstract}

\tableofcontents
\newpage

\section{Introduction}

Connections are fundamental tools in various kinds of geometry (such as differential geometry, algebraic geometry, non-commutative geometry, etc.), since they enable one to ``connect'' the infinitesimally close fibres of a bundle.  However, there are many different definitions of connections - even within the same setting (e.g. differential geometry). 
 Thus, it is natural to ask how one can relate and compare these different definitions of connections.

One way to approach this issue is via tangent categories \cite{RosickyTangentCats,cockett2014differential}.  A tangent category is a minimal axiomatic setting for working with differential structure. In particular, a tangent category (Sec \ref{sec:tan_cats}) consists of a category $\X$ equipped with an endofunctor $\mathsf{T}: \X \to \X$, where for an object $A$, $\mathsf{T}(A)$ should be thought as the tangent bundle of $A$, along with various natural transformations which capture the fundamental properties of the tangent bundle, such as a ``projection'' $\mathsf{p}: \mathsf{T} \Rightarrow 1_\mathbb{X}$ and a ``zero'' $0: 1_\mathbb{X} \Rightarrow \mathsf{T}$. 

An abstract notion of connections was developed for tangent categories by Cockett and the first named author in \cite{connections}. The starting point is a \textbf{differential bundle} \cite{cockett2018differential}, which is the generalization of a smooth vector bundle (in the category of smooth manifolds) to an arbitrary tangent category. Briefly, a differential bundle (Sec \ref{sec:diff_bun}) consists of a map ${\mathsf{q}: E \to A}$ in the tangent category such that the ``vertical bundle'' $V_\mathsf{q}$, that is, the pullback
    \[ \xymatrix{V_\mathsf{q} \ar[r] \ar[d] & TE \ar[d]^{\mathsf{T}(\mathsf{q})} \\ A \ar[r]_{0_A} & \mathsf{T}(A)} \]
is trivial, i.e., $V_\mathsf{q} \cong E \times_A E$. On such a structure in a tangent category, a \textbf{connection} (Sec \ref{sec:connections}) consists of a pair of maps
\begin{align*}
\mathsf{K}: \mathsf{T}(E) \to E && \mathsf{H}: \mathsf{T}(A) \times_A E \to \mathsf{T}(E)
\end{align*}
satisfying various axioms (Def \ref{def:K}, \ref{def:H}, \& \ref{def:connection}). The map $\mathsf{K}$ is called the \textbf{vertical connection}, and the map $\mathsf{H}$ is called the \textbf{horizontal connection}. It has previously been shown that in the tangent category of smooth manifolds, differential bundles are the same as vector bundles \cite{macadam2021vector}, and this notion of connection replicates the usual notion of a connection on a vector bundle \cite{connections}.  

The aim of this paper is to show that when applied to tangent categories in algebraic geometry, this abstract definition also recreates one of the key notions of connection there. This is not obvious as the definitions of connections found in algebraic geometry are quite different than the definitions of connections described in tangent categories. In algebraic geometry, the standard notion is of a connection on a module \cite{lang2012algebra,mangiarotti2000connections}, or more generally, a quasi-coherent sheaf of modules \cite{katz1970nilpotent}.  In particular, given a commutative ring $R$, a commutative $R$-algebra $A$, and an $A$-module $M$, a \textbf{connection on $M$} (Def \ref{def:module-connection}) consists of an $R$-linear map
    \[ \nabla: M \to \Omega(A) \otimes_A M \]
(where $\Omega(A)$ is the module of Kähler differentials of $A$ over $R$) which is not $A$-linear, but instead satisfies a Leibniz rule, that is, for all $a \in A$ and $m \in M$: 
    \[ \nabla(am) = \mathsf{d}(a) \otimes_A m + a\nabla(m) \]
This definition of a connection naturally extends to a quasi-coherent sheaf of modules of a scheme $A$ over some base scheme $B$.  
How can we connect this module definition of a connection with the abstract version of a connection in a tangent category?  To begin with, we have to find the right tangent category.  For any commutative ring $R$, the category of commutative $R$-algebras is a tangent category where  the tangent bundle is given by dual numbers.  Interestingly, we show that this tangent has \emph{no} non-trivial tangent category connections (Prop. \ref{prop:no_conn_in_algebra}). 

However, this is not quite the right tangent category for algebraic geometry. It is shown in \cite{cockett2014differential} and later developed more fully in \cite{cruttwell2023differential} that for any commutative ring $R$, the category of affine schemes over $R$ (that is, the opposite of the category of commutative $R$-algebras) is also a tangent category, where for a commutative $R$-algebra $A$, $\mathsf{T}(A)$ is the symmetric algebra (over $A$) of the module of Kähler differentials of $A$. A nice coincidence of structure was then demonstrated by the first and second named authors in \cite{cruttwell2023differential}: in this tangent category, differential bundles over a commutative algebra $A$ correspond exactly to modules over $A$. Similarly, in tangent categories of schemes, differential bundles correspond to quasi-coherent sheaves of modules. 
 
With this setup, the question is then whether the tangent category notion of a connection on a differential bundle, when considered in the tangent category of affine schemes, corresponds to the classical notion of a connection on a module.  That this is indeed the case is the main result of this paper (Thm \ref{thm:affine_connection_correspondence}); a similar result holds almost immediately for the category of schemes, where the corresponding notion is of a connection on a quasi-coherent sheaf of modules (Cor \ref{cor:scheme_connections}).  

However, there is more to be said, particularly about curvature. For a tangent category connection $(\mathsf{K},\mathsf{H})$, its \textbf{curvature} (Def \ref{def:curvature-K}) is given by comparing the following two composites:
\begin{align*}
    \mathsf{T}\mathsf{T}(E) \to^{\mathsf{T}(\mathsf{K})} \mathsf{T}(E) \to^{\mathsf{K}} E && \mathsf{T}\mathsf{T}(E) \to^{\mathsf{c}_E}  \mathsf{T}\mathsf{T}(E) \to^{\mathsf{T}(\mathsf{K})} \mathsf{T}(E) \to^{\mathsf{K}} E
\end{align*}
where $\mathsf{c}$ is one of the structural natural transformations of the tangent category (whose naturality represents the symmetry of mixed partial derivatives). On the other hand, for a connection $\nabla$ on a module $M$, its \textbf{curvature} (Def \ref{def:module-curvature}) is given by the map: 
\begin{align*}
\begin{array}[c]{c}
\nabla^2 \end{array}  := \begin{array}[c]{c} \xymatrixrowsep{1pc}\xymatrixcolsep{3pc}\xymatrix{M \ar[r]^-{ \nabla} & \Omega(A) \otimes_A M \ar[r]^-{ 1_{\Omega(A)} \otimes_A \nabla } & \Omega(A) \otimes_A \Omega(A)  \otimes_A M \ar[r]^-{  \omega \otimes_A 1_M} & \Omega^2(A) \otimes_A M}\end{array}
\end{align*}
Again, these two definitions of curvature look quite different. Nevertheless, we show that these two notions are essentially the same; in fact, they only differ by a factor of 2 (Cor \ref{cor:curvatures_match}). Similarly, in a general tangent category, a connection on the tangent bundle has a notion of \textbf{torsion} (Def \ref{def:torsion}). We investigate what that looks like in this algebraic geometry setting (we have not found an analogous notion in the algebraic geometry literature, so, to the best of our knowledge, this may be new) and define the notion of torsion for the module of Kähler differentials of a commutative algebra\footnote{Unfortunately, this is not related to the usual notion of torsion for a module.}. We then show that these two definitions of torsions similarly only differ by a factor of 2 (Cor \ref{cor:torsion-2}). 

There are several results about connections that are true in any tangent category; for example, one can pullback connections and connections are preserved under retractions. In Section \ref{sec:applying_theory}, we discuss some of these results and compare them to what is known in the literature about connections on modules. In particular, combining several tangent category results implies almost immediately that every smooth affine scheme has a connection on its module of Kähler differentials (Cor \ref{cor:smooth_have_connections}). That said, the corresponding result is not true for connections on more general schemes (Ex \ref{ex:no_connection_projective_line}). 

In addition, the literature on connections in algebraic geometry has surprisingly few concrete examples. We describe several in this paper (Sec \ref{sec:examples} \& \ref{sec:sexamples-curvature}), as it is helpful to see explicit examples when comparing how tangent category connections relate to module connections, and when looking at specific examples of curvature.

\section{Connections in Tangent Categories}\label{sec:tan_cats_full}

In this section, to set up notation and terminology, we begin by briefly reviewing tangent categories and differential bundles (for which we use the same notation as in \cite{cruttwell2023differential}) and then provide a more detailed review of connections. We invite the reader to see more detailed introductions to tangent categories in \cite{cockett2014differential, cruttwell2023differential, RosickyTangentCats}, differential bundles in \cite{ching:diff_bundles,cockett2018differential,cruttwell2023differential,macadam2021vector}, and connections in \cite{connections, lucyshyn2017geometric}.

\subsection{Tangent Categories}\label{sec:tan_cats}

Tangent categories formalize the properties of the tangent bundle on smooth manifolds from classical differential geometry. In this paper, we will, in fact, be working in a \emph{Rosický} tangent category, also called a tangent category with \emph{negatives}. The name Rosický tangent category is in honour of Jiri Rosický, who first introduced tangent categories with negatives (though not under that name) in \cite{RosickyTangentCats}. So a \textbf{Rosický tangent structure} on a category $\mathbb{X}$ is a septuple $\mathbb{T} = (\mathsf{T}, \mathsf{p}, +, 0, \ell, \mathsf{c}, -)$ consisting of: 
\begin{enumerate}[(i)]
\item An endofunctor $\mathsf{T}: \mathbb{X} \to  \mathbb{X}$, called the \textbf{tangent bundle functor};
\item A natural transformation $\mathsf{p}_A: \mathsf{T}(A) \to A$, called the \textbf{projection}, such that for each $n\in \mathbb{N}$, the pullback of $n$ copies of $\mathsf{p}_A$ exists, which we denote as $\mathsf{T}_n(A)$; 
\item A natural transformation\footnote{Note that by the universal property of the pullback, we have functors $\mathsf{T}_n: \mathbb{X} \to \mathbb{X}$.} $+_A: \mathsf{T}_2(A) \to \mathsf{T}(A)$, called the \textbf{sum};
\item A natural transformation $0_A: A \to \mathsf{T}(A)$, called the \textbf{zero};
\item A natural transformation $\ell_A: \mathsf{T}(A) \to \mathsf{T}^2(A)$, called the \textbf{vertical lift};
\item A natural transformation $\mathsf{c}_A: \mathsf{T}^2(A) \to \mathsf{T}^2(A)$, called the \textbf{canonical flip};
\item A natural transformation $-_A: \mathsf{T}(A) \to \mathsf{T}(A)$, called the \textbf{negative},
\end{enumerate}
such that the various axioms in \cite[Def 2.3 \& 3.3]{cockett2014differential} hold. Then a \textbf{Rosický tangent category} is a pair $(\mathbb{X}, \mathbb{T})$ consisting of a category $\mathbb{X}$ and a Rosický tangent structure $\mathbb{T}$ on $\mathbb{X}$.

The main intuition one should have for a tangent category is that the endofunctor $\mathsf{T}$ associates every object $A$ to an object $\mathsf{T}(A)$ that ``behaves like a tangent bundle'' for $A$. This behaviour is captured via the natural transformations of the tangent structure, which encode basic properties of the tangent bundle of a smooth manifold from differential geometry, including the natural projection, being a vector bundle, local triviality, linearity of the derivative, symmetry of the mixed partial derivatives, etc. As such, the canonical example of a (Rosický) tangent category is the category of smooth manifolds. In Sec \ref{sec:ALG-tancat}, we will review the tangent category of commutative algebras, whose tangent structure is induced by dual numbers, and in Sec \ref{sec:AFF-tangcat}, we will review the tangent category of affine schemes, whose tangent structure is induced by Kähler differentials. A list of other examples of tangent categories can be found in \cite[Ex 2.2]{cockett2018differential}. 

\subsection{Differential Bundles}\label{sec:diff_bun}

There are many important notions from differential geometry that can be formalized in a tangent category. In particular, \emph{differential bundles} generalize the notion of smooth vector bundles in a tangent category. In a Rosický tangent category\footnote{The definition we provide here is, in fact, that of a ``differential bundle with negatives''. However, as explained in \cite[Sec 2.10]{cruttwell2023differential}, in a Rosický tangent category, the concepts of a differential bundle and a differential bundle with negatives are the same.}, a \textbf{differential bundle} is a septuple $\mathcal{E} = (A, E, \mathsf{q}, \sigma, \mathsf{z}, \lambda, \iota)$ consisting of:
\begin{enumerate}[(i)]
\item Objects $A$ and $E$;
\item A map $\mathsf{q}: E \to A$, called the \textbf{projection}, such that for each $n \in \mathbb{N}$, the pullback of $n$ copies of $\mathsf{q}$ exists, which we denote as $E_n$,
\item A map $\sigma: E_2 \to E$, called the \textbf{sum};
\item A map $\mathsf{z}: A \to E$, called the \textbf{zero};
\item A map $\lambda: E \to \mathsf{T}(E)$, called the \textbf{lift};
\item A map $\iota: E \to E$, called the \textbf{negative}, 
\end{enumerate}
such that the axioms in \cite[Def 2.3]{cockett2018differential} hold. As a shorthand, we will often denote a differential bundle $\mathcal{E} = (A, E, \mathsf{q}, \sigma, \mathsf{z}, \lambda, \iota)$ simply by its projection $\mathsf{q}$ and say that $\mathsf{q}: E \to A$ is a differential bundle.

In \cite[Thm 4.2.7]{macadam2021vector}, MacAdam showed that in the tangent category of smooth manifolds, differential bundles correspond precisely to smooth vector bundles. In Sec \ref{sec:ALG-tancat} and Sec \ref{sec:AFF-tangcat}, we will review how differential bundles in the tangent category of commutative algebras and the tangent category of affine schemes both correspond to modules \cite[Thm 3.14 \& Thm 4.19]{cruttwell2023differential} (however, this correspondence is \emph{covariant} for commutative algebras but \emph{contravariant} for affine schemes). It is also worth noting that in \cite[Prop 4 \& Thm 6]{ching:diff_bundles}, Ching gives a novel characterization of differential bundles which gives alternative ways to prove these results - see \cite[Ex 16, 23 \& 24]{ching:diff_bundles}.

One way to build differential bundles is from the tangent bundle functor. For every object $A$, its tangent bundle is a differential bundle, that is, $(A, \mathsf{T}(A), \mathsf{p}_A, +_A, 0_A, \ell_A, -_A)$ is a differential bundle \cite[Ex 2.4.(ii)]{cockett2018differential}. We can also apply the tangent bundle functor to a differential bundle, that is, if $(A, E, \mathsf{q}, \sigma, \mathsf{z}, \lambda, \iota)$ is a differential bundle, then $\left(\mathsf{T}(\mathsf{q}): \mathsf{T}(E) \to \mathsf{T}(A), \mathsf{T}(\sigma), \mathsf{T}(\zeta), \mathsf{c}_E \circ \mathsf{T}(\lambda), \mathsf{T}(\iota)  \right)$ is also a differential bundle \cite[Lemma2.5]{cockett2018differential}, which we call the \emph{tangent bundle of a differential bundle}. It is important to note that the canonical flip is used to define the lift for the tangent bundle of a differential bundle. 

Now, a morphism between differential bundles is asked to preserve the projection and the lift. So let ${\mathsf{q}: E \to A}$ and $\mathsf{q}^\prime: E^\prime \to A^\prime$ be differential bundles. A \textbf{differential bundle morphism}\footnote{These were referred to as \emph{linear} differential bundle morphisms in \cite[Def 2.3]{cockett2018differential}; however, since these morphisms are the ones of primary importance in this paper, here we simply refer to them as differential bundle morphisms.} $(f,g): \mathsf{q} \to \mathsf{q}^\prime$ is a pair of maps ${f: E \to E^\prime}$ and $g: A \to A^\prime$ such that the following diagram commutes: 
   \begin{equation}\label{diffbunmap}\begin{gathered} 
   \xymatrixcolsep{5pc}\xymatrix{ E \ar[r]^-{f} \ar[d]_-{\mathsf{q}} & E^\prime \ar[d]^-{\mathsf{q}^\prime} &  E \ar[d]_-{\lambda} \ar[r]^-{f}  & E^\prime  \ar[d]^-{\lambda^\prime}    \\
 A \ar[r]_-{g} & A^\prime & \mathsf{T}(E) \ar[r]_-{\mathsf{T}(f)}  & \mathsf{T}(E^\prime)}  \end{gathered}\end{equation} 
One does not need to assume that differential bundle morphisms preserve the sum, zero, or negative since, surprisingly, this follows from preserving the lift \cite[Prop 2.16]{cockett2018differential}. Other properties and constructions of differential bundle morphisms can be found in \cite[Sec 2.5]{cockett2018differential} and \cite[Sec 2.4]{connections}. For a Rosický tangent category $(\mathbb{X}, \mathbb{T})$, we denote by $\mathsf{DBUN}\left(\mathbb{X}, \mathbb{T} \right)$ its category of differential bundles and differential bundle morphisms between them. 

 One can also add and subtract generalized elements of a differential bundle. Indeed, if $\mathsf{q}: E \to A$ is a differential bundle, and $f: X \to E$ and $g: X \to E$ are maps such that $f \mathsf{q} = g \mathsf{q}$, then define ${f +_{\mathsf{q}} g: X \to E}$ and $f -_{\mathsf{q}} g: X \to E$ respectively as the following composites: 
\[ f+_{\mathsf{q}} g :=  \xymatrixcolsep{5pc}\xymatrix{ X \ar[r]^-{\langle f,g \rangle} & E_2 \ar[r]^-{\sigma} &  E} \]
\[ f-_{\mathsf{q}} g :=  \xymatrixcolsep{5pc}\xymatrix{ X \ar[r]^-{\langle f, \iota \circ g \rangle} & E_2 \ar[r]^-{\sigma} &  E} \]
where $\langle-,- \rangle$ is the pairing operator induced by the universal property of the pullback. 


Lastly, there is also a useful operation involving differential bundles called \emph{bracketing}, which is induced from the universal property of the lift. So for a differential bundle $\mathsf{q}: E \to A$, we say that a map ${f: X \to \mathsf{T}(E)}$ satisfies the \textbf{bracketing condition} if the following equality holds: 
\begin{align}
    \mathsf{T}(\mathsf{q}) \circ f  =  0_A \circ \mathsf{q} \circ \mathsf{p}_E \circ f
\end{align}
If $f: X \to \mathsf{T}(E)$ satisfies the bracketing condition, then there exists a unique map $\lbrace f \rbrace: X \to E$ \cite[Lemma 2.10.(ii)]{cockett2018differential}, such that the following equality holds: 
\begin{align}
    f = \mathsf{T}(\sigma) \circ \left \langle \lambda \circ \lbrace f \rbrace, 0_E \circ \mathsf{p}_E \circ f \right \rangle
\end{align}
The map $\lbrace f \rbrace: X \to E$ is called the \textbf{bracketing} of $f$. 

\subsection{Connections}\label{sec:connections}

We now review the notion of a \emph{connection} in a tangent category, which, as the name suggests, generalizes a notion of connection from differential geometry. In a tangent category, a connection is defined on a differential bundle and is a pair consisting of a \emph{vertical} connection and a \emph{horizontal} connection. Connections on tangent bundles are called \emph{affine connections}. Both vertical connections and horizontal connections are defined in terms of differential bundle morphisms. Moreover, for a Rosický tangent category, it turns out that each part completely determines the connection in the sense that for each horizontal connection, there is a unique (effective) vertical connection, which makes it a connection, and vice versa.  

We begin by reviewing vertical connections. We provide the unpacked version of the definition from \cite[Lemma 3.3]{connections}: 

\begin{definition}\label{def:K} A \textbf{vertical connection} \cite[Def 3.2]{connections} on a differentiable bundle $\mathsf{q}: E \to A$ is a map ${\mathsf{K}: \mathsf{T}(E) \to E}$ such that the following diagrams commute: 
 \begin{equation}\label{eq:K}\begin{gathered}
\xymatrixcolsep{5pc}\xymatrix{ E \ar[r]^-{ \lambda } \ar@/_/@{=}[dr]^-{} & \mathsf{T}(E)  \ar@{}[dl]|(0.4){\text{\normalfont \textbf{[K.1]}}} \ar[d]^-{\mathsf{K}} & \mathsf{T}(E) \ar@{}[dr]|-{\text{\normalfont\textbf{[K.2]} }} \ar[r]^-{\mathsf{K}} \ar[d]_-{\mathsf{p}_E} & E  \ar[d]^-{\mathsf{q}}  \\
& E & E \ar[r]_-{\mathsf{q}} & A  \\
\mathsf{T}(E) \ar[r]^-{ \mathsf{K} } \ar@{}[dr]|-{\text{\normalfont\textbf{[K.3]} }} \ar[d]_-{ \ell_E } & E \ar[d]^-{ \lambda } & \mathsf{T}(E) \ar[r]^-{ \mathsf{K} } \ar@{}[ddr]|-{\text{\normalfont\textbf{[K.4]} }} \ar[d]_-{ \mathsf{T}(\lambda) } & E \ar[dd]^-{ \lambda }  \\
\mathsf{T}^2(E) \ar[r]_-{ \mathsf{T}\left( \mathsf{K} \right) }  & \mathsf{T}(E) & \mathsf{T}^2(E)  \ar[d]_-{ \mathsf{c}_E }  & \\
& & \mathsf{T}^2(E)\ar[r]_-{  \mathsf{T}\left( \mathsf{K} \right) }  & \mathsf{T}(E) } 
   \end{gathered}\end  {equation}
\end{definition}

The four diagrams of (\ref{eq:K}) amount to saying that a vertical connection is a retract of the lift (which is \textbf{[K.1]}) and also a differential bundle morphism in two different ways. Explicitly, \textbf{[K.2]} and \textbf{[K.3]} say that $(\mathsf{K}, \mathsf{q}): \mathsf{p}_E \to \mathsf{q}$ is a differential bundles morphism, while \textbf{[K.2]} (rewriting it using the naturality of $\mathsf{p}$ as $\mathsf{q} \circ \mathsf{K} = \mathsf{p}_A \circ \mathsf{T}(\mathsf{q})$) and \textbf{[K.4]} say that $(\mathsf{K}, \mathsf{p}_E): \mathsf{T}(\mathsf{q}) \to \mathsf{q}$ is a differential bundle morphism as well. 

On the other hand, in order to properly define a horizontal connection, we will need to assume our differential bundles also satisfy what Lucyshyn-Wright calls the extra \emph{Basic Condition} \cite[Sec 3.1]{lucyshyn2017geometric}, which is the natural assumption that a differential bundle also has extra existing pullbacks. So a differential bundle $\mathsf{q}: E \to A$ satisfies the \textbf{Basic Condition} if for every $n,m\in \mathbb{N}$, the pullback of $\mathsf{T}_m(A) \to A$ and $E_n \to A$ exists, and for each $k \in \mathbb{N}$, this pullback is preserved by all $\mathsf{T}^k$. In the special case $m=n=1$, we denote this pullback as follows: 
\[ \xymatrixcolsep{5pc}\xymatrix{ \mathsf{T}(A) \times_A E \ar[r]^-{\pi_1} \ar[d]_-{\pi_0} & E \ar[d]^-{\mathsf{p}_A}  \\
\mathsf{T}(A) \ar[r]_-{\mathsf{p}_A} & A  } \]
Moreover, by the assumption that $\mathsf{T}$ preserves this pullback, we may also set that $\mathsf{T}^2(A) \times_{\mathsf{T}(A)} \mathsf{T}(E) \cong \mathsf{T} \left(\mathsf{T}(A) \times_A E  \right)$.

Here is now the unpacked version of the definition of a horizontal connection from \cite[Lemma 4.6]{connections}:

\begin{definition}\label{def:H} A \textbf{horizontal connection} \cite[Def 4.5]{connections} on a differential bundle $\mathsf{q}: E \to A$ which satisfies the Basic Condition is a map $\mathsf{H}: \mathsf{T}(A) \times_A E \to \mathsf{T}(E)$ such that the following diagrams commute: 
 \begin{equation}\label{eq:H}\begin{gathered}
\xymatrixcolsep{1.75pc}\xymatrix{ \mathsf{T}(A) \times_A E \ar@/_/[dr]_-{\pi_0} \ar[r]^-{ \mathsf{H}} & \mathsf{T}(E) \ar[d]^-{\mathsf{T}(\mathsf{q})} \ar@{}[dl]|(0.4){\text{\normalfont \textbf{[H.1]}}} & \mathsf{T}(A) \times_A E \ar@/_/[dr]_-{\pi_1} \ar[r]^-{ \mathsf{H}} & \mathsf{T}(E) \ar[d]^-{\mathsf{p}_E} \ar@{}[dl]|(0.4){\text{\normalfont \textbf{[H.2]}}}  \\
& \mathsf{T}(A) & & E \\
\mathsf{T}(A) \times_A E  \ar[r]^-{ \mathsf{H}} \ar[d]|-{\ell_A \times_{0_A} 0_E} & \mathsf{T}(E)  \ar[dd]^-{\ell_E} & \mathsf{T}(A) \times_A E \ar[d]|-{0_{\mathsf{T}(A)} \times_{0_A} \lambda} \ar[r]^-{ \mathsf{H}} & \mathsf{T}(E)  \ar[d]^-{\mathsf{T}(\lambda)}  \\
\mathsf{T}^2(A) \times_{\mathsf{T}(A)} \mathsf{T}(E) \ar[d]_-{\cong} & \ar@{}[l]|-{\text{\normalfont\textbf{[H.3]} }}  & \mathsf{T}^2(A) \times_{\mathsf{T}(A)} \mathsf{T}(E) \ar[d]_-{\cong} & \mathsf{T}^2(E) \ar[d]^-{\mathsf{c}_E} \ar@{}[l]|-{\text{\normalfont\textbf{[H.4]} }} \\
\mathsf{T} \left(\mathsf{T}(A) \times_A E  \right)  \ar[r]_-{ \mathsf{T}(\mathsf{H})}  & \mathsf{T}^2(E) & \mathsf{T} \left(\mathsf{T}(A) \times_A E  \right) \ar[r]_-{ \mathsf{T}(\mathsf{H})}  & \mathsf{T}^2(E) 
} 
   \end{gathered}\end  {equation}
\end{definition}

The four diagrams of (\ref{eq:H}) precisely say that a horizontal connection is a differential bundle morphism in two different ways. To see this, we note that the pullback along the projection of a differential bundle is again a differential bundle \cite[Lemma2.7]{cockett2018differential}. Therefore, both ${\pi_0: \mathsf{T}(A) \times_A E \to \mathsf{T}(A)}$ and ${\pi_1: \mathsf{T}(A) \times_A E \to E}$ are differential bundles. Then \textbf{[H.1]} and \textbf{[H.3]} say that $(\mathsf{H}, 1_{\mathsf{T}(A)}): \pi_0 \to \mathsf{T}(\mathsf{q})$ is a differential bundle morphism, while \textbf{[H.2]} and \textbf{[H.4]} say that $(\mathsf{H}, 1_E): \pi_1 \to \mathsf{p}_E$ is a differential bundle morphism. Moreover, for any differential bundle $\mathsf{q}: E \to A$ which satisfies the Basic Condition, there is a canonical map ${\mathsf{U}_\mathsf{q}: \mathsf{T}(E) \to \mathsf{T}(A) \times_A E}$ defined as the pairing:
\begin{align}\mathsf{U}_\mathsf{q} \colon = \langle \mathsf{T}(\mathsf{q}), \mathsf{p}_E \rangle
\end{align}
Then \textbf{[H.1]} and \textbf{[H.2]} together say that a horizontal connection is a retract of $\mathsf{U}_\mathsf{q}$, that is, the following diagram commutes: 
\[ \xymatrixcolsep{5pc}\xymatrix{ \mathsf{T}(E) \ar[r]^-{ \mathsf{U}_\mathsf{q} } \ar@/_/@{=}[dr]^-{} &   \mathsf{T}(A) \times_A E \ar@{}[dl]|(0.4){\text{\normalfont \textbf{[H.U]}}} \ar[d]^-{\mathsf{H}}   \\
& \mathsf{T}(E)  }  \]

A connection consists of a vertical connection and a horizontal connection that are compatible with each other in the following sense: 

\begin{definition}\label{def:connection} A \textbf{connection} \cite[Def 5.2]{connections} on a differential bundle $\mathsf{q}: E \to A$ which satisfies the Basic condition is a pair $(\mathsf{K}, \mathsf{H})$ consisting of a vertical connection $\mathsf{K}$ and a horizontal connection $\mathsf{H}$ on $\mathsf{q}$ such that the following equalities hold:
\begin{align}\label{eq:connection}
\mathsf{K} \circ \mathsf{H} = \mathsf{z} \circ \mathsf{q} \circ \pi_1 && \left( \lambda \circ \mathsf{K} +_{\mathsf{T}(\mathsf{q})} 0_E \circ \mathsf{p}_E \right) +_{\mathsf{p}_E} \mathsf{H} \circ \mathsf{U}_\mathsf{q} = 1_{\mathsf{T}(E)}
\end{align}
\end{definition}

Various interesting properties, constructions, and examples of connections can be found in \cite[Sec 5]{connections}. In particular, we can consider connections on tangent bundles. Indeed, note that for every object $A$, its tangent bundle $\mathsf{p}_A: \mathsf{T}(A) \to A$ satisfies the Basic Condition since the pullback of $\mathsf{T}_m(A) \to A$ and $\mathsf{T}_n(A) \to A$ is $\mathsf{T}_{m+n}(A)$.

\begin{definition} An \textbf{affine (vertical/horizontal) connection} \cite[Def 3.2 \& 5.2]{connections} for an object $A$ is a (vertical/horizontal) connection on its tangent bundle ${\mathsf{p}_A: \mathsf{T}(A) \to A}$. 
\end{definition}

Explicitly, an affine vertical connection is of type $\mathsf{K}: \mathsf{T}^2(A) \to \mathsf{T}(A)$, while an affine horizontal connection is of type $\mathsf{H}: \mathsf{T}_2(A) \to \mathsf{T}^2(A)$. 

In the tangent category of smooth manifolds, connections in the tangent category sense correspond to \emph{linear} connections on a smooth vector bundle \cite{macadam2021vector}, which are also called \textbf{Koszul connections} \cite[pg. 227] {spivakVol2}. 

We now review how every connection is completely determined by its vertical connection or its horizontal connection. In fact, for every horizontal connection, there exists a unique vertical connection which together form a connection\footnote{It is important to note that this statement is true in a Rosický tangent category, but not necessarily true in an arbitrary tangent category. However, since in this paper, we only work in a Rosický tangent category, this is not an issue.}. To build a vertical connection from a horizontal connection, we use the bracketing operation. 

\begin{proposition}\label{prop:hor-ver} \cite[Prop 5.12]{connections} Let $\mathsf{H}: \mathsf{T}(A) \times_A E \to \mathsf{T}(E)$ be a horizontal connection on a differential bundle $\mathsf{q}: E \to A$. Define the map ${\mathsf{K}^\flat: \mathsf{T}(E) \to \mathsf{T}(E)}$ as:
\begin{align}
  \mathsf{K}^\flat \colon =  1_{\mathsf{T}(E)} -_{\mathsf{p}_E} \mathsf{H} \circ \mathsf{U}_\mathsf{q}
\end{align}
Then $\mathsf{K}^\flat$ satisfies the bracketing condition, and so define $\mathsf{K}: \mathsf{T}(E) \to E$ as:
\begin{align}
\mathsf{K} = \left \lbrace \mathsf{K}^\flat \right \rbrace
\end{align}
Then ${\mathsf{K}: \mathsf{T}(E) \to E}$ is a vertical connection on $\mathsf{q}$, and it is the unique vertical connection such that $(\mathsf{K}, \mathsf{H})$ is a connection on $\mathsf{q}$. 
\end{proposition}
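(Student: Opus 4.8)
The plan is to verify the statement in three stages: that $\mathsf{K}^\flat$ is well-defined and satisfies the bracketing condition (so $\mathsf{K} = \lbrace \mathsf{K}^\flat \rbrace$ exists); that $\mathsf{K}$ satisfies \textbf{[K.1]}--\textbf{[K.4]}; and that $(\mathsf{K}, \mathsf{H})$ satisfies the two connection equations (\ref{eq:connection}) with $\mathsf{K}$ uniquely determined.

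For the first stage, note that the subtraction defining $\mathsf{K}^\flat$ is legitimate: by \textbf{[H.2]}, $\mathsf{p}_E \circ \mathsf{H} = \pi_1$, so $\mathsf{p}_E \circ \mathsf{H} \circ \mathsf{U}_\mathsf{q} = \pi_1 \circ \langle \mathsf{T}(\mathsf{q}), \mathsf{p}_E \rangle = \mathsf{p}_E$; hence $1_{\mathsf{T}(E)}$ and $\mathsf{H}\circ\mathsf{U}_\mathsf{q}$ lie over the same map to $E$, and moreover $\mathsf{p}_E \circ \mathsf{K}^\flat = \mathsf{p}_E$. To check the bracketing condition, I would use that $(\mathsf{T}(\mathsf{q}), \mathsf{q}) \colon \mathsf{p}_E \to \mathsf{p}_A$ is a differential bundle morphism (preservation of the projection and the lift are just naturality of $\mathsf{p}$ and $\ell$), so it preserves the fibrewise subtraction; combined with \textbf{[H.1]}, which gives $\mathsf{T}(\mathsf{q}) \circ \mathsf{H} \circ \mathsf{U}_\mathsf{q} = \pi_0 \circ \langle \mathsf{T}(\mathsf{q}), \mathsf{p}_E \rangle = \mathsf{T}(\mathsf{q})$, this yields $\mathsf{T}(\mathsf{q}) \circ \mathsf{K}^\flat = \mathsf{T}(\mathsf{q}) -_{\mathsf{p}_A} \mathsf{T}(\mathsf{q}) = 0_A \circ \mathsf{q} \circ \mathsf{p}_E$, which (since $\mathsf{p}_E\circ\mathsf{K}^\flat = \mathsf{p}_E$) is exactly the bracketing condition. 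Unwinding the defining equation of $\lbrace - \rbrace$ together with $\mathsf{p}_E\circ\mathsf{K}^\flat = \mathsf{p}_E$ then records the identity I will use throughout: $\mathsf{K}^\flat = \lambda \circ \mathsf{K} +_{\mathsf{T}(\mathsf{q})} 0_E \circ \mathsf{p}_E$, equivalently $\lambda \circ \mathsf{K} = \mathsf{K}^\flat -_{\mathsf{T}(\mathsf{q})} 0_E \circ \mathsf{p}_E$.

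For the second stage, the general technique is that, since $\lambda$ is monic (this falls out of the uniqueness clause of the bracketing lemma), an equation of the form $\mathsf{K}\circ u = v$ can be checked after composing with $\lambda$, using the formula for $\lambda\circ\mathsf{K}$ above, while equations involving $\mathsf{T}(\mathsf{K})$ are attacked by applying $\mathsf{T}$ (or $\mathsf{c}_E \circ \mathsf{T}(-)$) to the defining equation of $\lbrace\mathsf{K}^\flat\rbrace$. \textbf{[K.2]} is immediate: apply $\mathsf{q}$ to the defining equation and use $\mathsf{p}_E\circ\mathsf{K}^\flat = \mathsf{p}_E$. For \textbf{[K.1]}, I would compute $\mathsf{U}_\mathsf{q}\circ\lambda = \langle 0_A\circ\mathsf{q}, \mathsf{z}\circ\mathsf{q}\rangle$ from $\mathsf{T}(\mathsf{q})\circ\lambda = 0_A\circ\mathsf{q}$ and $\mathsf{p}_E\circ\lambda = \mathsf{z}\circ\mathsf{q}$, recognize it as the zero section of $\pi_1$ composed with $\mathsf{z}$, and use that the differential bundle morphism $(\mathsf{H},1_E)\colon \pi_1 \to \mathsf{p}_E$ (given by \textbf{[H.2]} and \textbf{[H.4]}) preserves zero sections; this gives $\mathsf{H}\circ\mathsf{U}_\mathsf{q}\circ\lambda = 0_E\circ\mathsf{z}\circ\mathsf{q}$, hence $\mathsf{K}^\flat\circ\lambda = \lambda$ and $\mathsf{K}\circ\lambda = 1_E$. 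The substantial work is \textbf{[K.3]} and \textbf{[K.4]}, where one pushes $\mathsf{T}$, respectively $\mathsf{c}_E\circ\mathsf{T}(-)$, through the defining equation of $\lbrace\mathsf{K}^\flat\rbrace$ and invokes \textbf{[H.3]} and \textbf{[H.4]} — which are precisely the compatibilities of $\mathsf{H}$ with $\ell$ and $\mathsf{c}$ — together with the way bracketing interacts with $\mathsf{T}$. I expect this to be the main obstacle, since it requires carefully tracking the isomorphism $\mathsf{T}^2(A)\times_{\mathsf{T}(A)}\mathsf{T}(E) \cong \mathsf{T}(\mathsf{T}(A)\times_A E)$ and how $\ell_E$, $\mathsf{c}_E$ and $\mathsf{T}(\lambda)$ behave across it.

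For the third stage, the second equation of (\ref{eq:connection}) is the identity $\mathsf{K}^\flat = \lambda\circ\mathsf{K} +_{\mathsf{T}(\mathsf{q})} 0_E\circ\mathsf{p}_E$ rearranged against the definition $\mathsf{K}^\flat = 1_{\mathsf{T}(E)} -_{\mathsf{p}_E} \mathsf{H}\circ\mathsf{U}_\mathsf{q}$. The first equation follows from $\mathsf{K}^\flat\circ\mathsf{H} = \mathsf{H} -_{\mathsf{p}_E} \mathsf{H}\circ\mathsf{U}_\mathsf{q}\circ\mathsf{H} = \mathsf{H} -_{\mathsf{p}_E} \mathsf{H} = 0_E\circ\pi_1$, using $\mathsf{U}_\mathsf{q}\circ\mathsf{H} = \langle\pi_0,\pi_1\rangle = 1$ (from \textbf{[H.1]} and \textbf{[H.2]}), and then bracketing $0_E\circ\pi_1$ back to $\mathsf{z}\circ\mathsf{q}\circ\pi_1$ (here one uses the differential bundle identity $\lambda\circ\mathsf{z} = 0_E\circ\mathsf{z}$). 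Finally, for uniqueness, if $(\mathsf{K}',\mathsf{H})$ is any connection, its second equation forces $\lambda\circ\mathsf{K}' +_{\mathsf{T}(\mathsf{q})} 0_E\circ\mathsf{p}_E = 1_{\mathsf{T}(E)} -_{\mathsf{p}_E}\mathsf{H}\circ\mathsf{U}_\mathsf{q} = \lambda\circ\mathsf{K} +_{\mathsf{T}(\mathsf{q})} 0_E\circ\mathsf{p}_E$; cancelling $0_E\circ\mathsf{p}_E$ in the $\mathsf{T}(\mathsf{q})$-bundle and using that $\lambda$ is monic gives $\mathsf{K}' = \mathsf{K}$.
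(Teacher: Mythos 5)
First, a point of comparison: the paper does not prove this proposition at all --- it is recalled verbatim from \cite[Prop 5.12]{connections} --- so your proposal can only be measured against the standard argument in that reference, which it does in fact mirror in structure. The steps you actually carry out are correct: $\mathsf{p}_E \circ \mathsf{H} \circ \mathsf{U}_\mathsf{q} = \pi_1 \circ \langle \mathsf{T}(\mathsf{q}), \mathsf{p}_E\rangle = \mathsf{p}_E$ legitimises the subtraction; the bracketing condition follows from \textbf{[H.1]} together with the fact that $(\mathsf{T}(\mathsf{q}),\mathsf{q})\colon \mathsf{p}_E \to \mathsf{p}_A$ is a differential bundle morphism and hence preserves the fibrewise subtraction; the identification $\mathsf{U}_\mathsf{q}\circ\lambda = \langle 0_A\circ\mathsf{q}, 1_E\rangle\circ\mathsf{z}\circ\mathsf{q}$ and zero-preservation of $(\mathsf{H},1_E)$ give \textbf{[K.1]}; $\mathsf{U}_\mathsf{q}\circ\mathsf{H}=1$ gives the first connection equation; and the uniqueness argument via cancellation and monicity of $\lambda$ is sound (both available here because we are in a Rosick\'y tangent category, so the fibres are groups).

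The genuine gap is that \textbf{[K.3]} and \textbf{[K.4]} --- which you yourself flag as ``the main obstacle'' --- are never verified; you only name the strategy. These two axioms are the actual content of the proposition (compatibility of $\mathsf{K}$ with the vertical lift and the canonical flip), and the bookkeeping is where the difficulty lives: applying $\mathsf{T}$ to the defining equation of $\lbrace\mathsf{K}^\flat\rbrace$ produces a sum in the bundle $\mathsf{T}(\mathsf{p}_E)$ rather than $\mathsf{p}_{\mathsf{T}(E)}$, the lift of the tangent bundle of a differential bundle is $\mathsf{c}_E\circ\mathsf{T}(\lambda)$ rather than $\mathsf{T}(\lambda)$, and one must transport $\ell_E$, $\mathsf{c}_E$ and $\mathsf{T}(\lambda)$ across the isomorphism $\mathsf{T}\left(\mathsf{T}(A)\times_A E\right)\cong\mathsf{T}^2(A)\times_{\mathsf{T}(A)}\mathsf{T}(E)$ before \textbf{[H.3]} and \textbf{[H.4]} can be invoked. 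Nothing in your plan is wrong --- the remaining computations do go through, and they are exactly the proof of \cite[Prop 5.12]{connections} --- but until those two diagram chases are written out, what you have is a correct and well-organised plan rather than a complete proof.
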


On the other hand, as was shown by Lucyshyn-Wright in \cite[Thm 7.2]{lucyshyn2017geometric}, characterizing a vertical connection that belongs to a connection amounts to asking that the tangent bundle of a differential bundle be a biproduct of differential bundles \cite[Sec 4]{lucyshyn2017geometric} of two copies of the differential bundle and the tangent bundle of the base object. This biproduct of differential bundles property can be described in terms of a pullback in the base category. A vertical connection satisfying this property is called \emph{effective}. 

\begin{definition} An \textbf{effective vertical connection} \cite[Thm 6.6]{lucyshyn2017geometric} on a differential bundle $\mathsf{q}: E \to A$ is a vertical connection ${\mathsf{K}: \mathsf{T}(E) \to E}$ on $\mathsf{q}$ such that the following diagram is a pullback: 
 \begin{equation}\label{def:K-pullback}\begin{gathered}
\xymatrixcolsep{5pc}\xymatrix{ & \mathsf{T}(E) \ar[dr]^-{ \mathsf{K} }  \ar[dl]_-{ \mathsf{T}(\mathsf{q}) } \ar[d]_-{ \mathsf{p}_E }    \\
\mathsf{T}(A)  \ar[dr]_-{ \mathsf{p}_A }  & E  \ar[d]_-{ \mathsf{q} }  & E \ar[dl]^-{ \mathsf{q}  } \\
& A  } 
   \end{gathered}\end{equation}
\end{definition}

On the one hand, the vertical connection of any connection is always effective: 

\begin{lemma}\cite[Thm 8.1]{lucyshyn2017geometric} If $(\mathsf{K}, \mathsf{H})$ is a connection on a differential bundle $\mathsf{q}: E \to A$ which satisfies the Basic Condition, then $\mathsf{K}: \mathsf{T}(E) \to E$ is an effective vertical connection on $\mathsf{q}$.  
\end{lemma}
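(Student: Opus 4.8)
This statement is \cite[Thm 8.1]{lucyshyn2017geometric}; the strategy is to exhibit an explicit inverse to the comparison map into the pullback. Asking that diagram \eqref{def:K-pullback} be a pullback is the same as asking that
\[ \langle \mathsf{T}(\mathsf{q}),\, \mathsf{p}_E,\, \mathsf{K} \rangle \colon \mathsf{T}(E) \longrightarrow \mathsf{T}(A) \times_A E \times_A E \]
be an isomorphism, where the codomain is the wide pullback of $\mathsf{p}_A$ against two copies of $\mathsf{q}$ (which exists by the Basic Condition, being $\mathsf{T}_1(A) \times_A E_2$); the equations needed for this to be a legitimate cone are $\mathsf{p}_A \circ \mathsf{T}(\mathsf{q}) = \mathsf{q} \circ \mathsf{p}_E$ (naturality of $\mathsf{p}$) and $\mathsf{q} \circ \mathsf{K} = \mathsf{q} \circ \mathsf{p}_E$ (\textbf{[K.2]} rewritten via naturality of $\mathsf{p}$). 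Writing $\pi_0,\pi_1,\pi_2$ for the three projections, I would take the candidate inverse $\psi \colon \mathsf{T}(A) \times_A E \times_A E \to \mathsf{T}(E)$ to be
\[ \psi \;:=\; \big(\mathsf{H} \circ \langle \pi_0, \pi_1 \rangle \big) \;+_{\mathsf{p}_E}\; \big( \lambda \circ \pi_2 \;+_{\mathsf{T}(\mathsf{q})}\; 0_E \circ \pi_1 \big), \]
i.e. $\psi(a,e,v) = \mathsf{H}(a,e) +_{\mathsf{p}_E} \big(\lambda(v) +_{\mathsf{T}(\mathsf{q})} 0_E(e)\big)$ on generalized elements.

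The first task is to check that the two fibrewise sums in $\psi$ are defined. For the inner sum, both summands become $0_A \circ \mathsf{q} \circ \pi_1$ after composing with $\mathsf{T}(\mathsf{q})$, by the differential bundle identity $\mathsf{T}(\mathsf{q}) \circ \lambda = 0_A \circ \mathsf{q}$, naturality of $0$, and $\mathsf{q}\pi_1 = \mathsf{q}\pi_2$. For the outer sum one needs $\mathsf{p}_E$ of the two sides to agree: $\mathsf{p}_E \circ \mathsf{H} \circ \langle\pi_0,\pi_1\rangle = \pi_1$ by \textbf{[H.2]}, while $\mathsf{p}_E$ applied to the inner sum is $(\mathsf{p}_E\lambda) \circ \pi_2 +_{\mathsf{q}} (\mathsf{p}_E 0_E) \circ \pi_1 = \mathsf{z}\mathsf{q}\pi_2 +_{\mathsf{q}} \pi_1 = \pi_1$, using that $(\mathsf{p}_E,\mathsf{p}_A) \colon \mathsf{T}(\mathsf{q}) \to \mathsf{q}$ is a differential bundle morphism — which itself follows from the coherence $\mathsf{T}(\mathsf{p}_E) \circ \mathsf{c}_E = \mathsf{p}_{\mathsf{T}(E)}$ together with $\mathsf{p}_E \circ \lambda = \mathsf{z} \circ \mathsf{q}$ and $\mathsf{p}_E \circ 0_E = 1_E$.

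Next I would show the two composites are identities. For $\psi \circ \langle \mathsf{T}(\mathsf{q}), \mathsf{p}_E, \mathsf{K}\rangle$: substituting the three legs for $\pi_0,\pi_1,\pi_2$ turns $\psi$ literally into $\mathsf{H}\circ\mathsf{U}_\mathsf{q} +_{\mathsf{p}_E} \big(\lambda\circ\mathsf{K} +_{\mathsf{T}(\mathsf{q})} 0_E\circ\mathsf{p}_E\big)$, which by commutativity of $+_{\mathsf{p}_E}$ is exactly the left-hand side of the second equation of \eqref{eq:connection}, hence $1_{\mathsf{T}(E)}$. For $\langle \mathsf{T}(\mathsf{q}), \mathsf{p}_E, \mathsf{K}\rangle \circ \psi$, I would compute the three legs on $\psi$ one at a time: the $\mathsf{p}_E$-leg gives $\pi_1$ since the outer sum is over $\mathsf{p}_E$ and \textbf{[H.2]} applies; the $\mathsf{T}(\mathsf{q})$-leg gives $\pi_0$, using that $(\mathsf{T}(\mathsf{q}),\mathsf{q})\colon\mathsf{p}_E\to\mathsf{p}_A$ is a differential bundle morphism (naturality of $\ell$) so $\mathsf{T}(\mathsf{q})$ distributes over $+_{\mathsf{p}_E}$, then \textbf{[H.1]} on one summand and $\mathsf{T}(\mathsf{q})\circ\lambda = 0_A\circ\mathsf{q}$ on the other, the latter collapsing as the zero $0_A\circ\mathsf{p}_A$; and the $\mathsf{K}$-leg gives $\pi_2$, using that both $(\mathsf{K},\mathsf{q})\colon\mathsf{p}_E\to\mathsf{q}$ and $(\mathsf{K},\mathsf{p}_E)\colon\mathsf{T}(\mathsf{q})\to\mathsf{q}$ are differential bundle morphisms (the remarks following Definition \ref{def:K}), so that $\mathsf{K}$ distributes over $+_{\mathsf{p}_E}$ and $+_{\mathsf{T}(\mathsf{q})}$ and preserves zeros, whence $\mathsf{K}\circ\mathsf{H} = \mathsf{z}\circ\mathsf{q}\circ\pi_1$ (first equation of \eqref{eq:connection}) kills the horizontal summand, $\mathsf{K}\circ\lambda = 1_E$ (\textbf{[K.1]}) recovers $\pi_2$, and $\mathsf{K}\circ 0_E = \mathsf{z}\circ\mathsf{q}$ kills the last summand.

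The main obstacle is not any individual axiom but the bookkeeping of the three distinct fibrewise additions $+_{\mathsf{q}}$, $+_{\mathsf{p}_E}$, $+_{\mathsf{T}(\mathsf{q})}$: at each step one must identify which base object the sum is taken over and confirm that the map being pushed through it really is a differential bundle morphism over that base, so that it commutes with the addition and preserves zeros. With that discipline in place, the two round-trip computations are essentially forced by the two equations of \eqref{eq:connection}.
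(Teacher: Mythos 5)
The paper does not prove this lemma itself; it simply cites \cite[Thm 8.1]{lucyshyn2017geometric}. Your argument is a correct reconstruction of that result along the standard lines: exhibiting $\mathsf{H}\circ\langle\pi_0,\pi_1\rangle +_{\mathsf{p}_E}(\lambda\circ\pi_2 +_{\mathsf{T}(\mathsf{q})} 0_E\circ\pi_1)$ as a two-sided inverse to $\langle\mathsf{T}(\mathsf{q}),\mathsf{p}_E,\mathsf{K}\rangle$, with one round trip given by the second connection equation and the other by pushing each of $\mathsf{T}(\mathsf{q})$, $\mathsf{p}_E$, $\mathsf{K}$ through the fibrewise sums via the appropriate bundle-morphism structures and the first connection equation. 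The bookkeeping of which sum lives over which base is exactly the delicate point, and you have handled it correctly.
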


On the other hand, for every effective vertical connection, there exists a unique horizontal connection which together form a connection. 

\begin{proposition}\label{prop:ver-hor}\cite[Thm 8.1]{lucyshyn2017geometric} Let $\mathsf{q}: E \to A$ be a differential bundle which satisfies the Basic Condition, and let ${\mathsf{K}: \mathsf{T}(E) \to E}$ be an effective vertical connection on $\mathsf{q}$. Define $\mathsf{H}: \mathsf{T}(A) \times_A E \to \mathsf{T}(E)$ as the unique map which makes the following diagram commute:
 \begin{equation}\label{def:K-pullback-H}\begin{gathered}
\xymatrixcolsep{5pc}\xymatrix{ & \mathsf{T}(A) \times_A E \ar@{-->}[dd]^-{ \mathsf{H} }  \ar[dr]^-{\pi_1 } \ar@/^3pc/[ddd]^-{\pi_1} \ar@/_2pc/[dddl]_-{\pi_0}   \\
& &  E \ar[d]^-{ \mathsf{q} }  \\
& \mathsf{T}(E) \ar[dr]^-{ \mathsf{K} }  \ar[dl]_-{ \mathsf{T}(\mathsf{q}) } \ar[d]_-{ \mathsf{p}_E } & A \ar[d]^-{ \mathsf{z} }    \\
\mathsf{T}(A)  \ar[dr]_-{ \mathsf{p}_A }  & E  \ar[d]_-{ \mathsf{q} }  & E \ar[dl]^-{ \mathsf{q}  } \\
& A   } 
   \end{gathered}\end{equation}
Then $\mathsf{H}: \mathsf{T}(A) \times_A E \to \mathsf{T}(E)$ is a horizontal connection on $\mathsf{q}$ and it is the unique horizontal connection such that $(\mathsf{K}, \mathsf{H})$ is a connection on $\mathsf{q}$. 
\end{proposition}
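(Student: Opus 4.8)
The plan is to verify in turn that $\mathsf{H}$ is well-defined, that it satisfies the horizontal-connection axioms \textbf{[H.1]}--\textbf{[H.4]}, that $(\mathsf{K},\mathsf{H})$ satisfies the two equations of (\ref{eq:connection}), and finally uniqueness. Well-definedness and uniqueness are essentially free. To invoke the universal property defining $\mathsf{H}$ in (\ref{def:K-pullback-H}) one checks that $\pi_0$, $\pi_1$, and $\mathsf{z}\circ\mathsf{q}\circ\pi_1$ form a cone on the diagram (\ref{def:K-pullback}); the only condition beyond the defining square of the pullback $\mathsf{T}(A)\times_A E$ is $\mathsf{q}\circ\mathsf{z}\circ\mathsf{q}\circ\pi_1 = \mathsf{q}\circ\pi_1$, which holds since $\mathsf{q}\circ\mathsf{z} = 1_A$. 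This gives $\mathsf{H}$ together with $\mathsf{T}(\mathsf{q})\circ\mathsf{H} = \pi_0$, $\mathsf{p}_E\circ\mathsf{H} = \pi_1$, and $\mathsf{K}\circ\mathsf{H} = \mathsf{z}\circ\mathsf{q}\circ\pi_1$: the first two are exactly \textbf{[H.1]} and \textbf{[H.2]}, and the third is the left-hand equation of (\ref{eq:connection}). For uniqueness, if $(\mathsf{K},\mathsf{H}^\prime)$ is also a connection then \textbf{[H.1]}, \textbf{[H.2]} and the left equation of (\ref{eq:connection}) force $\mathsf{H}^\prime$ to induce the very same cone, so $\mathsf{H}^\prime = \mathsf{H}$.

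The substance is in \textbf{[H.3]}, \textbf{[H.4]} and the right-hand equation of (\ref{eq:connection}). Here the key tool is that effectiveness says precisely that $\langle\mathsf{T}(\mathsf{q}),\mathsf{p}_E,\mathsf{K}\rangle$ exhibits $\mathsf{T}(E)$ as the limit (\ref{def:K-pullback}); since this limit is a pullback of $\mathsf{T}_1(A)\to A$ and $E_2\to A$ it is preserved by $\mathsf{T}$ by the Basic Condition, so $\langle\mathsf{T}^2(\mathsf{q}),\mathsf{T}(\mathsf{p}_E),\mathsf{T}(\mathsf{K})\rangle$ is jointly monic into $\mathsf{T}^2(E)$. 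Thus any equality of maps into $\mathsf{T}(E)$ (resp.\ $\mathsf{T}^2(E)$) can be checked after post-composing with $\mathsf{T}(\mathsf{q})$, $\mathsf{p}_E$, $\mathsf{K}$ (resp.\ $\mathsf{T}^2(\mathsf{q})$, $\mathsf{T}(\mathsf{p}_E)$, $\mathsf{T}(\mathsf{K})$). For \textbf{[H.3]} and \textbf{[H.4]} one runs the three resulting comparisons, each time pushing $\mathsf{T}(\mathsf{q})$, $\mathsf{p}_E$, or $\mathsf{K}$ through $\mathsf{H}$ using its three defining equations, using naturality of $\ell$ and $\mathsf{c}$ and the coherences of the tangent structure, unwinding the explicit lifts of the pullback bundles $\pi_0,\pi_1$ and of the tangent bundle of the differential bundle $\mathsf{q}$ from \cite{cockett2018differential}, and finally appealing to the vertical-connection axioms --- the $\mathsf{K}$-comparisons are exactly where \textbf{[K.3]} and \textbf{[K.4]} enter. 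For the right-hand equation of (\ref{eq:connection}) the same device applies: post-composing with $\mathsf{T}(\mathsf{q})$ and with $\mathsf{p}_E$ collapses the sums $+_{\mathsf{T}(\mathsf{q})}$ and $+_{\mathsf{p}_E}$ and, after cancelling zero summands coming from the lift axioms $\mathsf{T}(\mathsf{q})\circ\lambda = 0_A\circ\mathsf{q}$ and $\mathsf{p}_E\circ\lambda = \mathsf{z}\circ\mathsf{q}$ together with \textbf{[H.1]} and \textbf{[H.2]}, leaves $\mathsf{T}(\mathsf{q})$ and $\mathsf{p}_E$; post-composing with $\mathsf{K}$ and using that $\mathsf{K}$ is an additive bundle morphism in the two ways recorded after (\ref{def:K}), with \textbf{[K.1]} ($\mathsf{K}\circ\lambda = 1_E$) and with $\mathsf{K}\circ\mathsf{H} = \mathsf{z}\circ\mathsf{q}\circ\pi_1$, leaves $\mathsf{K}$; so the composite is $1_{\mathsf{T}(E)}$.

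I expect \textbf{[H.3]} and \textbf{[H.4]} to be the main obstacle, as these are the only steps requiring one to unwind the isomorphism $\mathsf{T}(\mathsf{T}(A)\times_A E)\cong\mathsf{T}^2(A)\times_{\mathsf{T}(A)}\mathsf{T}(E)$ and track the various occurrences of $0$, $\ell$, $\mathsf{c}$ through it so as to match them against \textbf{[K.3]} and \textbf{[K.4]}; the remaining manipulations are routine. A cleaner route would avoid \textbf{[H.3]}, \textbf{[H.4]} entirely by using Lucyshyn-Wright's equivalent reformulation of effectiveness --- that $\mathsf{T}(E)$, as a differential bundle over $A$, is a biproduct $E\oplus E\oplus\mathsf{T}(A)$ via $\mathsf{p}_E$, $\mathsf{K}$, $\mathsf{T}(\mathsf{q})$ --- under which $\mathsf{H}$ becomes a coprojection and the horizontal-connection axioms are formal consequences of the tangent structure respecting that biproduct.
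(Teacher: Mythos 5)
Your plan is sound, and it is worth saying up front that the paper does not actually prove this proposition: it is imported verbatim as \cite[Thm 8.1]{lucyshyn2017geometric}, so there is no in-paper argument to measure you against. Judged on its own terms, your reconstruction is correct. The decisive observation --- that effectiveness makes $\left\langle \mathsf{T}(\mathsf{q}), \mathsf{p}_E, \mathsf{K}\right\rangle$ a limit cone which, being (isomorphic to) the pullback of $\mathsf{T}_1(A) \to A$ against $E_2 \to A$, is preserved by $\mathsf{T}$ under the Basic Condition --- is exactly what makes existence, uniqueness, and the verification of \textbf{[H.3]}/\textbf{[H.4]} tractable, since every identity can be tested against the jointly monic family $\mathsf{T}(\mathsf{q})$, $\mathsf{p}_E$, $\mathsf{K}$ (or its image under $\mathsf{T}$). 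Your claim that the $\mathsf{K}$-components of \textbf{[H.3]} and \textbf{[H.4]} reduce precisely to \textbf{[K.3]} and \textbf{[K.4]} checks out: for instance, post-composing \textbf{[H.3]} with $\mathsf{T}(\mathsf{K})$ turns the left side into $\lambda \circ \mathsf{K} \circ \mathsf{H} = \lambda \circ \mathsf{z} \circ \mathsf{q} \circ \pi_1$ via \textbf{[K.3]}, while the right side becomes $0_E \circ \mathsf{z} \circ \mathsf{q} \circ \pi_1$ via $\mathsf{T}(\mathsf{q}) \circ \lambda = 0_A \circ \mathsf{q}$ and naturality of $0$, and these agree because the lift preserves zeros; the $\mathsf{T}^2(\mathsf{q})$- and $\mathsf{T}(\mathsf{p}_E)$-components collapse onto \textbf{[H.1]}/\textbf{[H.2]} and tangent-structure coherences. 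Your treatment of the second equation of (\ref{eq:connection}) by the same three post-compositions is likewise correct. The one thing to flag is that the ``cleaner route'' you defer to at the end --- reading effectiveness as the statement that $\mathsf{T}(E)$ is a biproduct of differential bundles $E \oplus E \oplus \mathsf{T}(A)$ --- is in fact essentially how the cited source argues (and is how the paper frames effectiveness in the surrounding discussion), so what you present as an alternative is closer to the official proof than your primary plan; your primary plan is the more elementary, hands-on version of the same idea.
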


Therefore a connection can be completely defined either as an effective vertical connection or as a horizontal connection. 

We conclude this section by discussing the notion of the \textbf{curvature} of a connection and the \textbf{torsion} for an affine connection, where each again corresponds to their namesakes from differential geometry. 

Curvature can, in fact, be defined for any vertical connection: 

\begin{definition}\label{def:curvature-K} The \textbf{curvature} \cite[Def 3.20]{connections} of a vertical connection $\mathsf{K}: \mathsf{T}(E) \to E$ on a differential bundle $\mathsf{q}: E \to A$ is the map $\mathsf{C}_\mathsf{K}: \mathsf{T}^2(E) \to E$ defined as follows: 
\begin{align} \label{eq:curvature-K}
\mathsf{C}_\mathsf{K} \colon = \mathsf{K} \circ \mathsf{T}\left( \mathsf{K} \right) \circ \mathsf{c}_E -_{\mathsf{q}} \mathsf{K} \circ \mathsf{T}\left( \mathsf{K} \right) 
\end{align}
A \textbf{flat} vertical connection \cite[Def 3.20]{connections} on a differential bundle $\mathsf{q}: E \to A$ is a vertical connection ${\mathsf{K}: \mathsf{T}(E) \to E}$ on $\mathsf{q}$ whose curvature is zero, that is: 
\begin{align} 
\mathsf{C}_\mathsf{K} \colon = \mathsf{z} \circ \mathsf{q}_A \circ \mathsf{T}(\mathsf{q}) 
\end{align}
(or, equivalently, $\mathsf{K} \circ \mathsf{T}\left( \mathsf{K} \right) \circ \mathsf{c}_E = \mathsf{K} \circ \mathsf{T}(K)$). Similarly, the \textbf{curvature of a connection} \cite[Def 5.2]{connections} is the curvature of its underlying vertical connection, and a \textbf{flat connection} \cite[Def 5.2]{connections} is a connection whose underlying vertical connection is flat. 
\end{definition}

Similarly, torsion can be defined for any affine connection: 

\begin{definition}\label{def:torsion} The \textbf{torsion} \cite[Def 3.24]{connections} of a vertical connection $\mathsf{K}: \mathsf{T}^2(A) \to \mathsf{T}(A)$ on a tangent bundle $\mathsf{p}_A: \mathsf{T}(A) \to A$ is the map $\mathsf{V}_\mathsf{K}: \mathsf{T}^2(A) \to \mathsf{T}(A)$ defined as follows: 
\begin{align} \label{eq:torsion-K}
\mathsf{V}_\mathsf{K} \colon = \mathsf{K} \circ \mathsf{c}_A -_{\mathsf{p}_A} \mathsf{K} 
\end{align}
A \textbf{torsion-free} vertical connection \cite[Def 3.24]{connections} on a tangent bundle $\mathsf{p}_A: \mathsf{T}(A) \to A$ is a vertical connection $\mathsf{K}: \mathsf{T}^2(A) \to \mathsf{T}(A)$ on $\mathsf{p}_A$ whose torsion is zero that is: 
\begin{align} 
\mathsf{V}_\mathsf{K} \colon = 0_A \circ \mathsf{p}_A \circ \mathsf{p}_{\mathsf{T}(A)} 
\end{align}
Similarly, the \textbf{torsion of an affine connection} \cite[Def 5.2]{connections} is the torsion of its underlying vertical connection, and a \textbf{torsion-free affine connection} \cite[Def 5.2]{connections} is an affine connection whose underlying vertical connection is torsion-free. 
\end{definition}

The torsion of an affine connection can also be expressed using the horizontal connection.  

\begin{proposition}\cite[Prop 5.23]{cockett2018differential} \label{prop:torsion-H} For an affine connection $(\mathsf{K}, \mathsf{H})$ of an object $A$, define the map $\mathsf{V}^\flat_\mathsf{K}: \mathsf{T}^2(A) \to \mathsf{T}^2(A)$ as follows:
\begin{align}
    \mathsf{V}^\flat_\mathsf{K} \colon = \mathsf{c}_A \circ \mathsf{H} \circ \mathsf{U}_{\mathsf{p}_A}  -_{\mathsf{p}_A} \mathsf{H} \circ \mathsf{U}_{\mathsf{p}_A} \circ \mathsf{c}_A 
\end{align}
Then $\mathsf{V}^\flat_\mathsf{K}$ satisfies the bracketing condition and the following equality holds: 
\begin{align}\label{eq:torsion-H}
    \mathsf{V}_\mathsf{K} = \lbrace  \mathsf{V}^\flat_\mathsf{K} \rbrace 
\end{align}
    Moreover, $(\mathsf{K}, \mathsf{H})$ is torsion-free if and only if the following equality holds: 
    \begin{align}\label{eq:torsionfree-H}
      \mathsf{c}_A \circ   \mathsf{H} =  \mathsf{H} \circ \tau_A 
    \end{align}
    where $\tau_A: \mathsf{T}_2(A) \to \mathsf{T}_2(A)$ is the natural isomorphism $\tau_A \colon = \langle \pi_1, \pi_0 \rangle$. 
\end{proposition}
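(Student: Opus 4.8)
The plan is to unwind everything into the differential bundle $\mathsf{q}=\mathsf{p}_A\colon E\to A$ with $E=\mathsf{T}(A)$, so that $\mathsf{T}(A)\times_A E=\mathsf{T}_2(A)$, $\mathsf{H}\colon\mathsf{T}_2(A)\to\mathsf{T}^2(A)$, $\mathsf{K}\colon\mathsf{T}^2(A)\to\mathsf{T}(A)$ and $\mathsf{U}_{\mathsf{p}_A}=\langle\mathsf{T}(\mathsf{p}_A),\mathsf{p}_{\mathsf{T}(A)}\rangle$, and then to reduce the identity $\mathsf{V}_\mathsf{K}=\lbrace\mathsf{V}^\flat_\mathsf{K}\rbrace$ to the defining equation of the bracketing. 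First I would record the structural identities used throughout: $\mathsf{U}_{\mathsf{p}_A}\circ\mathsf{H}=1_{\mathsf{T}_2(A)}$ (immediate from \textbf{[H.1]} and \textbf{[H.2]}, so $\mathsf{H}$ is a section of $\mathsf{U}_{\mathsf{p}_A}$); $\mathsf{U}_{\mathsf{p}_A}\circ\mathsf{c}_A=\tau_A\circ\mathsf{U}_{\mathsf{p}_A}$ (compare components, using $\mathsf{T}(\mathsf{p}_A)\circ\mathsf{c}_A=\mathsf{p}_{\mathsf{T}(A)}$ and $\mathsf{p}_{\mathsf{T}(A)}\circ\mathsf{c}_A=\mathsf{T}(\mathsf{p}_A)$); $\mathsf{c}_A\circ\ell_A=\ell_A$, $\mathsf{c}_A\circ 0_{\mathsf{T}(A)}=\mathsf{T}(0_A)$, $\ell_A\circ 0_A=0_{\mathsf{T}(A)}\circ 0_A$; and the fact that $\mathsf{c}_A$ is an isomorphism of differential bundles interchanging the two bundle structures $\mathsf{p}_{\mathsf{T}(A)}$ and $\mathsf{T}(\mathsf{p}_A)$ on $\mathsf{T}^2(A)$, so that composing by $\mathsf{c}_A$ turns a fibrewise sum for one of these into the fibrewise sum for the other.

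The first substep is to check that $\mathsf{V}^\flat_\mathsf{K}$ satisfies the bracketing condition. Applying $\mathsf{T}(\mathsf{p}_A)$ and $\mathsf{p}_{\mathsf{T}(A)}$ to each of the two summands $\mathsf{c}_A\circ\mathsf{H}\circ\mathsf{U}_{\mathsf{p}_A}$ and $\mathsf{H}\circ\mathsf{U}_{\mathsf{p}_A}\circ\mathsf{c}_A$, and simplifying with \textbf{[H.1]}, \textbf{[H.2]} and the $\mathsf{c}$-axioms, shows that both projections equalize the summands --- so $\mathsf{V}^\flat_\mathsf{K}$ is well defined --- and produces explicit formulas for $\mathsf{T}(\mathsf{p}_A)\circ\mathsf{V}^\flat_\mathsf{K}$ and $\mathsf{p}_{\mathsf{T}(A)}\circ\mathsf{V}^\flat_\mathsf{K}$; substituting these into $\mathsf{T}(\mathsf{p}_A)\circ\mathsf{V}^\flat_\mathsf{K}=0_A\circ\mathsf{p}_A\circ\mathsf{p}_{\mathsf{T}(A)}\circ\mathsf{V}^\flat_\mathsf{K}$ and invoking the naturality square $\mathsf{p}_A\circ\mathsf{T}(\mathsf{p}_A)=\mathsf{p}_A\circ\mathsf{p}_{\mathsf{T}(A)}$ closes this substep quickly.

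For $\mathsf{V}_\mathsf{K}=\lbrace\mathsf{V}^\flat_\mathsf{K}\rbrace$ I would use Proposition~\ref{prop:hor-ver}: $\mathsf{K}=\lbrace\mathsf{K}^\flat\rbrace$ with $\mathsf{K}^\flat=1_{\mathsf{T}(E)}-_{\mathsf{p}_E}\mathsf{H}\circ\mathsf{U}_{\mathsf{p}_A}$, which, using the second equation of (\ref{eq:connection}), can be rewritten as $\mathsf{K}^\flat=\ell_A\circ\mathsf{K}+_{\mathsf{T}(\mathsf{p}_A)}0_{\mathsf{T}(A)}\circ\mathsf{p}_{\mathsf{T}(A)}$. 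Since the bracketing is defined by a universal property it commutes with precomposition, so $\mathsf{K}\circ\mathsf{c}_A=\lbrace\mathsf{K}^\flat\circ\mathsf{c}_A\rbrace$, and by uniqueness of the bracketing it suffices to prove the single equation $\mathsf{V}^\flat_\mathsf{K}=\ell_A\circ\mathsf{V}_\mathsf{K}+_{\mathsf{T}(\mathsf{p}_A)}0_{\mathsf{T}(A)}\circ\mathsf{p}_{\mathsf{T}(A)}\circ\mathsf{V}^\flat_\mathsf{K}$, where $\mathsf{V}_\mathsf{K}=\mathsf{K}\circ\mathsf{c}_A-_{\mathsf{p}_A}\mathsf{K}$. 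To establish it, substitute $\mathsf{H}\circ\mathsf{U}_{\mathsf{p}_A}=1_{\mathsf{T}(E)}-_{\mathsf{p}_E}\mathsf{K}^\flat$ into the definition of $\mathsf{V}^\flat_\mathsf{K}$, push $\mathsf{c}_A$ through the resulting difference (using that $\mathsf{c}_A$ swaps the two bundle structures together with $\mathsf{c}_A\circ\ell_A=\ell_A$ and $\mathsf{c}_A\circ 0_{\mathsf{T}(A)}=\mathsf{T}(0_A)$), expand $\mathsf{K}^\flat$ by its additive formula, and simplify using the fibrewise group laws, additivity of $\ell_A$ over $\mathsf{p}_A$, and naturality of $\mathsf{p}$ and $0$. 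This is the main obstacle: a long but routine calculation whose only real subtlety is that $\mathsf{T}^2(A)$ carries two differential-bundle structures whose fibrewise additions interact only through $\mathsf{c}_A$, so one must track which structure each sum and difference belongs to.

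Finally, for the torsion-free criterion: since $\mathsf{V}_\mathsf{K}=\lbrace\mathsf{V}^\flat_\mathsf{K}\rbrace$ and, by the first substep, $\mathsf{p}_{\mathsf{T}(A)}\circ\mathsf{V}^\flat_\mathsf{K}$ has the expected ``zero-section'' shape, the bracketing-defining equation together with $\ell_A\circ 0_A=0_{\mathsf{T}(A)}\circ 0_A$ and naturality of $0$ shows that $(\mathsf{K},\mathsf{H})$ is torsion-free (i.e.\ $\mathsf{V}_\mathsf{K}=0_A\circ\mathsf{p}_A\circ\mathsf{p}_{\mathsf{T}(A)}$) if and only if $\mathsf{V}^\flat_\mathsf{K}$ is the zero of the differential bundle in which it lives. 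As $\mathsf{V}^\flat_\mathsf{K}$ is a difference of two parallel maps, this is equivalent to $\mathsf{c}_A\circ\mathsf{H}\circ\mathsf{U}_{\mathsf{p}_A}=\mathsf{H}\circ\mathsf{U}_{\mathsf{p}_A}\circ\mathsf{c}_A$; rewriting the right-hand side as $\mathsf{H}\circ\tau_A\circ\mathsf{U}_{\mathsf{p}_A}$ via $\mathsf{U}_{\mathsf{p}_A}\circ\mathsf{c}_A=\tau_A\circ\mathsf{U}_{\mathsf{p}_A}$ and then precomposing with the section $\mathsf{H}$ of $\mathsf{U}_{\mathsf{p}_A}$ yields $\mathsf{c}_A\circ\mathsf{H}=\mathsf{H}\circ\tau_A$, while postcomposing that last equation with $\mathsf{U}_{\mathsf{p}_A}$ gives the reverse implication. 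The only delicate point here is the ``only if'' half of the first equivalence, for which the explicit bracketing-defining equation is genuinely needed rather than just the identity $\mathsf{V}_\mathsf{K}=\lbrace\mathsf{V}^\flat_\mathsf{K}\rbrace$.
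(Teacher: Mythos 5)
This proposition is one the paper does not prove at all --- it is imported by citation from Cockett and Cruttwell's work --- so there is no in-paper argument to compare against; your proposal has to stand on its own, and as a reconstruction of the standard argument it is essentially correct. The structural identities you list all hold ($\mathsf{U}_{\mathsf{p}_A} \circ \mathsf{H} = 1$ from \textbf{[H.1]}--\textbf{[H.2]}, $\mathsf{U}_{\mathsf{p}_A} \circ \mathsf{c}_A = \tau_A \circ \mathsf{U}_{\mathsf{p}_A}$ by comparing components, the reduction of $\mathsf{V}_\mathsf{K} = \lbrace \mathsf{V}^\flat_\mathsf{K}\rbrace$ to the single bracketing-defining equation via Prop \ref{prop:hor-ver} and the compatibility of bracketing with precomposition), and the torsion-free equivalence at the end is handled cleanly. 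Two points deserve to be made explicit. First, both summands of $\mathsf{V}^\flat_\mathsf{K}$ agree under \emph{both} projections $\mathsf{p}_{\mathsf{T}(A)}$ and $\mathsf{T}(\mathsf{p}_A)$, so a priori the subtraction could be formed in either of the two bundle structures on $\mathsf{T}^2(A)$; only the choice $\mathsf{p}_{\mathsf{T}(A)}$ makes the bracketing condition come out (with the other choice, $\mathsf{T}(\mathsf{p}_A) \circ \mathsf{V}^\flat_\mathsf{K}$ equals the common value $\mathsf{p}_{\mathsf{T}(A)}$ rather than a zero-section composite), so you should fix that reading at the outset rather than leave it implicit in your computation of the projections. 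Second, in the central calculation the two fibrewise additions on $\mathsf{T}^2(A)$ do not interact ``only through $\mathsf{c}_A$'': after substituting $\mathsf{H} \circ \mathsf{U}_{\mathsf{p}_A} = 1 -_{\mathsf{p}_{\mathsf{T}(A)}} \mathsf{K}^\flat$ and pushing $\mathsf{c}_A$ through, you are left with nested sums and differences taken in \emph{both} structures, and reassociating them requires the interchange law between $+_{\mathsf{T}(A)}$ and $\mathsf{T}(+_A)$ on $\mathsf{T}^2(A)$ (the commuting-additions property of the double bundle). That law is a standard consequence of the tangent category axioms, but it is the actual engine of the simplification and should be named; with it, your outline closes as described.
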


\section{No Connections in Algebra}

In this section, we will prove that there are no (non-trivial) connections in the tangent category of commutative algebras. 

\subsection{Tangent Category of Commutative Algebras and their Differential Bundles}\label{sec:ALG-tancat}

Let us quickly review the tangent category of commutative algebras and then explain how differential bundles in this tangent category correspond to modules. For more details, we invite the reader to see \cite[Sec 3]{cruttwell2023differential}.

Let $R$ be a commutative ring, and $R\text{-}\mathsf{CALG}$ be the category of commutative $R$-algebras and $R$-algebra morphisms between them. For a commutative $R$-algebra $A$, we denote its algebra of dual numbers as:
 \[A[\epsilon] := \lbrace a + b \epsilon \vert~ a,b \in A, \epsilon^2 = 0\rbrace\] 
 where $a$ and $b \varepsilon$ will be used respectively as shorthands for $a + 0\varepsilon$ and $0 + b \varepsilon$. Then, using the same notation as in \cite[Sec 3]{cruttwell2023differential}, this induces a Rosický tangent structure \rotatebox[origin=c]{180}{$\mathbb{T}$} on $R\text{-}\mathsf{CALG}$ as follows: 
 \begin{enumerate}[(i)]
\item The tangent bundle functor $\rotatebox[origin=c]{180}{$\mathsf{T}$}: R\text{-}\mathsf{CALG} \to R\text{-}\mathsf{CALG}$ is defined on objects as $\rotatebox[origin=c]{180}{$\mathsf{T}$}(A) := A[\epsilon]$ and sends an algebra morphism $f: A \to B$ to the algebra morphism ${\rotatebox[origin=c]{180}{$\mathsf{T}$}(f): A[\epsilon] \to B[\epsilon]}$  defined as:
 \[\rotatebox[origin=c]{180}{$\mathsf{T}$}(f)(a + b\epsilon) = f(a) + f(b) \epsilon\] 
\item The projection $\mathsf{p}_{A}: A[\varepsilon] \to A$ is defined as: 
\[\mathsf{p}_{A}(a+b \varepsilon) = a \]
where the pullbacks are described by the multi-variable dual numbers:
\[\rotatebox[origin=c]{180}{$\mathsf{T}$}(A) \!=\! A[\varepsilon_1, \hdots, \varepsilon_n] = \lbrace a + b_1 \varepsilon_1 + \hdots + b_n \varepsilon_n \vert~ \forall a,b_i \in A \text{ and } \varepsilon_i \varepsilon_j = 0 \rbrace \]
\item The sum $+_{A}: A[\varepsilon_1, \varepsilon_2] \to A[\varepsilon]$ is defined as: 
\[ +_A( a + b\varepsilon_1 + c \varepsilon_2) = a + (b+c) \varepsilon \]
\item The zero $0_{A}:  A \to A[\varepsilon]$ is defined as: 
\[0_A(a) = a \]
\item The negative $-_{A}: A[\varepsilon] \to A[\varepsilon]$ is defined as: 
\[-_{A}(a + b \varepsilon) = a - b \varepsilon\]
\end{enumerate}
To describe the vertical lift and the canonical flip, we let $\rotatebox[origin=c]{180}{$\mathsf{T}$}\rotatebox[origin=c]{180}{$\mathsf{T}$}(A)$ denote the ring of dual numbers of the ring of dual numbers:
\begin{align*}
 \rotatebox[origin=c]{180}{$\mathsf{T}$}\rotatebox[origin=c]{180}{$\mathsf{T}$}(A) \!=\! A[\varepsilon][\varepsilon^\prime] = \lbrace a + b \varepsilon + c \varepsilon^\prime + d \varepsilon \varepsilon^\prime \vert~ \forall a,b,c,d \in A \text{ and } \varepsilon^2 = {\varepsilon^\prime}^2 = 0 \rbrace
\end{align*}
\begin{enumerate}[{\em (i)}]
\setcounter{enumi}{6}
\item The vertical lift $\ell_{A}: A[\varepsilon] \to A[\varepsilon][\varepsilon^\prime]$ is defined as: 
\[\ell_{A}(a + b \varepsilon) = a + b \varepsilon\varepsilon^\prime\]
\item The canonical flip $\mathsf{c}_{A}: A[\varepsilon][\varepsilon^\prime] \to A[\varepsilon][\varepsilon^\prime]$ is defined as:  
\[\mathsf{c}_{A}( a + b \varepsilon + c \varepsilon^\prime + d \varepsilon \varepsilon^\prime ) =  a + c \varepsilon + b \varepsilon^\prime + d \varepsilon \varepsilon^\prime\]
\end{enumerate}
So $(R\text{-}\mathsf{CALG}, \rotatebox[origin=c]{180}{$\mathbb{T}$})$ is a Rosický tangent category \cite[Lemma 3.2]{cruttwell2023differential}. 

The differential bundles in $(\mathsf{CRING}, \rotatebox[origin=c]{180}{$\mathbb{T}$})$ correspond precisely to modules \cite[Thm 3.14]{cruttwell2023differential}. Indeed, for a commutative $R$-algebra $A$ and an $A$-module $M$, define the commutative $R$-algebra $M[\varepsilon]$ as follows:
\[ M[\varepsilon] = \lbrace a + m \varepsilon \vert~ a \in A, m \in M \text{ and } \varepsilon^2 =0 \rbrace \] 
where $a$ and $m \varepsilon$ will be used respectively as shorthand for $a + 0\varepsilon$ and $0 + m \varepsilon$. Then $M[\varepsilon]$ is a differential bundle where the projection $\mathsf{q}_{M}: M[\varepsilon] \to A$ is defined as:
\[\mathsf{q}_{M}(a+m \varepsilon) = a. \]
We won't need the precise form of the sum or zero; however, the lift will be important. To describe the lift, we denote $\rotatebox[origin=c]{180}{$\mathsf{T}$}\left(M[\varepsilon]  \right)$, the ring of dual numbers of $M[\varepsilon]$, as follows: 
\begin{gather*}
\rotatebox[origin=c]{180}{$\mathsf{T}$}\left(M[\varepsilon]  \right) \!=\! M[\varepsilon][\varepsilon^\prime] = \lbrace a + m \varepsilon + b \varepsilon^\prime + n \varepsilon \varepsilon^\prime \vert~ \forall a,b \in A, m,n \in M \text{ and } \varepsilon^2 = {\varepsilon^\prime}^2 = 0 \rbrace
\end{gather*}
Then the lift $\lambda_M: M[\varepsilon] \to M[\varepsilon][\varepsilon^\prime]$ is defined as: 
\[\lambda(a + m \varepsilon) = a + m \varepsilon\varepsilon^\prime\]

Conversely, given a differential bundle $\mathsf{q}: E \to A$ in $(R\text{-}\mathsf{CALG}, \rotatebox[origin=c]{180}{$\mathbb{T}$})$, its associated $A$-module is the kernel of the projection $\mathsf{ker}(\mathsf{q})= \lbrace x \vert~ \mathsf{q}(x) = 0 \rbrace$ \cite[Lemma 3.7]{cruttwell2023differential}. Moreover, these constructions are inverses of each other \cite[Sec 3.8]{cruttwell2023differential}, so differential bundles over $A$ in $(R\text{-}\mathsf{CALG}, \rotatebox[origin=c]{180}{$\mathbb{T}$})$ correspond to modules over $A$. To see this as an equivalence of categories, let $\mathsf{MOD}$ be the category whose objects are pairs $(A,M)$ consisting of a commutative $R$-algebra $A$ and an $A$-module $M$, and whose maps $(f,g): (A,M) \to (B,N)$ are pairs consisting of an $R$-algebra morphism $f: A \to B$ and an $A$-module morphism $g: M \to N$ in the sense that $g(am) = f(a) g(m)$ for all $a \in A$ and $m \in M$. Then there is an equivalence $\mathsf{DBUN}\left (R\text{-}\mathsf{CALG}, \rotatebox[origin=c]{180}{$\mathbb{T}$}\right)\simeq \mathsf{MOD}$ \cite[Thm 3.14]{cruttwell2023differential}.  

\subsection{No Connections}\label{sec:noconnections}

We now show that there are no non-trivial connections in $(R\text{-}\mathsf{CALG}, \rotatebox[origin=c]{180}{$\mathbb{T}$})$ in the sense that the only differential bundles that have a connection are the \emph{trivial bundles}. In particular, we prove that if a module has a vertical connection on its associated differential bundle, then said module must be the zero module. For the remainder of this section, we fix a commutative $R$-algebra $A$ and an $A$-module $M$.

\begin{proposition}\label{prop:no_conn_in_algebra} $\mathsf{q}_{M}: M[\varepsilon] \to A$ has a vertical connection if and only if $M = \lbrace 0 \rbrace$. 
\end{proposition}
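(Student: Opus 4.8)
The plan is to analyze what a vertical connection $\mathsf{K} \colon \rotatebox[origin=c]{180}{$\mathsf{T}$}(M[\varepsilon]) \to M[\varepsilon]$ must look like as a $k$-algebra morphism, and then show the axioms \textbf{[K.1]}--\textbf{[K.4]} force the underlying module to vanish. Recall $\rotatebox[origin=c]{180}{$\mathsf{T}$}(M[\varepsilon]) = M[\varepsilon][\varepsilon']$, with general element $a + m\varepsilon + b\varepsilon' + n\varepsilon\varepsilon'$ for $a, b \in A$ and $m, n \in M$. Since $\mathsf{K}$ is a $k$-algebra morphism landing in $M[\varepsilon] = \{a' + m'\varepsilon\}$, it is determined by where it sends the four ``coordinate'' pieces; I would write $\mathsf{K}(a + m\varepsilon + b\varepsilon' + n\varepsilon\varepsilon')$ in terms of the images of $a$, $m\varepsilon$, $b\varepsilon'$, $n\varepsilon\varepsilon'$, using multiplicativity and the nilpotency relations $\varepsilon^2 = {\varepsilon'}^2 = 0$ to cut down the possibilities drastically (e.g. the image of $m\varepsilon$ squares to zero, so it has no constant term, etc.).

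First I would pin down $\mathsf{K}$ on the image of the zero section and the projection: axiom \textbf{[K.2]} ($\mathsf{q} \circ \mathsf{K} = \mathsf{p}_A \circ \rotatebox[origin=c]{180}{$\mathsf{T}$}(\mathsf{q})$, using naturality of $\mathsf{p}$) says the $A$-component of $\mathsf{K}$ is just $a \mapsto a$, so $\mathsf{K}(a + m\varepsilon + b\varepsilon' + n\varepsilon\varepsilon') = a + (\text{something in } M)\varepsilon$. Next, \textbf{[K.1]} says $\mathsf{K} \circ \lambda = 1_{M[\varepsilon]}$, and since $\lambda(a + m\varepsilon) = a + m\varepsilon\varepsilon'$, this forces $\mathsf{K}$ to send $n\varepsilon\varepsilon' \mapsto n\varepsilon$. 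So the only remaining freedom is in how $\mathsf{K}$ treats the $\varepsilon$ and $\varepsilon'$ terms: there are $A$-linear (or $k$-linear) maps determining the $\varepsilon$-component of $\mathsf{K}(m\varepsilon)$ and $\mathsf{K}(b\varepsilon')$. Being a $k$-algebra morphism, the cross-term $\mathsf{K}(m\varepsilon \cdot b\varepsilon') = \mathsf{K}(mb \, \varepsilon\varepsilon') = mb\,\varepsilon$ must equal $\mathsf{K}(m\varepsilon)\cdot\mathsf{K}(b\varepsilon')$; computing this product in $M[\varepsilon]$ (where $M \cdot M$-type terms die because $\varepsilon^2 = 0$) will already impose a strong constraint relating the two unknown maps.

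Then I would feed in the two ``linearity'' axioms \textbf{[K.3]} ($\mathsf{K}$ commutes with $\ell$) and \textbf{[K.4]} ($\mathsf{K}$ intertwined with $\mathsf{T}(\lambda)$, $\mathsf{c}$, $\lambda$). These involve $\rotatebox[origin=c]{180}{$\mathsf{T}$}^2(M[\varepsilon]) = M[\varepsilon][\varepsilon'][\varepsilon'']$ and the explicit formulas for $\ell$, $\mathsf{c}$, $\lambda$ given in Section \ref{sec:ALG-tancat}. Tracking a general element through both sides of \textbf{[K.3]} and comparing coefficients should force the $\varepsilon$-component of $\mathsf{K}(b\varepsilon')$ to be zero (roughly: the vertical-lift axiom says the ``$\varepsilon'$ direction'' is purely vertical and $\mathsf{K}$ must collapse it in a way incompatible with a nonzero contribution), after which the multiplicativity constraint from the previous paragraph collapses to $mb\,\varepsilon = 0$ for all $m \in M$, $b \in A$; taking $b = 1$ gives $M = \{0\}$. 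Conversely, if $M = \{0\}$ then $M[\varepsilon] = A$, $\mathsf{q}_M = 1_A$, and $\mathsf{K} = \mathsf{p}_A$ is trivially a vertical connection, so the ``if'' direction is immediate.

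The main obstacle I anticipate is the bookkeeping in \textbf{[K.4]}: it lives two tangent-levels up, in $M[\varepsilon][\varepsilon'][\varepsilon'']$, and the canonical flip swaps $\varepsilon$ and $\varepsilon'$ while $\rotatebox[origin=c]{180}{$\mathsf{T}$}(\lambda)$ and $\lambda$ introduce their own prime shifts, so it is easy to misidentify which nilpotent generator plays which role. The cleanest route is probably to first extract everything one can from \textbf{[K.1]}, \textbf{[K.2]}, and multiplicativity alone — this already reduces $\mathsf{K}$ to one or two unknown module maps — and only then invoke \textbf{[K.3]} (the simplest of the linearity axioms, since $\ell$ is just $a + b\varepsilon \mapsto a + b\varepsilon\varepsilon'$) to kill the last degree of freedom, keeping \textbf{[K.4]} in reserve as a consistency check rather than the workhorse.
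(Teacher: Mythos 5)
Your plan is correct and follows essentially the same route as the paper: treat $\mathsf{K}$ as a $k$-algebra morphism $M[\varepsilon][\varepsilon'] \to M[\varepsilon]$, use \textbf{[K.1]} to get $\mathsf{K}(m\varepsilon\varepsilon') = m\varepsilon$, and then exploit multiplicativity of $\mathsf{K}$ on the factorization $m\varepsilon\varepsilon' = (m\varepsilon)(\varepsilon')$ together with $\varepsilon^2 = 0$ to conclude $m\varepsilon = 0$. The one substantive difference is which axiom you use to show that $\mathsf{K}(m\varepsilon)$ and $\mathsf{K}(b\varepsilon')$ lie in the square-zero ideal $M\varepsilon$: you get this from \textbf{[K.2]} (the $A$-component of $\mathsf{K}(x)$ is the constant term of $x$), whereas the paper extracts it from a \textbf{[K.3]} computation one tangent level up; your route is slightly more economical. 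One caveat: your third paragraph predicts that \textbf{[K.3]} will force the $\varepsilon$-component of $\mathsf{K}(b\varepsilon')$ to vanish and that this is what collapses the multiplicativity constraint. That prediction is off --- nothing forces $\mathsf{K}(\varepsilon') = 0$; it can be an arbitrary $n'\varepsilon$ --- but it is also unnecessary: once both $\mathsf{K}(m\varepsilon)$ and $\mathsf{K}(b\varepsilon')$ are known to have zero constant term, their product already vanishes because $\varepsilon^2 = 0$, so $mb\,\varepsilon = 0$, and taking $b = 1$ finishes the argument with no appeal to \textbf{[K.3]} or \textbf{[K.4]} at all.
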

\begin{proof} For the $\Leftarrow$ direction, we first note that in an arbitrary Rosický tangent category, for every object $A$, trivially $(A, A, 1_A, 1_A, 1_A, 0_A, 1_A)$ is a differential bundle \cite[Ex 2.4.(i)]{cockett2018differential}, which is called the \emph{trivial bundle} over $A$. Moreover, the projection $\mathsf{p}_A: \mathsf{T}(A) \to A$ is a vertical connection on $1_A$. In $(R\text{-}\mathsf{CALG}, \rotatebox[origin=c]{180}{$\mathbb{T}$})$, the trivial bundle over a commutative $R$-algebra $A$ is the one associated to the zero $A$-module $\mathsf{0}$, since $\mathsf{ker}(1_A) = \lbrace 0 \rbrace$ and so $\mathsf{0}[\varepsilon] \cong A$. Thus $\mathsf{q}_{\mathsf{0}}: \mathsf{0}[\varepsilon] \to A$ has a vertical connection. 

For the $\Rightarrow$ direction, we need to prove that for all $m \in M$, we have that $m=0$. So, suppose that we have a vertical connection on ${\mathsf{q}_{M}: M[\varepsilon] \to A}$, that is, an $R$-algebra morphism $\mathsf{K}: M[\varepsilon][\varepsilon^\prime] \to M[\varepsilon]$ satisfying the four diagrams in (\ref{eq:K}). Now \textbf{[K.1]} tells us that: 
\[ a + m \varepsilon =  \mathsf{K}\left( \lambda_M(a+m\varepsilon) \right) = \mathsf{K}\left(a + m\varepsilon\varepsilon^\prime  \right)  \]
So when setting $m=0$, we get that $\mathsf{K}(a) = a$, while setting $a=0$, gives us that $\mathsf{K}( m\varepsilon\varepsilon^\prime ) = m \varepsilon$. Next observe that since $\mathsf{K}$ is an $R$-algebra morphism, we get $m \varepsilon = \mathsf{K}( m\varepsilon) \mathsf{K}(\varepsilon^\prime)$. Now denote $\mathsf{K}( m\varepsilon) = b + n\varepsilon$ and $\mathsf{K}(\varepsilon^\prime) = c + n^\prime \varepsilon$. Denoting $\rotatebox[origin=c]{180}{$\mathsf{T}$}\rotatebox[origin=c]{180}{$\mathsf{T}$}\left(M[\varepsilon]  \right) = M[\varepsilon][\varepsilon^\prime][\varepsilon^{\prime\prime}]$, \textbf{[K.3]} then tells us that: 
\begin{gather*}
  c + n^\prime \varepsilon \varepsilon^\prime = \lambda_M( c + n^\prime \varepsilon ) = \lambda_M\left( \mathsf{K}\left( \varepsilon^\prime \right) \right) = \rotatebox[origin=c]{180}{$\mathsf{T}$}\left( \mathsf{K} \right)(\ell_{M[\varepsilon]}(\varepsilon^\prime) ) \\
  = \rotatebox[origin=c]{180}{$\mathsf{T}$}\left( \mathsf{K} \right) \left( \varepsilon^\prime\varepsilon^{\prime\prime} \right) = \mathsf{K}(\varepsilon^\prime) \varepsilon^\prime = (b^\prime + n^\prime \varepsilon) \varepsilon^\prime = b^\prime \varepsilon^\prime + n^\prime \varepsilon \varepsilon^\prime   
\end{gather*}
Thus the above calculation tells us that $b^\prime=0$, and so $\mathsf{K}(\varepsilon^\prime) = n^\prime \varepsilon$. Similarly, we compute that: 
\begin{gather*}
     b + n\varepsilon \varepsilon^\prime = \lambda_M( b + n\varepsilon ) = \lambda_M\left( \mathsf{K}\left( m \varepsilon \right) \right) = \rotatebox[origin=c]{180}{$\mathsf{T}$}\left( \mathsf{K} \right)(\ell_{M[\varepsilon]}(m \varepsilon) ) \\
     = \rotatebox[origin=c]{180}{$\mathsf{T}$}\left( \mathsf{K} \right) \left( m\varepsilon\varepsilon^{\prime\prime} \right) = \mathsf{K}(m\varepsilon) \varepsilon^\prime = (b + n\varepsilon) \varepsilon^\prime = b \varepsilon^\prime + n \varepsilon \varepsilon^\prime 
\end{gather*}
So the above calculation tells us that $b=0$, and so $\mathsf{K}(m\varepsilon) = n \varepsilon$. However, $\mathsf{K}(m\varepsilon) = n \varepsilon$ and $\mathsf{K}(\varepsilon^\prime) = n^\prime \varepsilon$ tells us that $\mathsf{K}( m\varepsilon) \mathsf{K}(\varepsilon^\prime) =0$. Therefore, we get that $m \varepsilon = \mathsf{K}( m\varepsilon) \mathsf{K}(\varepsilon^\prime) =0$, so $m \varepsilon =0$. Therefore, $m=0$. So we conclude that $M= \lbrace 0 \rbrace$. 
\end{proof}

The same is true for horizontal connections, since recall that every horizontal connection always has an associated vertical connection, which together form a connection (Prop \ref{prop:hor-ver}). So if the associated differential bundle over a module has a horizontal connection, then said differential bundle must also have a vertical connection, and therefore, by the above result, the starting module must be trivial. So we can conclude that:

\begin{corollary} $\mathsf{q}_{M}: M[\varepsilon] \to A$ has a (horizontal/vertical) connection if and only if $M = \lbrace 0 \rbrace$. 
\end{corollary}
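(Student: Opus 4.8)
The substance is Proposition~\ref{prop:no_conn_in_algebra} --- whose proof extracts from \textbf{[K.1]} the factorization $m\varepsilon = \mathsf{K}(m\varepsilon\varepsilon') = \mathsf{K}(m\varepsilon)\,\mathsf{K}(\varepsilon')$, notes via \textbf{[K.2]} that both factors lie in the square-zero ideal $\ker(\mathsf{q}_M)=M\varepsilon$, and concludes $m\varepsilon\in (M\varepsilon)^2=0$ --- and my plan is to reduce the horizontal and full cases to it. For the ``only if'' direction: a full connection $(\mathsf{K},\mathsf{H})$ on $\mathsf{q}_M$ in particular has a vertical connection component $\mathsf{K}$, so Proposition~\ref{prop:no_conn_in_algebra} forces $M=\{0\}$; and a horizontal connection $\mathsf{H}$ on $\mathsf{q}_M$ presupposes the Basic Condition (hence $\mathsf{U}_{\mathsf{q}_M}$ is defined) and produces, by Proposition~\ref{prop:hor-ver}, a vertical connection on $\mathsf{q}_M$, again forcing $M=\{0\}$.

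For the ``if'' direction, assume $M=\{0\}$; then $M[\varepsilon]\cong A$ and $\mathsf{q}_M$ is, up to isomorphism of differential bundles, the trivial bundle $1_A$ over $A$, so it suffices to treat $1_A$. The projection $\mathsf{p}_A\colon\mathsf{T}(A)\to A$ is a vertical connection on $1_A$ (as recalled in the proof of Proposition~\ref{prop:no_conn_in_algebra}), giving the vertical case. For the remaining cases I would first observe that $1_A$ satisfies the Basic Condition --- the pullback of the projection $\mathsf{T}_m(A)\to A$ along the identity $1_A$ is $\mathsf{T}_m(A)$ itself, and pullbacks along an identity are preserved by every functor, hence by each $\mathsf{T}^k$ --- and then that $\mathsf{p}_A$ is an \emph{effective} vertical connection on $1_A$: substituting $E=A$, $\mathsf{q}=1_A$, $\mathsf{K}=\mathsf{p}_A$ (so that $\mathsf{T}(\mathsf{q})=1_{\mathsf{T}(A)}$ and $\mathsf{p}_E=\mathsf{p}_A$) into diagram~(\ref{def:K-pullback}) yields precisely the wide pullback of $\mathsf{p}_A,1_A,1_A$ over $A$. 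Proposition~\ref{prop:ver-hor} then supplies a horizontal connection $\mathsf{H}$ with $(\mathsf{p}_A,\mathsf{H})$ a connection on $1_A$, covering the horizontal and full cases.

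I do not anticipate a real obstacle: the whole argument is bookkeeping on top of Proposition~\ref{prop:no_conn_in_algebra} and the standard passage between vertical and horizontal connections (Propositions~\ref{prop:hor-ver} and~\ref{prop:ver-hor}). The only point that genuinely needs checking is effectiveness of $\mathsf{p}_A$ on the trivial bundle --- or, equivalently and perhaps more transparently, exhibiting a horizontal connection on $1_A$ by hand, e.g.\ $\mathsf{H}=1_{\mathsf{T}(A)}$ under the identification $\mathsf{T}(A)\times_A A\cong\mathsf{T}(A)$, and verifying \textbf{[H.1]}--\textbf{[H.4]} together with the compatibility~(\ref{eq:connection}).
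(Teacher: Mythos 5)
Your proposal is correct and follows essentially the same route as the paper: the ``only if'' direction reduces the horizontal and full cases to Proposition~\ref{prop:no_conn_in_algebra} via Proposition~\ref{prop:hor-ver}, exactly as the paper does. Your ``if'' direction is slightly more thorough than the paper's (which only exhibits the vertical connection $\mathsf{p}_A$ on the trivial bundle and leaves the horizontal and full cases implicit); your verification that the trivial bundle satisfies the Basic Condition and that $\mathsf{p}_A$ is effective, so that Proposition~\ref{prop:ver-hor} supplies the horizontal connection, is a correct and welcome completion of that step.
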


\section{Connections in Algebraic Geometry}

In this section, we show that in the tangent category of affine schemes, the tangent category version of connection corresponds precisely to the algebraic geometry version of connection and that the notions of curvature are essentially the same as well.   

\subsection{Tangent Category of Affine Schemes and their Differential Bundles}\label{sec:AFF-tangcat}

Let us quickly review the tangent category of affine schemes and explain how differential bundles in this tangent category also correspond to modules (though contravariantly in this case). For more details, we invite the reader to see \cite[Sec 4]{cruttwell2023differential}. 

By the category of affine schemes, we mean the opposite category of commutative algebras. So, for a commutative ring $R$, we associate the category of affine schemes over $R$ with $R\text{-}\mathsf{CALG}^{op}$. To describe the Rosický tangent structure on $R\text{-}\mathsf{CALG}^{op}$, we describe it in terms of a ``co-Rosický tangent structure'' on $R\text{-}\mathsf{CALG}^{op}$, which consists of a functor $\mathsf{T}: R\text{-}\mathsf{CALG} \to R\text{-}\mathsf{CALG}$ equipped with natural transformations of dual type from those in the Rosický tangent structure definition. 

Starting with the tangent bundle, for a commutative $R$-algebra $A$, we denote its module of Kähler differentials (over $R$) as $\Omega(A)$. Then define $\mathsf{T}(A)$ as the free symmetric $A$-algebra over $\Omega(A)$: 
\[ \mathsf{T}(A) := \mathsf{S}_A \left( \Omega(A) \right) = A \oplus \Omega(A) \oplus \left(\Omega(A) \otimes^s_A \Omega(A) \right) \oplus \hdots  \]
where $\otimes^s_A$ is the symmetrized tensor product over $A$. In \cite[Def 16.5.12.I]{grothendieck1966elements}, Grothendieck calls $\mathsf{T}(A)$ the ``fibré tangente'' (French for tangent bundle) of $A$, while in \cite[Sec 2.6]{jubin2014tangent}, Jubin calls $\mathsf{T}(A)$ the tangent algebra of $A$. Examples of specific tangent algebras can be found in \cite[Ex 4.2]{cruttwell2023differential}. 

However, it will also be useful to have a more explicit description of $\mathsf{T}(A)$.  $\mathsf{T}(A)$ can be defined as the free $A$-algebra generated by the set $\lbrace \mathsf{d}(a) \vert~ a \in A \rbrace$ modulo the equations:
\begin{align*}
  \mathsf{d}(1) = 0 && \mathsf{d}(a+b) = \mathsf{d}(a) + \mathsf{d}(b) && \mathsf{d}(ab) = a \mathsf{d}(b) + b \mathsf{d}(a)
\end{align*}
which are the same equations that are used to construct the module of Kähler differentials of $A$. So an arbitrary element of $\mathsf{T}(A)$ is a finite sum of monomials of the form $a \mathsf{d}(b_1) \hdots \mathsf{d}(b_n)$. Thus the algebra structure $\mathsf{T}(A)$ amounts to essentially the same as that of polynomial rings. Since $\mathsf{T}(A)$ is generated by $a$ and $\mathsf{d}(a)$, for all $a \in A$, to define an $R$-algebra morphism with domain $\mathsf{T}(A)$, it suffices to define it on generators $a$ and $\mathsf{d}(a)$.  With this in mind, we can describe the tangent structure $\mathbb{T}$ on $R\text{-}\mathsf{CALG}^{op}$ from the point of view of $R\text{-}\mathsf{CALG}$. 

\begin{enumerate}[{\em (i)}]
\item The tangent bundle functor $\mathsf{T}: R\text{-}\mathsf{CALG} \to R\text{-}\mathsf{CALG}$ sends an object $A$ to its tangent algebra $\mathsf{T}(A)$, and for an $R$-algebra morphism $f: A \to B$, $\mathsf{T}(f): \mathsf{T}(A) \to \mathsf{T}(B)$ is defined as on generators as follows: 
\begin{align*}
    \mathsf{T}(f)(a) = f(a) && \mathsf{T}(f)(\mathsf{d}(a)) = \mathsf{d}(f(a))
\end{align*}
\item The projection $\mathsf{p}_{A}: A \to \mathsf{T}(A)$ is defined as: 
\[\mathsf{p}_{A}(a) = a\]
The pushout (so the pullback in $R\text{-}\mathsf{CALG}^{op}$) is given by tensoring $n$ copies of $\mathsf{T}(A)$ over $A$: 
\[ \mathsf{T}_n(A) = \mathsf{T}(A)^{\otimes_A^n} = \mathsf{T}(A) \otimes_A \hdots \otimes_A \mathsf{T}(A) \]
where $\otimes_A$ is the tensor product over $A$ of $A$-modules.
\item The sum $+_{A}: \mathsf{T}(A) \to \mathsf{T}(A) \otimes_A \mathsf{T}(A)$ is defined on generators as:
\begin{align*}
    +_A(a) = a \otimes_A 1 = 1 \otimes_A a && +_A(\mathsf{d}(a)) = \mathsf{d}(a) \otimes_A 1 + 1 \otimes_A \mathsf{d}(a)
\end{align*}
\item The zero $0_{A}: \mathsf{T}(A) \to A$ is defined on generators as: 
\begin{align*}
    0_A(a) = a && 0_A(\mathsf{d}(a)) = 0
\end{align*}
\item The negative $-_{A}: \mathsf{T}(A) \to \mathsf{T}(A)$ is defined on generators as:
\begin{align*}
-_A(a) = a && -_A(\mathsf{d}(a))= - \mathsf{d}(a)
\end{align*}
\end{enumerate}
To describe the vertical lift and the canonical flip, we describe $\mathsf{T}^2(A)$ as the free $A$-algebra generated by the set:
\[\lbrace \mathsf{d}(a) \vert ~ a \in A \rbrace \cup \lbrace \mathsf{d}^\prime(a) \vert ~ a \in A \rbrace \cup \lbrace \mathsf{d}^\prime\mathsf{d}(a) \vert ~ a \in A \rbrace\]
modulo the appropriate relations. 
\begin{enumerate}[{\em (i)}]
\setcounter{enumi}{6}
\item The vertical lift $\ell_{A}: \mathsf{T}^2(A) \to \mathsf{T}(A)$ is defined on generators as: 
\begin{gather*}
   \ell_A(a) =a \qquad  \ell_A(\mathsf{d}(a)) =0 \qquad  \ell_A(\mathsf{d}^\prime(a)) =0 \qquad \ell_A( \mathsf{d}^\prime\mathsf{d}(a)) = \mathsf{d}(a)
\end{gather*}
\item The canonical flip $\mathsf{c}_{A}: \mathsf{T}^2(R) \to \mathsf{T}^2(A)$ is defined on generators as: 
\begin{gather*}
   \mathsf{c}_A(a) =a \qquad  \mathsf{c}_A(\mathsf{d}(a)) =\mathsf{d}^\prime(a) \qquad  \mathsf{c}_A(\mathsf{d}^\prime(a)) =\mathsf{d}(a) \qquad \mathsf{c}_A( \mathsf{d}^\prime\mathsf{d}(a)) = \mathsf{d}^\prime\mathsf{d}(a)
\end{gather*}
\end{enumerate}
So $(R\text{-}\mathsf{CALG}^{op}, \mathbb{T})$ is a Rosický tangent category \cite[Lemma 4.3]{cruttwell2023differential}. 


The differential bundles in $(R\text{-}\mathsf{CALG}^{op}, \mathbb{T})$ correspond precisely again to modules \cite[Thm 4.17]{cruttwell2023differential}. Indeed for a commutative $R$-algebra $A$ and an $A$-module $M$, let $\mathsf{S}_A(M)$ be the free symmetric $A$-algebra over $M$, that is: 
\[ \mathsf{S}_A \left( M \right) = \bigoplus \limits_{n=0}^{\infty} M^{{\otimes^s_A}^n} = A \oplus M \oplus \left( M \otimes^s_A M \right) \oplus \hdots \]
Note that as an $R$-algebra, $\mathsf{S}_A \left( M \right)$ is generated by all $a \in A$ and $m \in M$. Now $\mathsf{S}_A(M)$ is a differential bundle where the structure viewed in $R\text{-}\mathsf{CALG}$ is given as follows: 
\begin{enumerate}[{\em (i)}]
\item The projection $\mathsf{q}_{M}: A \to \mathsf{S}_A(M)$ is defined as:
\[ \mathsf{q}_{M}(a) = a \]
where the pushout $\mathsf{S}_A(M)_n$ (so the pullback in $R\text{-}\mathsf{CALG}^{op}$) is defined by tensoring $n$-copies of $\mathsf{S}_A(M)$ over $A$: 
\[\mathsf{S}_A(M)_n := {\mathsf{S}_A(M)}^{\otimes^n_A} = \mathsf{S}_A(M) \otimes_A \hdots \otimes_A \mathsf{S}_A(M) \]
\item The sum $\sigma_{M}: \mathsf{S}_A(M) \to \mathsf{S}_A(M) \otimes_A \mathsf{S}_A(M)$ is defined on generators as:
\begin{align*}
  \sigma_M(a) = a \otimes_A 1 = 1 \otimes_A a && \sigma_M(m) = m \otimes_A 1 + 1 \otimes_A m  
\end{align*}
\item The zero $\mathsf{z}_{M}: \mathsf{S}_A(M) \to M$ is defined on generators as: 
\begin{align*}
\mathsf{z}_M(a) = a && \mathsf{z}_M(m) = 0
\end{align*}
\item The negative $\iota_M: \mathsf{S}_A(M) \to \mathsf{S}_A(M)$ is defined on generators as:
\begin{align*}
    \iota_M(a) = a && \iota_M(m) = -m
\end{align*}
\end{enumerate}
To describe the lift, note that $\mathsf{T}(\mathsf{S}_A(M))$ as $R$-algebra is generated by $a$, $m$, $\mathsf{d}(a)$, and $\mathsf{d}(m)$ for all $a \in R$ and $m \in M$ (and modulo the appropriate equations). 
\begin{enumerate}[{\em (i)}]
\setcounter{enumi}{6}
\item The lift $\lambda_{M}: \mathsf{T}(\mathsf{S}_A(M)) \to \mathsf{S}_A(M)$ is defined on generators as:
\begin{align*}
    \lambda_M(a) =a && \lambda_M(m) =0 && \lambda_M(\mathsf{d}(a)) =0 && \lambda_M(\mathsf{d}(m)) = m
\end{align*}
\end{enumerate}
With this structure, $\mathsf{q}_{M}: \mathsf{S}_A(M) \to A$ is a differential bundle in $(R\text{-}\mathsf{CALG}^{op}, \mathbb{T})$ \cite[Lemma 4.11]{cruttwell2023differential}.

Conversely, given a differential bundle $\mathsf{q}: E \to A$ in $(R\text{-}\mathsf{CALG}^{op}, \mathbb{T})$, its associated $A$-module is given by the set $\lbrace \lambda(\mathsf{d}(x)) \vert~ \forall x \in E \rbrace$ \cite[Lemma 4.9]{cruttwell2023differential}. Moreover, these constructions are inverses of each other \cite[Sec 4.12]{cruttwell2023differential}, so differential bundles over $A$ in $(R\text{-}\mathsf{CALG}^{op}, \mathbb{T})$ correspond to modules over $A$. This results in an equivalence $\mathsf{DBUN}\left (R\text{-}\mathsf{CALG}^{op}, \mathbb{T}\right) \simeq \mathsf{MOD}^{op}$ \cite[Thm 4.17]{cruttwell2023differential}, or equivalently $\mathsf{DBUN}\left (R\text{-}\mathsf{CALG}^{op}, \mathbb{T}\right)^{op} \simeq \mathsf{MOD}$. 

Lastly, we observe that every differential bundle in $(R\text{-}\mathsf{CALG}^{op}, \mathbb{T})$ satisfies the Basic Condition. Indeed, since $R\text{-}\mathsf{CALG}$ is complete, all pushouts exist, and since $\mathsf{T}$ is a left adjoint (in fact its right adjoint is \rotatebox[origin=c]{180}{$\mathsf{T}$} from Sec \ref{sec:ALG-tancat}), then $\mathsf{T}$ preserves all pushouts. So dually, $R\text{-}\mathsf{CALG}^{op}$ has all pullbacks and $\mathsf{T}$ preserves all pullbacks in $R\text{-}\mathsf{CALG}^{op}$. As such, it follows that all differential bundles in $(R\text{-}\mathsf{CALG}^{op}, \mathbb{T})$ do indeed satisfy the Basic condition. So we may consider (effective vertical/horizontal) connections over any differential bundle in $(R\text{-}\mathsf{CALG}^{op}, \mathbb{T})$. 

\subsection{Tangent Category Connections for Affine Schemes}\label{sec:tan_cat_connections_in_aff}

We will now show how, for affine schemes, connections in the tangent category sense correspond to connections in the algebraic geometry sense. For the remainder of this section, we fix a commutative ring $R$, a commutative $R$-algebra $A$, and an $A$-module $M$.

We begin by reviewing the algebraic geometry notion of connection on a module, which is analogous to the notion of Koszul connection expressed in terms of covariant derivatives from differential geometry. In algebraic geometry, a connection can be defined for any quasicoherent sheaf of modules over a scheme \cite{katz1970nilpotent}. In the case of affine schemes, this translates to a notion of connection for a module over an algebra.  For a more in-depth introduction to connections in this latter setting, we invite the reader to see \cite[Chap XIX, Ex 13]{lang2012algebra} and \cite[Chap 8]{mangiarotti2000connections}. 

\begin{definition}\label{def:module-connection} A \textbf{(module)\footnote{These connections are typically referred to in the literature as just ``connections''; however, given that we are discussing several different notions of connection in this paper, we find it useful to refer to these as \emph{module} connections.} connection} \cite[Def 8.2.1]{mangiarotti2000connections} on $M$ is an $R$-linear morphism 
\[ \nabla: M \to \Omega(A) \otimes_A M \] 
such that for all $a \in A$ and $m \in M$, the following equality holds: 
\begin{align}\label{eq:connection-leibniz}
\nabla(am) = a \nabla(m) + \mathsf{d}(a) \otimes_A m 
\end{align}
\end{definition}

It is important to stress that a non-zero connection is an $R$-linear morphism and \emph{not} an $A$-linear morphism. Indeed, if a connection $\nabla$ were $A$-linear then $\nabla(am) = a\nabla(m)$, which would then imply that $\mathsf{d}(a) \otimes_A m = 0$ for all $a \in A$ and $m \in M$. Thus the only possible case of having an $A$-linear connection is when $\Omega(A) \otimes_A M = 0$ and so $\nabla =0$. 

The goal of this section is to show that module connections on $M$ correspond precisely to tangent category connections on the associated differential bundle $\mathsf{q}_M$ (see Sec \ref{sec:AFF-tangcat}). Recall that a tangent category connection is completely determined by either its horizontal connection or its (effective) vertical connection (Prop \ref{prop:ver-hor}).  It turns out that horizontal connections are the ones which are most closely related to module connections.  Thus, we will show how, from a module connection on $M$, we can build a horizontal connection (and then, from that, get an induced (effective) vertical connection).  We will also show that conversely, given a horizontal connection on the differential bundle $\mathsf{q}_M$, we can naturally extract a module connection on the module $M$. We then show that these constructions are inverses to each other. 

So, let us begin by explicitly describing what a horizontal connection on $\mathsf{q}_M$ in the tangent category $(R\text{-}\mathsf{CALG}^{op}, \mathbb{T})$ consists of.  Recall that pushouts in $R\text{-}\mathsf{CALG}$ (so pullbacks in $R\text{-}\mathsf{CALG}^{op}$) are given by taking the tensor product of algebras over the same algebra. So a horizontal connection on $\mathsf{q}_{M}$ is an $R$-algebra morphism of type 
\[ \mathsf{H}: \mathsf{T}\left( \mathsf{S}_A(M) \right) \to \mathsf{T}(A) \otimes_A \mathsf{S}_A(M) \] such that the dual diagrams of (\ref{eq:H}) commute. Since we will make explicit use of a horizontal connection on $\mathsf{q}_M$ to build a module connection on $M$ and vice-versa, it will be useful to write these dual diagrams out in full. 

For \textbf{[H.1]} and \textbf{[H.2]}, we denote the canonical injections of the pushout as $\iota_0: \mathsf{T}(A) \to \mathsf{T}(A) \otimes_A \mathsf{S}_A(M)$ and $\iota_1: \mathsf{S}_A(M) \to \mathsf{T}(A) \otimes_A \mathsf{S}_A(M)$, which are defined as follows for all $w \in \mathsf{T}(A)$ and $v \in \mathsf{S}_A(M)$: 
\begin{align*}
    \iota_0(w) = w \otimes_A 1 &&  \iota_1(v) = 1 \otimes_A v
\end{align*}
Then the following diagrams commute: 
\[ \xymatrixcolsep{2.5pc}\xymatrix{  \mathsf{T}\left( \mathsf{S}_A(M) \right) \ar@{}[dr]|(0.4){\text{\normalfont \textbf{[H.1]}}}  \ar[r]^-{ \mathsf{H}} & \mathsf{T}(A) \otimes_A \mathsf{S}_A(M) \\
\mathsf{T}(A) \ar[u]^-{\mathsf{T}(\mathsf{q}_M)} \ar@/_/[ur]_-{\iota_0} &  } \]
\[ \xymatrixcolsep{2.5pc}\xymatrix{ \mathsf{T}\left( \mathsf{S}_A(M) \right) \ar@{}[dr]|(0.4){\text{\normalfont \textbf{[H.2]}}}  \ar[r]^-{ \mathsf{H}} & \mathsf{T}(A) \otimes_A \mathsf{S}_A(M) \\
\mathsf{S}_A(M) \ar[u]^-{\mathsf{p}_{\mathsf{S}_A(M)}} \ar@/_/[ur]_-{\iota_1} &  } \]
For \textbf{[H.3]} and \textbf{[H.4]}, the canonical isomorphism $\mathsf{T}\left( \mathsf{T}(A) \otimes_A \mathsf{S}_A(M) \right) \xrightarrow{\cong} \mathsf{T}^2(A) \otimes_{\mathsf{T}(A)} \mathsf{T}\left( \mathsf{S}_A(M) \right)$ essentially applies the Leibniz rule, that is, for all $w \in \mathsf{T}(A)$ and $v \in \mathsf{S}_A(M)$:
\begin{gather*}
    w \otimes_A v \xmapsto{\cong} w \otimes_{\mathsf{T}(A)} v \qquad \mathsf{d}\left(  w \otimes_A v \right) \xmapsto{\cong} \mathsf{d}^\prime(w) \otimes_{\mathsf{T}(A)} v + w \otimes_{\mathsf{T}(A)} \mathsf{d}(v) 
\end{gather*}
Then the following diagrams also commute: 
\[\xymatrixcolsep{2.5pc}\xymatrix{ 
\mathsf{T}^2\left( \mathsf{S}_A(M) \right)  \ar[r]^-{ \mathsf{T}(\mathsf{H}) } \ar[dd]_-{\ell_{\mathsf{S}_A(M)}} & \mathsf{T}\left( \mathsf{T}(A) \otimes_A \mathsf{S}_A(M) \right) \ar[d]^-{\cong} \\
\ar@{}[r]|-{\text{\normalfont\textbf{[H.3]} }} & \mathsf{T}^2(A) \otimes_{\mathsf{T}(A)} \mathsf{T}\left( \mathsf{S}_A(M) \right)  \ar[d]|-{\ell_A \otimes_{0_A} 0_{\mathsf{S}_A(M)}}  \\
\mathsf{T}\left( \mathsf{S}_A(M) \right)  \ar[r]_-{ \mathsf{H}} & \mathsf{T}(A) \otimes_A \mathsf{S}_A(M) } \]
\[\xymatrixcolsep{2.5pc}\xymatrix{ 
\mathsf{T}^2\left( \mathsf{S}_A(M) \right)  \ar[r]^-{ \mathsf{T}(\mathsf{H}) } \ar[d]_-{\mathsf{c}_{\mathsf{S}_A(M)}} & \mathsf{T}\left( \mathsf{T}(A) \otimes_A \mathsf{S}_A(M) \right) \ar[d]^-{\cong} \\
\mathsf{T}^2\left( \mathsf{S}_A(M) \right) \ar[d]_-{\mathsf{T}(\lambda_M)} \ar@{}[r]|-{\text{\normalfont\textbf{[H.4]} }} & \mathsf{T}^2(A) \otimes_{\mathsf{T}(A)} \mathsf{T}\left( \mathsf{S}_A(M) \right)  \ar[d]|-{0_{\mathsf{T}(A)} \otimes_{0_A} \lambda}  \\
\mathsf{T}\left( \mathsf{S}_A(M) \right)  \ar[r]_-{ \mathsf{H}} & \mathsf{T}(A) \otimes_A \mathsf{S}_A(M)
}  \]
Now, abusing notation slightly, we have that $M \subset \mathsf{T}\left( \mathsf{S}_A(M) \right)$ and also that $\Omega(A) \otimes_A M \subset \mathsf{T}(A) \otimes_A \mathsf{S}_A(M)$. So, we clearly see how the type of a module connection on $M$ corresponds nicely to the type of a horizontal connection on $\mathsf{q}_M$. 

On the other hand, a vertical connection on $\mathsf{q}_{M}$ is an $R$-algebra morphism of type $\mathsf{K}: \mathsf{S}_A(M) \to \mathsf{T}\left( \mathsf{S}_A(M) \right)$ such that the dual diagrams of (\ref{eq:K}) commute. Then $\mathsf{K}$ is effective if the following is a pushout diagram: 
\[ \xymatrixcolsep{5pc}\xymatrix{ & A \ar[dr]^-{ \mathsf{q}_M }  \ar[dl]_-{ \mathsf{p}_A  } \ar[d]_-{ \mathsf{q}_M }    \\
\mathsf{T}(A)  \ar[dr]_-{  \mathsf{T}(\mathsf{q}_M) }  & \mathsf{S}_A(M)  \ar[d]_-{  \mathsf{p}_{\mathsf{S}_A(M)} }  & \mathsf{S}_A(M) \ar[dl]^-{ \mathsf{K}   } \\
&  \mathsf{T}\left( \mathsf{S}_A(M) \right)  } \]
So a tangent category connection on $\mathsf{q}_M$ is a pair $(\mathsf{K}, \mathsf{H})$ consisting of a (effective) vertical connection and a horizontal connection satisfying the dual of (\ref{eq:connection}). Recall that by Prop \ref{prop:hor-ver} and Prop \ref{prop:ver-hor}, a connection is completely determined by its horizontal connection or its (effective) vertical connection. 

Now let's explain how, given a module connection $\nabla: M \to \Omega(A) \otimes_A M$, we can build a horizontal connection. Define the $R$-algebra morphism $\mathsf{H}_\nabla: \mathsf{T}\left( \mathsf{S}_A(M) \right) \to \mathsf{T}(A) \otimes_A \mathsf{S}_A(M)$ as follows on generators: 
\begin{gather*}
    \mathsf{H}_\nabla(a) = a \otimes_A 1 = 1 \otimes_A a \qquad \mathsf{H}_\nabla(m) = 1 \otimes_A m \qquad  \mathsf{H}_\nabla(\mathsf{d}(a)) = \mathsf{d}(a) \otimes_A 1 \qquad  \mathsf{H}_\nabla(\mathsf{d}(m)) = \nabla(m)
\end{gather*}

\begin{proposition} $\mathsf{H}_\nabla$ is a horizontal connection on $\mathsf{q}_{M}$. 
\end{proposition}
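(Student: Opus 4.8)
The plan is to verify that the $k$-algebra morphism $\mathsf{H}_\nabla$ satisfies the four diagrams \textbf{[H.1]}--\textbf{[H.4]} (in their dualized, $k\text{-}\mathsf{CALG}$ form written out above). Since all maps in sight are $k$-algebra morphisms and both $\mathsf{T}(\mathsf{S}_A(M))$ and the relevant codomains are generated (as $k$-algebras) by the classes of elements $a$, $m$, $\mathsf{d}(a)$, $\mathsf{d}(m)$ and their images, it suffices to check each diagram on these generators. First I would dispatch \textbf{[H.1]} and \textbf{[H.2]}: these say $\mathsf{H}_\nabla \circ \mathsf{T}(\mathsf{q}_M) = \iota_0$ and $\mathsf{H}_\nabla \circ \mathsf{p}_{\mathsf{S}_A(M)} = \iota_1$. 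Tracing the generators $a \mapsto a$, $\mathsf{d}(a) \mapsto \mathsf{d}(a)$ through $\mathsf{T}(\mathsf{q}_M)$ and then $\mathsf{H}_\nabla$ gives $a \otimes_A 1$ and $\mathsf{d}(a)\otimes_A 1$, which is exactly $\iota_0$; and $a \mapsto a$, $m \mapsto m$ through $\mathsf{p}_{\mathsf{S}_A(M)}$ and $\mathsf{H}_\nabla$ gives $1 \otimes_A a$, $1 \otimes_A m$, which is $\iota_1$. These are immediate.

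Next I would handle \textbf{[H.3]}, which records compatibility with the vertical lift. On the generators coming from $A$ (namely $a$ and $\mathsf{d}(a)$, viewed in $\mathsf{T}^2(\mathsf{S}_A(M))$), both composites vanish on the ``$\mathsf{d}$-of-$\mathsf{d}$'' parts because $\ell_{\mathsf{S}_A(M)}$ and $\ell_A$ kill first-order differentials, so only the base-level terms survive and agree. The content is in the generator $\mathsf{d}(m)$: along the top-right path, $\mathsf{T}(\mathsf{H})$ sends $\mathsf{d}(m)$ to $\mathsf{d}(\nabla(m))$, then the canonical iso applies the Leibniz rule (splitting $\mathsf{d}(\mathsf{d}(a)\otimes_A m)$ into $\mathsf{d}^\prime\mathsf{d}(a)\otimes_{\mathsf{T}(A)} m + \mathsf{d}(a)\otimes_{\mathsf{T}(A)}\mathsf{d}(m)$), and finally $\ell_A \otimes_{0_A} 0_{\mathsf{S}_A(M)}$ is applied; along the left-down path, $\ell_{\mathsf{S}_A(M)}(\mathsf{d}(m)) = \mathsf{d}^\prime\mathsf{d}(m)$-type term, pushed through $\mathsf{H}_\nabla$. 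The key computation is that $\ell_A$ sends the first-order differentials appearing in $\nabla(m) \in \Omega(A)\otimes_A M$ to zero while the ``second-order'' $\mathsf{d}^\prime\mathsf{d}$ pieces land on first-order $\mathsf{d}$'s, and one checks both sides equal the image of $\nabla(m)$ under the evident inclusion $\Omega(A)\otimes_A M \hookrightarrow \mathsf{T}(A)\otimes_A \mathsf{S}_A(M)$. I expect this to be the main obstacle: one has to be careful about which copies of ``$\mathsf{d}$'' are which and how $\ell$ acts on a general element of $\Omega(A)\otimes_A M$ (not just a generator), using $k$-linearity and the Leibniz rule for $\nabla$ to reduce to generators.

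Finally, \textbf{[H.4]} encodes compatibility with the canonical flip $\mathsf{c}$ and is where the defining property \eqref{eq:connection-leibniz} of $\nabla$ is genuinely used. Again the base generators $a, m, \mathsf{d}(a)$ are routine ($\mathsf{c}$ swaps $\mathsf{d} \leftrightarrow \mathsf{d}^\prime$ in a way mirrored by the iso and the structural maps). For $\mathsf{d}(m)$: the left path sends $\mathsf{d}(m)$ via $\mathsf{c}_{\mathsf{S}_A(M)}$ to a $\mathsf{d}^\prime$-flavored term, then $\mathsf{T}(\lambda_M)$ collapses it, then $\mathsf{H}_\nabla$; the right path sends $\mathsf{d}(m)$ via $\mathsf{T}(\mathsf{H}_\nabla)$ to $\mathsf{d}(\nabla(m))$, then the Leibniz iso, then $0_{\mathsf{T}(A)} \otimes_{0_A} \lambda$. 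Writing $\nabla(m) = \sum_i \mathsf{d}(a_i) \otimes_A m_i$ and pushing through, the right side becomes $\sum_i 0(\mathsf{d}^\prime\mathsf{d}(a_i))\otimes\cdots + \sum_i 0(\mathsf{d}(a_i)) \otimes_A \lambda(\mathsf{d}(m_i)) = \sum_i 1 \otimes_A m_i$ after $\lambda_M(\mathsf{d}(m_i)) = m_i$; comparing with the left side forces the identity $\sum_i a_i \,(\text{something}) = \dots$, and here the Leibniz rule $\nabla(am) = a\nabla(m) + \mathsf{d}(a)\otimes_A m$ is exactly what makes the two expressions agree. The bulk of the work is this bookkeeping of multi-level differentials; once one fixes clean notation for $\mathsf{T}^2$ and $\mathsf{T}(\mathsf{T}(A)\otimes_A\mathsf{S}_A(M))$ on generators, each check reduces to a short symbolic manipulation, and the proposition follows.
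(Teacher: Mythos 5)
Your overall strategy (verify \textbf{[H.1]}--\textbf{[H.4]} on generators) is the paper's strategy, but there are two genuine gaps. First, you never check that $\mathsf{H}_\nabla$ is a well-defined $k$-algebra morphism. It is defined on generators of $\mathsf{T}(\mathsf{S}_A(M))$, but these generators satisfy the relation $\mathsf{d}(am) = a\,\mathsf{d}(m) + m\,\mathsf{d}(a)$, and one must verify that the prescribed values respect it, i.e.\ that $\nabla(am) = a\nabla(m) + \mathsf{d}(a)\otimes_A m$. This is precisely where the Leibniz rule \eqref{eq:connection-leibniz} enters the proof --- not in \textbf{[H.4]}, as you claim. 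Without this step the map $\mathsf{H}_\nabla$ is not even known to exist, so it cannot be omitted.

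Second, your generator analysis for \textbf{[H.3]} and \textbf{[H.4]} is carried out on the wrong algebra. The domain of those two squares is $\mathsf{T}^2(\mathsf{S}_A(M))$, which has \emph{eight} sorts of generators ($a$, $m$, $\mathsf{d}(a)$, $\mathsf{d}(m)$, $\mathsf{d}^\prime(a)$, $\mathsf{d}^\prime(m)$, $\mathsf{d}^\prime\mathsf{d}(a)$, $\mathsf{d}^\prime\mathsf{d}(m)$), not the four you list, and the substantive checks are exactly on the ones you skip. This is not mere bookkeeping: your sketch of the $\mathsf{d}(m)$ case of \textbf{[H.4]} conflates $\mathsf{d}(m)$ with $\mathsf{d}^\prime(m)$ and $\mathsf{d}^\prime\mathsf{d}(m)$. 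On the generator $\mathsf{d}(m)$ itself, $\mathsf{T}(\mathsf{H})$ gives $\mathsf{H}(\mathsf{d}(m)) = \nabla(m)$ (not $\mathsf{d}(\nabla(m))$), and the composite with $0_{\mathsf{T}(A)}\otimes_{0_A}\lambda_M$ is $0$ (not $\sum_i 1\otimes_A m_i$), matching the left path which is also $0$ since $\mathsf{c}(\mathsf{d}(m)) = \mathsf{d}^\prime(m)$ is killed by $\mathsf{T}(\lambda_M)$. The value $\sum_i 1\otimes_A m_i$ you compute never appears; the generator $\mathsf{d}^\prime(m)$ yields $1\otimes_A m$ on both sides, and $\mathsf{d}^\prime\mathsf{d}(m)$ yields $\nabla(m)$ on both sides via the Leibniz expansion of the canonical isomorphism --- no appeal to the Leibniz rule of $\nabla$ is needed there. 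As written, your \textbf{[H.4]} argument would not close up; redoing the case analysis over all eight generators (as the paper does) is required.
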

\begin{proof} We first need to explain why $\mathsf{H}_\nabla$ is a well-defined $R$-algebra morphism. The only aspect that needs to be checked is that $\mathsf{H}_\nabla$ behaves well on $\mathsf{d}(am) = a \mathsf{d}(m) + m \mathsf{d}(a)$. However, this follows from the Leibniz rule of $\nabla$: 
\begin{gather*}
   \mathsf{H}_\nabla(\mathsf{d}(am)) = \nabla(am) = a \nabla(m) + \mathsf{d}(a) \otimes_A m \\
   =  \mathsf{H}_\nabla(a)  \mathsf{H}_\nabla(\mathsf{d}(m)) +  \mathsf{H}_\nabla(m) \mathsf{H}_\nabla(\mathsf{d}(a)) = \mathsf{H}_\nabla\left(a \mathsf{d}(m) + m \mathsf{d}(a) \right)
\end{gather*}

Next, we need to show that $\mathsf{H}_\nabla$ satisfies the four necessary diagrams, and it suffices to check these on generators. Observe that the first two required diagrams are immediate by definition of $\mathsf{H}_\nabla$. Indeed, \textbf{[H.1]} holds since $\mathsf{H}_\nabla(a) = a \otimes_A 1 = \iota_0(a)$ and $ \mathsf{H}_\nabla(\mathsf{d}(a)) = \mathsf{d}(a) \otimes_A 1 = \iota_0(\mathsf{d}(a))$, while \textbf{[H.2]} holds since $\mathsf{H}_\nabla(a) = 1 \otimes_A a = \iota_1(a)$ and $\mathsf{H}_\nabla(m) = 1 \otimes_A m= \iota_1(m)$. 

So it remains to show \textbf{[H.3]} and \textbf{[H.4]}. Now note that $\mathsf{T}^2\left( \mathsf{S}_A(M) \right)$ has eight sorts of generators: $a$, $m$, $\mathsf{d}(a)$, $\mathsf{d}(m)$, $\mathsf{d}^\prime(a)$, $\mathsf{d}^\prime(m)$, $\mathsf{d}^\prime\mathsf{d}(a)$, and $\mathsf{d}^\prime\mathsf{d}(m)$. For the following calculations, we set:
\[\nabla(m) = \sum^n_{i=1} \mathsf{d}(a_i) \otimes_A m_i\]

For \textbf{[H.3]}, starting with the $a$ generator, we compute that: 
\begin{gather*}
    a \xmapsto{ \ell_{\mathsf{S}_A(M)} } a \xmapsto{ \mathsf{H}_\nabla } a \otimes_A 1 
\end{gather*}
\begin{gather*}
    a \xmapsto{ \mathsf{T}(\mathsf{H}) } \mathsf{H}(a) = a \otimes_A 1 \xmapsto{\cong} a \otimes_{\mathsf{T}(A)} 1 \xmapsto{\ell_A \otimes_{0_A} 0_{\mathsf{S}_A(M)}} a \otimes_A 1 
\end{gather*}
For the $m$ generator, we get that: 
\begin{gather*}
    m \xmapsto{ \ell_{\mathsf{S}_A(M)} } m \xmapsto{ \mathsf{H}_\nabla } 1 \otimes_A m 
\end{gather*}
\begin{gather*}
    m \xmapsto{ \mathsf{T}(\mathsf{H}) } \mathsf{H}(m) = 1 \otimes_A m \xmapsto{\cong} 1 \otimes_{\mathsf{T}(A)} m \xmapsto{\ell_A \otimes_{0_A} 0_{\mathsf{S}_A(M)}} 1 \otimes_A m 
\end{gather*}
For the $\mathsf{d}(a)$ generator, we get: 
\begin{gather*}
    \mathsf{d}(a) \xmapsto{ \ell_{\mathsf{S}_A(M)} } 0 \xmapsto{ \mathsf{H}_\nabla } 0
\end{gather*}
\begin{gather*}
    \mathsf{d}(a) \xmapsto{ \mathsf{T}(\mathsf{H}) } \mathsf{H}(\mathsf{d}(a)) = \mathsf{d}(a) \otimes_A 1 \xmapsto{\cong} \mathsf{d}(a) \otimes_{\mathsf{T}(A)} 1 \xmapsto{\ell_A \otimes_{0_A} 0_{\mathsf{S}_A(M)}} 0  
\end{gather*}
For the $\mathsf{d}(m)$ generator, we get that: 
\begin{gather*}
    \mathsf{d}(m) \xmapsto{ \ell_{\mathsf{S}_A(M)} } 0 \xmapsto{ \mathsf{H}_\nabla } 0
\end{gather*}
\begin{gather*}
    \mathsf{d}(m) \xmapsto{ \mathsf{T}(\mathsf{H}) } \mathsf{H}(\mathsf{d}(m)) = \nabla(m) = \sum^n_{i=1} \mathsf{d}(a_i) \otimes_A m_i     \xmapsto{\cong} \sum^n_{i=1} \mathsf{d}(a_i) \otimes_{\mathsf{T}(A)} m_i \xmapsto{\ell_A \otimes_{0_A} 0_{\mathsf{S}_A(M)}} 0  
\end{gather*}
For the $\mathsf{d}^\prime(a)$ generator, we get: 
\begin{gather*}
    \mathsf{d}^\prime(a) \xmapsto{ \ell_{\mathsf{S}_A(M)} } 0 \xmapsto{ \mathsf{H}_\nabla } 0
\end{gather*}
\begin{gather*}
    \mathsf{d}^\prime(a) \xmapsto{ \mathsf{T}(\mathsf{H}) } \mathsf{d}\left(\mathsf{H}(a)\right) = \mathsf{d}(a \otimes_A 1) \xmapsto{\cong} \mathsf{d}(a) \otimes_{\mathsf{T}(A)} 1 \xmapsto{\ell_A \otimes_{0_A} 0_{\mathsf{S}_A(M)}} 0  
\end{gather*}
For the $\mathsf{d}^\prime(m)$generator, we get that:
\begin{gather*}
    \mathsf{d}^\prime(m) \xmapsto{ \ell_{\mathsf{S}_A(M)} } 0 \xmapsto{ \mathsf{H}_\nabla } 0
\end{gather*}
\begin{gather*}
    \mathsf{d}^\prime(m) \xmapsto{ \mathsf{T}(\mathsf{H}) } \mathsf{d}\left(\mathsf{H}(m)\right) = \mathsf{d}(1 \otimes_A m) \xmapsto{\cong} 1 \otimes_{\mathsf{T}(A)} \mathsf{d}(m) \xmapsto{\ell_A \otimes_{0_A} 0_{\mathsf{S}_A(M)}} 0  
\end{gather*}
For the $\mathsf{d}^\prime\mathsf{d}(a)$ generator, we get: 
\begin{gather*}
    \mathsf{d}^\prime\mathsf{d}(a) \xmapsto{ \ell_{\mathsf{S}_A(M)} } \mathsf{d}(a) \xmapsto{ \mathsf{H}_\nabla } \mathsf{d}(a) \otimes_A 1
\end{gather*}
\begin{gather*}
    \mathsf{d}^\prime\mathsf{d}(a) \xmapsto{ \mathsf{T}(\mathsf{H}) } \mathsf{d}\left(\mathsf{H}(\mathsf{d}(a))\right) = \mathsf{d}( \mathsf{d}(a) \otimes_A 1)   \xmapsto{\cong} \mathsf{d}^\prime\mathsf{d}(a) \otimes_{\mathsf{T}(A)} 1 \xmapsto{\ell_A \otimes_{0_A} 0_{\mathsf{S}_A(M)}} \mathsf{d}(a) \otimes_A 1   
\end{gather*}
Lastly, for the $\mathsf{d}^\prime\mathsf{d}(m)$ generator, we compute that: 
\begin{gather*}
    \mathsf{d}^\prime(m) \xmapsto{ \ell_{\mathsf{S}_A(M)} } \mathsf{d}(m) \xmapsto{ \mathsf{H}_\nabla } \nabla(m)
\end{gather*}
\begin{gather*}
    \mathsf{d}^\prime\mathsf{d}(m) \xmapsto{ \mathsf{T}(\mathsf{H}) } \mathsf{d}\left(\mathsf{H}(\mathsf{d}(m))\right) = \mathsf{d}( \nabla(m) ) = \sum^n_{i=1} \mathsf{d}\left( \mathsf{d}(a_i) \otimes_A m_i \right) \\ 
    \xmapsto{\cong} \sum^n_{i=1} \mathsf{d}^\prime \mathsf{d}(a_i) \otimes_{\mathsf{T}(A)} m_i + \sum^n_{i=1}  \mathsf{d}(a_i) \otimes_{\mathsf{T}(A)} \mathsf{d}(m_i)   \xmapsto{\ell_A \otimes_{0_A} 0_{\mathsf{S}_A(M)}} \sum^n_{i=1} \mathsf{d}(a_i) \otimes_A m_i = \nabla(m)    
\end{gather*}
So \textbf{[H.3]} holds. 

For \textbf{[H.4]}, on the $a$ generator we first compute that: 
\begin{gather*}
    a \xmapsto{ \mathsf{c}_{\mathsf{S}_A(M)} } a \xmapsto{ \mathsf{T}(\lambda_M) } \lambda_M(a) = a \xmapsto{ \mathsf{H}_\nabla } a \otimes_A 1
\end{gather*}
\begin{gather*}
    a \xmapsto{ \mathsf{T}(\mathsf{H}) } \mathsf{H}(a) = a \otimes_A 1 \xmapsto{\cong} a \otimes_{\mathsf{T}(A)} 1 \xmapsto{0_{\mathsf{T}(A)} \otimes_{0_A} \lambda_M} a \otimes_A 1 
\end{gather*}
For the $m$ generator, we get that: 
\begin{gather*}
    m \xmapsto{ \mathsf{c}_{\mathsf{S}_A(M)} } m \xmapsto{ \mathsf{T}(\lambda_M) } \lambda_M(m) = 0 \xmapsto{ \mathsf{H}_\nabla } 0 
\end{gather*}
\begin{gather*}
    m \xmapsto{ \mathsf{T}(\mathsf{H}) } \mathsf{H}(m) = 1 \otimes_A m \xmapsto{\cong} 1 \otimes_{\mathsf{T}(A)} m \xmapsto{0_{\mathsf{T}(A)} \otimes_{0_A} \lambda_M} 0 
\end{gather*}
For the $\mathsf{d}(a)$ generator, we get:
\begin{gather*}
    \mathsf{d}(a) \xmapsto{ \mathsf{c}_{\mathsf{S}_A(M)} } \mathsf{d}^\prime(a) \xmapsto{ \mathsf{T}(\lambda_M) } \mathsf{d}(\lambda_M(a)) = \mathsf{d}(a) \xmapsto{ \mathsf{H}_\nabla } \mathsf{d}(a) \otimes_A 1 
\end{gather*}
\begin{gather*}
    \mathsf{d}(a) \xmapsto{ \mathsf{T}(\mathsf{H}) } \mathsf{H}(\mathsf{d}(a)) = \mathsf{d}(a) \otimes_A 1 \xmapsto{\cong} \mathsf{d}(a) \otimes_{\mathsf{T}(A)} 1 \xmapsto{0_{\mathsf{T}(A)} \otimes_{0_A} \lambda_M} \mathsf{d}(a) \otimes_A 1   
\end{gather*}
For $\mathsf{d}^\prime(m)$generator, we get that:
\begin{gather*}
    \mathsf{d}(m) \xmapsto{ \mathsf{c}_{\mathsf{S}_A(M)} } \mathsf{d}^\prime(m) \xmapsto{ \mathsf{T}(\lambda_M) } \mathsf{d}(\lambda_M(m)) = 0 \xmapsto{ \mathsf{H}_\nabla } 0 
\end{gather*}
\begin{gather*}
    \mathsf{d}(m) \xmapsto{ \mathsf{T}(\mathsf{H}) } \mathsf{H}(\mathsf{d}(m)) = \nabla(m) = \sum^n_{i=1} \mathsf{d}(a_i) \otimes_A m_i     \xmapsto{\cong} \sum^n_{i=1} \mathsf{d}(a_i) \otimes_A m_i \xmapsto{0_{\mathsf{T}(A)} \otimes_{0_A} \lambda_M} 0  
\end{gather*}
For the $\mathsf{d}^\prime(a)$ generator, we get: 
\begin{gather*}
    \mathsf{d}^\prime(a) \xmapsto{ \mathsf{c}_{\mathsf{S}_A(M)} } \mathsf{d}(a) \xmapsto{ \mathsf{T}(\lambda_M) } \lambda_M( \mathsf{d}(a) ) = 0 \xmapsto{ \mathsf{H}_\nabla } 0
\end{gather*}
\begin{gather*}
    \mathsf{d}^\prime(a) \xmapsto{ \mathsf{T}(\mathsf{H}) } \mathsf{d}\left(\mathsf{H}(a)\right) = \mathsf{d}(1 \otimes_A a) \xmapsto{\cong} 1 \otimes_{\mathsf{T}(A)} \mathsf{d}(a) \xmapsto{0_{\mathsf{T}(A)} \otimes_{0_A} \lambda_M} 0  
\end{gather*}
For $\mathsf{d}^\prime(m)$generator, we get that:
\begin{gather*}
    \mathsf{d}^\prime(m) \xmapsto{ \mathsf{c}_{\mathsf{S}_A(M)} } \mathsf{d}(m) \xmapsto{ \mathsf{T}(\lambda_M) } \lambda_M( \mathsf{d}(m) ) = m \xmapsto{ \mathsf{H}_\nabla } 1 \otimes_A m 
\end{gather*}
\begin{gather*}
    \mathsf{d}^\prime(m) \xmapsto{ \mathsf{T}(\mathsf{H}) } \mathsf{d}\left(\mathsf{H}(m)\right) = \mathsf{d}(1 \otimes_A m) \xmapsto{\cong} 1 \otimes_{\mathsf{T}(A)} \mathsf{d}(m) \xmapsto{0_{\mathsf{T}(A)} \otimes_{0_A} \lambda_M} 1 \otimes_A m   
\end{gather*}
For the $\mathsf{d}^\prime\mathsf{d}(a)$ generator, we get: 
\begin{gather*}
    \mathsf{d}^\prime\mathsf{d}(a) \xmapsto{ \mathsf{c}_{\mathsf{S}_A(M)} } \mathsf{d}^\prime \mathsf{d}(a) \xmapsto{ \mathsf{T}(\lambda_M) } \mathsf{d}\left( \lambda_M( \mathsf{d}(a) ) \right) = 0 \xmapsto{ \mathsf{H}_\nabla } 0 
\end{gather*}
\begin{gather*}
    \mathsf{d}^\prime\mathsf{d}(a) \xmapsto{ \mathsf{T}(\mathsf{H}) } \mathsf{d}\left(\mathsf{H}(\mathsf{d}(a))\right) = \mathsf{d}( \mathsf{d}(a) \otimes_A 1) \xmapsto{\cong} \mathsf{d}^\prime\mathsf{d}(a) \otimes_{\mathsf{T}(A)} 1 \xmapsto{0_{\mathsf{T}(A)} \otimes_{0_A} \lambda_M} 0  
\end{gather*}
Lastly, for the $\mathsf{d}^\prime\mathsf{d}(m)$ generator, we compute that:
\begin{gather*}
    \mathsf{d}^\prime\mathsf{d}(m) \xmapsto{ \mathsf{c}_{\mathsf{S}_A(M)} } \mathsf{d}^\prime \mathsf{d}(m) \xmapsto{ \mathsf{T}(\lambda_M) } \mathsf{d}\left( \lambda_M( \mathsf{d}(m) ) \right) = \mathsf{d}(m) \xmapsto{ \mathsf{H}_\nabla } \nabla(M) 
\end{gather*}
\begin{gather*}
    \mathsf{d}^\prime\mathsf{d}(m) \xmapsto{ \mathsf{T}(\mathsf{H}) } \mathsf{d}\left(\mathsf{H}(\mathsf{d}(m))\right) = \mathsf{d}( \nabla(m) ) = \sum^n_{i=1} \mathsf{d}\left( \mathsf{d}(a_i) \otimes m_i \right) \\ 
    \xmapsto{\cong}  \sum^n_{i=1} \mathsf{d}^\prime \mathsf{d}(a_i) \otimes_{\mathsf{T}(A)} m_i + \sum^n_{i=1}  \mathsf{d}(a_i) \otimes_{\mathsf{T}(A)} \mathsf{d}(m_i)   \xmapsto{0_{\mathsf{T}(A)} \otimes_{0_A} \lambda_M} \sum^n_{i=1} \mathsf{d}(a_i) \otimes m_i = \nabla(m)    
\end{gather*}
So \textbf{[H.4]} holds.
\end{proof}

Now that we have a horizontal connection let us build the induced effective vertical connection. To do so, we first have to discuss subtraction and the bracketing operation viewed in $R\text{-}\mathsf{CALG}$. For a commutative $R$-algebra $B$ and $R$-algebra morphisms $f: \mathsf{S}_A(M) \to B$ and $g: \mathsf{S}_A(M) \to B$, the condition $f \circ \mathsf{q}_M = g \circ \mathsf{q}_M$ precisely says that $f(a) = g(a)$. In this case, the $R$-algebra morphism $f -_{\mathsf{q}_M} g: \mathsf{S}_A(M) \to B$ is defined on generators as follows: 
\begin{align*}
    (f -_{\mathsf{q}_M} g)(a) = f(a) = g(a) && (f -_{\mathsf{q}_M} g)(m) = f(m) - g(m) 
\end{align*}
On the other hand, an $R$-algebra morphism $h: \mathsf{T}\left( \mathsf{S}_A(M) \right) \to B$ satisfies the bracketing condition if on generators $h(\mathsf{d}(a)) = 0$. Then $\lbrace h \rbrace: \mathsf{S}_A(M) \to B$ is worked out to be defined on generators as follows: 
\begin{align*}
    \lbrace h \rbrace (a) = h(a) && \lbrace h \rbrace(m) = h( \mathsf{d}(m) )
\end{align*}
Now the canonical map $\mathsf{U}_{\mathsf{q}_M}:  \mathsf{T}(A) \otimes_A \mathsf{S}_A(M) \to \mathsf{T}\left( \mathsf{S}_A(M) \right)$ is given by multiplication, that is, for $a,b \in A$ and $m \in M$, we have that:  
\begin{gather*}
   \mathsf{U}_{\mathsf{q}_M}(a \otimes_A b) = a b \qquad \mathsf{U}_{\mathsf{q}_M}(a \otimes_A m) = am \qquad  \mathsf{U}_{\mathsf{q}_M}(\mathsf{d}(a) \otimes_A b) = b \mathsf{d}(a) \qquad \mathsf{U}_{\mathsf{q}_M}(\mathsf{d}(a) \otimes_A m) = m \mathsf{d}(a)
\end{gather*}
Using $\mathsf{U}_{\mathsf{q}_M}$, we can extend our connection $\nabla$ into $\mathsf{T}\left( \mathsf{S}_A(M) \right)$, that is, define $\nabla_\mathsf{K}: M \to \mathsf{T}\left( \mathsf{S}_A(M) \right)$ as $\nabla_\mathsf{K}(m) = \mathsf{U}_{\mathsf{q}_M}(\nabla(m))$. Explicitly:
\begin{align*}
    \nabla(m) = \sum^n_{i=1}  \mathsf{d}(a_i) \otimes m_i &&\Longrightarrow&& \nabla_\mathsf{K}(m) = \sum^n_{i=1}  m_i \mathsf{d}(a_i)
\end{align*}
As such, note that by definition we have that $\mathsf{U}_{\mathsf{q}_M}\left( \mathsf{H}_\nabla(m) \right) = \nabla_\mathsf{K}(m)$. Therefore, the $R$-algebra morphism $\mathsf{K}^\flat_\nabla: \mathsf{T}\left( \mathsf{S}_A(M) \right) \to \mathsf{T}\left( \mathsf{S}_A(M) \right)$, defined as $\mathsf{K}^\flat_\nabla = 1_{\mathsf{T}\left( \mathsf{S}_A(M) \right)} -_{\mathsf{q}_M} \mathsf{U}_{\mathsf{q}_M} \circ  \mathsf{H}_\nabla$, is worked out on generators as: 
\begin{gather*}
    \mathsf{K}^\flat_\nabla(a) = a \qquad \mathsf{K}^\flat_\nabla(m) = m \qquad  \mathsf{K}^\flat_\nabla(\mathsf{d}(a)) = 0 \qquad  \mathsf{K}^\flat_\nabla(\mathsf{d}(m)) = \mathsf{d}(m) - \nabla_\mathsf{K}(m)
\end{gather*}
Finally, applying the bracketing operation results in the $R$-algebra morphism $\mathsf{K}_\nabla: \mathsf{S}_A(M) \to \mathsf{T}(\mathsf{S}_A(M))$ given on generators as follows: 
\begin{align}
\mathsf{K}_\nabla(a) = a && \mathsf{K}_\nabla(m) = \mathsf{d}(m) - \nabla_\mathsf{K}(m)
\end{align}

\begin{corollary} $\mathsf{K}_\nabla$ is an effective vertical connection on $\mathsf{q}_{M}$. 
\end{corollary}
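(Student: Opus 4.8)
The plan is to deduce this directly from the general tangent-category machinery recalled in Section~\ref{sec:connections}, so that essentially no new computation is needed. Three ingredients are already in hand. First, the preceding proposition shows that $\mathsf{H}_\nabla$ is a horizontal connection on the differential bundle $\mathsf{q}_M$. Second, we observed in Section~\ref{sec:AFF-tangcat} that every differential bundle in $(k\text{-}\mathsf{CALG}^{op},\mathbb{T})$ satisfies the Basic Condition. Third, the explicit $k$-algebra morphism $\mathsf{K}_\nabla$ was built by following exactly the recipe of Proposition~\ref{prop:hor-ver}: we formed $\mathsf{K}^\flat_\nabla = 1_{\mathsf{T}(\mathsf{S}_A(M))} -_{\mathsf{q}_M} \mathsf{U}_{\mathsf{q}_M} \circ \mathsf{H}_\nabla$, checked the bracketing condition (which in $k\text{-}\mathsf{CALG}$ reads $\mathsf{K}^\flat_\nabla(\mathsf{d}(a)) = 0$, visible from its generator formulas), and set $\mathsf{K}_\nabla = \lbrace \mathsf{K}^\flat_\nabla \rbrace$.

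Once this identification is made, Proposition~\ref{prop:hor-ver} yields two conclusions simultaneously: $\mathsf{K}_\nabla$ is a vertical connection on $\mathsf{q}_M$, and the pair $(\mathsf{K}_\nabla, \mathsf{H}_\nabla)$ is a connection on $\mathsf{q}_M$. The lemma of Lucyshyn-Wright recalled in Section~\ref{sec:connections} --- that the vertical component of any connection on a differential bundle satisfying the Basic Condition is an effective vertical connection --- then applies verbatim, and gives that $\mathsf{K}_\nabla$ is an effective vertical connection. This completes the argument.

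There is no genuine obstacle at this stage: the substantive work was done in the preceding proposition, where $\mathsf{H}_\nabla$ was verified to be a bona fide horizontal connection on all the relevant generators, and the corollary merely harvests the abstract consequences. A reader who wished to avoid the abstract lemma could instead check the effectiveness pushout square by hand, exhibiting an isomorphism $\mathsf{T}(\mathsf{S}_A(M)) \cong \mathsf{T}(A) \otimes_A \mathsf{S}_A(M) \otimes_A \mathsf{S}_A(M)$ that sends $\mathsf{d}(a)$ into the first tensor factor, $m$ into the second, and $\mathsf{d}(m) - \nabla_\mathsf{K}(m)$ into the third, with the relation $\mathsf{d}(am) = a\,\mathsf{d}(m) + m\,\mathsf{d}(a)$ respected thanks to the Leibniz rule for $\nabla$; the only fiddly point on that route is bookkeeping the base change between $\otimes_{\mathsf{T}(A)}$ and $\otimes_A$, which is why the abstract argument is the one I would present.
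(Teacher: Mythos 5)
Your argument is correct and is exactly the one the paper intends: the corollary is harvested from Proposition~\ref{prop:hor-ver} (which produces the unique vertical connection $\mathsf{K}_\nabla = \lbrace \mathsf{K}^\flat_\nabla \rbrace$ pairing with $\mathsf{H}_\nabla$) together with the recalled lemma of Lucyshyn-Wright that the vertical component of any connection on a differential bundle satisfying the Basic Condition is effective, the Basic Condition having already been verified for all differential bundles in $(k\text{-}\mathsf{CALG}^{op},\mathbb{T})$. The paper gives no separate proof precisely because this is the intended deduction, so nothing further is needed.
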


Putting these results together, we have:

\begin{proposition} $(\mathsf{K}_\nabla, \mathsf{H}_\nabla)$ is a tangent category connection on $\mathsf{q}_{M}$.
\end{proposition}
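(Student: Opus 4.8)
The plan is not to check the two equalities of Definition~\ref{def:connection} by hand, but to recognize that $\mathsf{K}_\nabla$ was constructed precisely so that Proposition~\ref{prop:hor-ver} applies. The Proposition just above establishes that $\mathsf{H}_\nabla$ is a horizontal connection on $\mathsf{q}_M$, and since every differential bundle in $(k\text{-}\mathsf{CALG}^{op},\mathbb{T})$ satisfies the Basic Condition, Proposition~\ref{prop:hor-ver} produces from $\mathsf{H}_\nabla$ a \emph{unique} vertical connection $\mathsf{K}$ on $\mathsf{q}_M$ making $(\mathsf{K},\mathsf{H}_\nabla)$ a tangent category connection, namely $\mathsf{K}=\lbrace \mathsf{K}^\flat\rbrace$ with $\mathsf{K}^\flat = 1_{\mathsf{T}(\mathsf{S}_A(M))} -_{\mathsf{p}_{\mathsf{S}_A(M)}} \mathsf{H}_\nabla\circ\mathsf{U}_{\mathsf{q}_M}$. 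So the whole statement reduces to checking that this $\mathsf{K}$ equals the map $\mathsf{K}_\nabla$ defined earlier in the section.

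For that I would first identify $\mathsf{K}^\flat$ with the $k$-algebra morphism $\mathsf{K}^\flat_\nabla$ computed above. Both $1_{\mathsf{T}(\mathsf{S}_A(M))}$ and $\mathsf{U}_{\mathsf{q}_M}\circ\mathsf{H}_\nabla$ fix the generators $a$ and $m$, so the difference in the bundle $\mathsf{p}_{\mathsf{S}_A(M)}$ is defined, and by the generator-wise description of this subtraction recorded in this section it fixes $a,m$ and subtracts the values on the remaining generators; feeding in $\mathsf{H}_\nabla(\mathsf{d}(a))=\mathsf{d}(a)\otimes_A 1$, $\mathsf{H}_\nabla(\mathsf{d}(m))=\nabla(m)$ together with the formulas for $\mathsf{U}_{\mathsf{q}_M}$ yields $\mathsf{K}^\flat(\mathsf{d}(a))=0$ and $\mathsf{K}^\flat(\mathsf{d}(m))=\mathsf{d}(m)-\nabla_\mathsf{K}(m)$, i.e. $\mathsf{K}^\flat=\mathsf{K}^\flat_\nabla$. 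Since $\mathsf{K}^\flat_\nabla$ sends every $\mathsf{d}(a)$ to $0$ it satisfies the bracketing condition, and by the concrete description of $\lbrace-\rbrace$ its bracketing is $a\mapsto \mathsf{K}^\flat_\nabla(a)=a$ and $m\mapsto \mathsf{K}^\flat_\nabla(\mathsf{d}(m))=\mathsf{d}(m)-\nabla_\mathsf{K}(m)$, which is exactly the displayed formula for $\mathsf{K}_\nabla$. Hence $\mathsf{K}_\nabla=\lbrace\mathsf{K}^\flat_\nabla\rbrace=\lbrace\mathsf{K}^\flat\rbrace=\mathsf{K}$, and Proposition~\ref{prop:hor-ver} immediately gives that $(\mathsf{K}_\nabla,\mathsf{H}_\nabla)$ is a connection on $\mathsf{q}_M$.

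The only step requiring genuine care is the translation between the abstract operations $-_{\mathsf{p}_{\mathsf{S}_A(M)}}$ and $\lbrace-\rbrace$ appearing in Proposition~\ref{prop:hor-ver} and their explicit generator-wise descriptions in $k\text{-}\mathsf{CALG}$, but this bookkeeping is exactly what the paragraphs preceding the statement set up, so I do not anticipate a real obstacle — the definition of $\mathsf{K}_\nabla$ was reverse-engineered from that recipe. As a sanity check one notes that the earlier Corollary independently verified that $\mathsf{K}_\nabla$ is an effective vertical connection, consistent with the general fact that the vertical connection of any connection is effective. If the identification of the abstract operations turned out to be delicate, the fallback would be to bypass Proposition~\ref{prop:hor-ver} and verify the duals of the two equalities in~(\ref{eq:connection}) directly on the (respectively four and eight) sorts of generators, but that is strictly more work.
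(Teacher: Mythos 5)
Your proposal is correct and is essentially the paper's own argument: the paper offers no separate proof of this proposition beyond ``putting these results together,'' precisely because $\mathsf{K}_\nabla$ was constructed in the preceding paragraphs by applying Proposition~\ref{prop:hor-ver} to $\mathsf{H}_\nabla$ (working out $-_{\mathsf{p}_{\mathsf{S}_A(M)}}$, $\mathsf{U}_{\mathsf{q}_M}$, and the bracketing on generators exactly as you describe). Your explicit check that the generator-wise recipe reproduces the displayed formula for $\mathsf{K}_\nabla$ is the same bookkeeping the paper performs, stated if anything a little more carefully regarding which differential bundle the subtraction is taken in.
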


Now, let us go in the other direction. So let $\mathsf{H}: \mathsf{T}\left( \mathsf{S}_A(M) \right) \to \mathsf{T}(A) \otimes_A \mathsf{S}_A(M)$ be a horizontal connection on $\mathsf{q}_{M}$. Since we want to give the inverse of the previous construction, we need to show that $\mathsf{H}(\mathsf{d}(-))$ gives a module connection on $M$. In order for this to be well-typed, we need to explain why for all $m \in M$, $\mathsf{H}(\mathsf{d}(m) ) \in \Omega(A) \otimes_A M$. 

Consider the canonical injections and ``projections": 
\begin{gather*}
    j_{\Omega(A)}: \Omega(A) \to \mathsf{T}(A) \qquad j_M: M \to \mathsf{S}_M(A) \\ 
    p_{\Omega(A)}: \mathsf{T}(A) \to \Omega(A) \qquad p_M: \mathsf{S}_M(A) \to M
\end{gather*}
Now since these injections and projections are $A$-linear, we also get:
\begin{gather*}
    j_{\Omega(A)} \otimes_A j_M:  \Omega(A) \otimes_A  M \to \mathsf{T}(A) \otimes_A \mathsf{S}_A(M) \\
    p_{\Omega(A)} \otimes_A p_M:  \mathsf{T}(A) \otimes_A \mathsf{S}_A(M) \to \Omega(A) \otimes_A  M
\end{gather*}
Note of course that $(p_{\Omega(A)} \otimes_A p_M) \circ (j_{\Omega(A)} \otimes_A j_M) = 1_{\Omega(A) \otimes_A M}$. On the other hand, the idempotent $(j_{\Omega(A)} \otimes_A j_M) \circ (p_{\Omega(A)} \otimes_A p_M)$ picks out the $ \Omega(A) \otimes_A  M$ part in $\mathsf{T}(A) \otimes_A \mathsf{S}_A(M)$. In other words, for $W \in \mathsf{T}(A) \otimes_A \mathsf{S}_A(M)$, if:
\[ \left( (j_{\Omega(A)} \otimes_A j_M) \circ (p_{\Omega(A)} \otimes_A p_M) \right) (W) = W\]
then $W \in \Omega(A) \otimes_A  M$. So we will show that when taking $W= \mathsf{H}(\mathsf{d}(m) )$, the above equality holds. To do so, we first observe that: 

\begin{lemma}\label{lift-jp} The following diagrams commute: 
\[     \xymatrixcolsep{5pc}\xymatrix{ \mathsf{T}(A) \ar[d]_-{p_{\Omega(A)}} \ar[r]^-{\mathsf{d}^\prime} & \mathsf{T}^2(A) \ar[d]^-{\ell_{A}} &  \mathsf{S}_A(M) \ar[d]_-{p_{M}} \ar[r]^-{\mathsf{d}} &  \mathsf{T}\left( \mathsf{S}_A(M) \right) \ar[d]^-{\lambda_M} \\
\Omega(A) \ar[r]_-{j_{\Omega(A)}} & \mathsf{T}(A) & M \ar[r]_-{j_M} & \mathsf{S}_M(A)   } \] 
\end{lemma}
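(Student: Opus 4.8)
The plan is to observe that each square in the statement asserts an equality of $k$-linear maps between $A$-modules: the vertical arrows $p_{\Omega(A)}, j_{\Omega(A)}$ (resp.\ $p_M, j_M$) are $A$-linear, the lifts $\ell_A$ (resp.\ $\lambda_M$) are $k$-algebra maps, and $\mathsf{d}^\prime\colon \mathsf{T}(A)\to\mathsf{T}^2(A)$ (resp.\ $\mathsf{d}\colon \mathsf{S}_A(M)\to\mathsf{T}(\mathsf{S}_A(M))$) is the universal $k$-derivation, i.e.\ the $k$-derivation sending the generator $a$ to $\mathsf{d}^\prime(a)$ and the generator $\mathsf{d}(a)$ to $\mathsf{d}^\prime\mathsf{d}(a)$ (resp.\ $a\mapsto\mathsf{d}(a)$, $m\mapsto\mathsf{d}(m)$). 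So it is enough to check the two identities on a $k$-linear spanning set. The first reduction I would make is that $\ell_A\circ\mathsf{d}^\prime$ is in fact $A$-linear: since $\mathsf{d}^\prime$ satisfies the Leibniz rule and $\ell_A(\mathsf{d}^\prime(a))=0$, $\ell_A(a)=a$ for $a\in A$, the computation $\ell_A(\mathsf{d}^\prime(a w))=\ell_A\big(a\,\mathsf{d}^\prime(w)+w\,\mathsf{d}^\prime(a)\big)=a\,\ell_A(\mathsf{d}^\prime(w))$ shows the leading $A$-scalar factors through; the same holds for $\lambda_M\circ\mathsf{d}$ using $\lambda_M(\mathsf{d}(a))=0$, $\lambda_M(a)=a$. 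On the other side, $j_{\Omega(A)}\circ p_{\Omega(A)}$ (resp.\ $j_M\circ p_M$) is by construction the $A$-linear idempotent on the graded $A$-algebra $\mathsf{T}(A)=\bigoplus_{n\ge 0}\mathsf{S}^n_A(\Omega(A))$ (resp.\ $\mathsf{S}_A(M)=\bigoplus_{n\ge 0}\mathsf{S}^n_A(M)$) projecting onto the degree-$1$ summand. Hence the lemma reduces to showing $\ell_A\circ\mathsf{d}^\prime$ (resp.\ $\lambda_M\circ\mathsf{d}$) equals that degree-$1$ projection, and by $A$-linearity this can be verified on the homogeneous $A$-module generators $\mathsf{d}(b_1)\cdots\mathsf{d}(b_n)$ (resp.\ $m_1\cdots m_n$), degree by degree.

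For the left square I would then run through the three cases. On $a\in A\subseteq\mathsf{T}(A)$ (degree $0$), $\ell_A(\mathsf{d}^\prime(a))$ is $\ell_A$ applied to the generator $\mathsf{d}^\prime(a)$, which is $0$, matching $p_{\Omega(A)}(a)=0$. On $\mathsf{d}(b)$ (degree $1$), $\mathsf{d}^\prime(\mathsf{d}(b))=\mathsf{d}^\prime\mathsf{d}(b)$ and $\ell_A(\mathsf{d}^\prime\mathsf{d}(b))=\mathsf{d}(b)$, matching that the projection is the identity on $\Omega(A)$. On $\mathsf{d}(b_1)\cdots\mathsf{d}(b_n)$ with $n\ge 2$, the Leibniz rule gives $\mathsf{d}^\prime\big(\mathsf{d}(b_1)\cdots\mathsf{d}(b_n)\big)=\sum_{i=1}^n \mathsf{d}(b_1)\cdots\mathsf{d}^\prime\mathsf{d}(b_i)\cdots\mathsf{d}(b_n)$, and applying the algebra map $\ell_A$ turns the $i$-th summand into $\mathsf{d}(b_i)\cdot\prod_{j\ne i}\ell_A(\mathsf{d}(b_j))=0$ because $n\ge 2$ forces a surviving factor $\ell_A(\mathsf{d}(b_j))=0$; so the composite kills all degrees $\ge 2$. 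Combining the three cases yields $\ell_A\circ\mathsf{d}^\prime=j_{\Omega(A)}\circ p_{\Omega(A)}$.

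The right square is the same calculation verbatim with $\mathsf{S}_A(M)$ for $\mathsf{T}(A)$, $M$ for $\Omega(A)$, the universal derivation $\mathsf{d}$ of $\mathsf{S}_A(M)$ for $\mathsf{d}^\prime$, and $\lambda_M$ for $\ell_A$, using $\lambda_M(a)=a$, $\lambda_M(m)=0$, $\lambda_M(\mathsf{d}(a))=0$, $\lambda_M(\mathsf{d}(m))=m$: degree $0$ is annihilated, degree $1$ is fixed, and in degree $\ge 2$ each Leibniz term retains a factor $\lambda_M(m_j)=0$. I do not expect a genuine obstacle here; the only points needing care are conceptual rather than computational — recognising that $\ell_A\circ\mathsf{d}^\prime$ and $\lambda_M\circ\mathsf{d}$ are $A$-linear (so that ``check on generators'' is legitimate even though $\mathsf{d}^\prime$ and $\mathsf{d}$ are only derivations, not algebra maps), and that $j\circ p$ is exactly the degree-$1$ projection — after which the Leibniz bookkeeping for degrees $\ge 2$ is routine.
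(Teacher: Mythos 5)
Your proof is correct, but it takes a genuinely different route from the paper's. The paper disposes of the lemma in two sentences: it observes that the left square is the special case $M=\Omega(A)$ of the right one (since $\mathsf{T}(A)=\mathsf{S}_A(\Omega(A))$, $\ell_A=\lambda_{\Omega(A)}$ and $\mathsf{d}^\prime=\mathsf{d}$ under that identification), and then cites the equivalence between modules and differential bundles from the earlier paper, specifically the fact that $M=\mathsf{im}(\lambda_M\circ\mathsf{d})$, which packages exactly the identity $\lambda_M\circ\mathsf{d}=j_M\circ p_M$. You instead prove the identity from scratch: you justify reduction to $A$-module generators by showing $\ell_A\circ\mathsf{d}^\prime$ and $\lambda_M\circ\mathsf{d}$ are $A$-linear (the key point being $\ell_A(\mathsf{d}^\prime(a))=0$, resp.\ $\lambda_M(\mathsf{d}(a))=0$, which kills the second Leibniz term), identify $j\circ p$ as the degree-one projection of the graded symmetric algebra, and then check degrees $0$, $1$, and $\geq 2$ directly, with the degree-$\geq 2$ terms dying because each Leibniz summand retains a factor $\ell_A(\mathsf{d}(b_j))=0$ (resp.\ $\lambda_M(m_j)=0$). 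Your calculations are all correct. What the paper's approach buys is brevity and a conceptual explanation (the lemma is a shadow of the module/differential-bundle equivalence); what yours buys is a self-contained, citation-free verification that also makes visible \emph{why} the composite is the degree-one projection, which is arguably more informative to a reader working through the explicit generator-level descriptions used throughout this section. The only expository difference of note is that you treat the two squares as parallel instances of one computation rather than deducing the left from the right, but that is a matter of presentation, not substance.
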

\begin{proof} Note that the diagram on the left is a special case of the diagram on the right. That the diagram on the right commutes is a direct consequence of the equivalence between modules and differential bundles \cite[Sec 4.12]{cruttwell2023differential}. Indeed, the equivalence tells us that $M = \mathsf{im}(\lambda_M \circ \mathsf{d})$ \cite[Lemma 4.13]{cruttwell2023differential}, which is precisely the same as saying that the diagram on the right commutes. 
\end{proof}

\begin{lemma} The following diagram commutes: 
\[     \xymatrixcolsep{3pc}\xymatrix{ M \ar[rr]^-{\mathsf{d}} \ar[d]_-{\mathsf{d}} && \mathsf{T}\left( \mathsf{S}_A(M) \right) \ar[dd]^-{ \mathsf{H}} \\
  \mathsf{T}\left( \mathsf{S}_A(M) \right) \ar[d]_-{ \mathsf{H}} \\
  \mathsf{T}(A) \otimes_A \mathsf{S}_A(M) \ar[dr]_-{p_{\Omega(A)} \otimes_A p_M~~} & &   \mathsf{T}(A) \otimes_A \mathsf{S}_A(M)\\
  & \Omega(A) \otimes_A  M \ar[ur]_-{~~~j_{\Omega(A)} \otimes_A j_M}  } \]
  In other words, $\mathsf{H}(\mathsf{d}(m) ) \in \Omega(A) \otimes_A M$. 
\end{lemma}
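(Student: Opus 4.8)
The plan is to pin down $W := \mathsf{H}(\mathsf{d}(m))$ by feeding the single element $\mathsf{d}^\prime\mathsf{d}(m) \in \mathsf{T}^2\bigl(\mathsf{S}_A(M)\bigr)$ into the horizontal connection axioms \textbf{[H.3]} and \textbf{[H.4]}. The point is that both $\ell_{\mathsf{S}_A(M)}$ and $\mathsf{T}(\lambda_M)\circ \mathsf{c}_{\mathsf{S}_A(M)}$ send $\mathsf{d}^\prime\mathsf{d}(m)$ back to $\mathsf{d}(m)$, so each axiom relates $W$ to itself after the right-hand ``degree-detecting'' maps are applied; \textbf{[H.3]} will confine $W$ to $\Omega(A)\otimes_A \mathsf{S}_A(M)$ and \textbf{[H.4]} will confine it to $\mathsf{T}(A)\otimes_A M$, and the two together give the claim.

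First I would fix a monomial expansion $W = \sum_i w_i \otimes_A v_i$ with $w_i \in \mathsf{T}(A)$ and $v_i \in \mathsf{S}_A(M)$ --- since every map involved is additive it suffices to argue termwise. Chasing $\mathsf{d}^\prime\mathsf{d}(m)$ around \textbf{[H.3]}: the lift leg sends it via $\ell_{\mathsf{S}_A(M)}(\mathsf{d}^\prime\mathsf{d}(m)) = \mathsf{d}(m)$ and then $\mathsf{H}$ to $W$. On the other leg, $\mathsf{T}(\mathsf{H})(\mathsf{d}^\prime\mathsf{d}(m)) = \mathsf{d}(W)$, the Leibniz isomorphism $\cong$ turns each $\mathsf{d}(w_i \otimes_A v_i)$ into $\mathsf{d}^\prime(w_i) \otimes_{\mathsf{T}(A)} v_i + w_i \otimes_{\mathsf{T}(A)} \mathsf{d}(v_i)$, and $\ell_A \otimes_{0_A} 0_{\mathsf{S}_A(M)}$ annihilates the second summand (as $0_{\mathsf{S}_A(M)}$ kills the positive-$\mathsf{d}$-degree element $\mathsf{d}(v_i)$) and sends the first to $\ell_A(\mathsf{d}^\prime(w_i)) \otimes_A v_i$, using that $0_{\mathsf{S}_A(M)}$ is the identity on the copy of $\mathsf{S}_A(M)$ sitting inside $\mathsf{T}(\mathsf{S}_A(M))$. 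By Lemma~\ref{lift-jp} (left square), $\ell_A \circ \mathsf{d}^\prime = j_{\Omega(A)} \circ p_{\Omega(A)}$ on $\mathsf{T}(A)$, so \textbf{[H.3]} yields $W = \bigl(j_{\Omega(A)} p_{\Omega(A)} \otimes_A 1_{\mathsf{S}_A(M)}\bigr)(W)$; in particular $W \in \Omega(A) \otimes_A \mathsf{S}_A(M)$.

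The second step runs in parallel with \textbf{[H.4]}. Its $\mathsf{c}$-then-$\mathsf{T}(\lambda_M)$ leg sends $\mathsf{d}^\prime\mathsf{d}(m)$ to $\mathsf{d}\bigl(\lambda_M(\mathsf{d}(m))\bigr) = \mathsf{d}(m)$ and then via $\mathsf{H}$ to $W$. On the other leg, after $\mathsf{T}(\mathsf{H})$ and $\cong$ as before, the map $0_{\mathsf{T}(A)} \otimes_{0_A} \lambda_M$ now annihilates $\mathsf{d}^\prime(w_i) \otimes v_i$ (since $0_{\mathsf{T}(A)}$ kills the positive-$\mathsf{d}^\prime$-degree element $\mathsf{d}^\prime(w_i)$) and sends $w_i \otimes \mathsf{d}(v_i)$ to $w_i \otimes_A \lambda_M(\mathsf{d}(v_i))$. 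By Lemma~\ref{lift-jp} (right square), $\lambda_M \circ \mathsf{d} = j_M \circ p_M$ on $\mathsf{S}_A(M)$, so \textbf{[H.4]} yields $W = \bigl(1_{\mathsf{T}(A)} \otimes_A j_M p_M\bigr)(W)$, i.e.\ $W \in \mathsf{T}(A) \otimes_A M$.

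Finally, the two idempotents $j_{\Omega(A)} p_{\Omega(A)} \otimes_A 1$ and $1 \otimes_A j_M p_M$ act on separate tensor factors, hence commute, and their composite is $(j_{\Omega(A)} \otimes_A j_M) \circ (p_{\Omega(A)} \otimes_A p_M)$; combining the two identities above gives $W = \bigl((j_{\Omega(A)} \otimes_A j_M) \circ (p_{\Omega(A)} \otimes_A p_M)\bigr)(W)$, which by the criterion recorded just before Lemma~\ref{lift-jp} is exactly $W = \mathsf{H}(\mathsf{d}(m)) \in \Omega(A) \otimes_A M$. I expect the only real friction to be the bookkeeping in the two diagram chases --- in particular keeping track of how $\mathsf{T}(A)$ and $\mathsf{S}_A(M)$ are embedded (via $\mathsf{p}$) in $\mathsf{T}^2(A)$ and $\mathsf{T}(\mathsf{S}_A(M))$ so that $0_{\mathsf{T}(A)}$ and $0_{\mathsf{S}_A(M)}$ restrict to identities there --- which is routine once $W$ is in monomial form.
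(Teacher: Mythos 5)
Your proposal is correct and follows essentially the same route as the paper: the paper's proof is exactly the two diagram chases you describe, pushing $\mathsf{d}^\prime\mathsf{d}(m)$ through \textbf{[H.3]} and \textbf{[H.4]}, using Lemma \ref{lift-jp} together with $0_{\mathsf{S}_A(M)} \circ \mathsf{d} = 0$ and $0_{\mathsf{T}(A)} \circ \mathsf{d}^\prime = 0$ to identify the resulting idempotents as $j_{\Omega(A)}p_{\Omega(A)} \otimes_A 1$ and $1 \otimes_A j_M p_M$, and then composing the two. Your version is just the element-level rendering of the paper's commutative diagrams, so there is nothing substantive to add.
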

\begin{proof} We first observe that the following diagram commutes: 
    \[     \xymatrixcolsep{5pc} \xymatrixrowsep{2.5pc}\xymatrix{ M \ar[rr]^-{\mathsf{d}} \ar[dr]^-{\mathsf{d}^\prime\mathsf{d}} \ar[d]_-{\mathsf{d}} & \ar@{}[d]|-{\text{Def. of $\ell$}} & \mathsf{T}\left( \mathsf{S}_A(M) \right) \ar[dddd]^-{ \mathsf{H}} \\
  \mathsf{T}\left( \mathsf{S}_A(M) \right) \ar@{}[dr]|-{\text{Def. of $\mathsf{T}(-)$}} \ar[r]_-{\mathsf{d}^\prime}  \ar[ddd]_-{ \mathsf{H}} & \mathsf{T}^2\left( \mathsf{S}_A(M) \right) \ar[d]|-{\mathsf{T}(H)} \ar[ur]_-{\ell_{\mathsf{S}_A(M)}}  \\
& \mathsf{T}\left( \mathsf{T}(A) \otimes_A \mathsf{S}_A(M) \right) \ar[d]^-{\cong}  \ar@{}[r]|-{\text{\textbf{[H.3]}}} &   \\
& \ar@{}[l]|(0.65){\text{Def. of $\cong$}}\mathsf{T}^2(A) \otimes_{\mathsf{T}(A)} \mathsf{T}\left( \mathsf{S}_A(M) \right) \ar@{}[dd]|-{(\star)}  \ar[dr]|-{\ell_A \otimes_{0_A} 0_{\mathsf{S}_A(M)}} & \\
  \mathsf{T}(A) \otimes_A \mathsf{S}_A(M) \ar@/^3pc/[uur]^-{\mathsf{d}} \ar@/_1pc/[ur]|-{\mathsf{d}^\prime \otimes_{\mathsf{p}_A} \mathsf{p}_{\mathsf{S}_A(M)} +  \mathsf{p}_{\mathsf{T}(A)} \otimes_{\mathsf{p}_A} \mathsf{d}} \ar[dr]|-{p_{\Omega(A)} \otimes_A 1_{\mathsf{S}_A(M)}}  & &   \mathsf{T}(A) \otimes_A \mathsf{S}_A(M)\\
  & \Omega(A) \otimes_A  \mathsf{S}_A(M) \ar[ur]|-{j_{\Omega(A)} \otimes_A 1_{\mathsf{S}_A(M)}}  } \]
where $(\star)$ commutes by combining Lemma \ref{lift-jp} and that by definition we also have that  $ 0_{\mathsf{S}_A(M)} \circ \mathsf{d} = 0$. Similarly, we also have that the following diagram commutes: 
      \[     \xymatrixcolsep{4pc} \xymatrixrowsep{2.5pc}\xymatrix{ M \ar[rrr]^-{\mathsf{d}} \ar[dr]^-{\mathsf{d}^\prime\mathsf{d}} \ar[d]_-{\mathsf{d}} & \ar@{}[d]|-{\text{Def. of $\ell$}} & & \mathsf{T}\left( \mathsf{S}_A(M) \right) \ar[dddd]^-{ \mathsf{H}} \\
  \mathsf{T}\left( \mathsf{S}_A(M) \right) \ar@{}[dr]|-{\text{Def. of $\mathsf{T}(-)$}} \ar[r]_-{\mathsf{d}^\prime}  \ar[ddd]_-{ \mathsf{H}} & \mathsf{T}^2\left( \mathsf{S}_A(M) \right) \ar[d]|-{\mathsf{T}(H)} \ar[r]_-{\mathsf{c}_{\mathsf{S}_A(M)}} & \mathsf{T}^2\left( \mathsf{S}_A(M) \right)  \ar[ur]_-{\mathsf{T}(\lambda_M)} \\
& \mathsf{T}\left( \mathsf{T}(A) \otimes_A \mathsf{S}_A(M) \right) \ar@{}[dl]^-{\text{Def. of $\cong$}} \ar[d]^-{\cong}  \ar@{}[rr]|-{\text{\textbf{[H.4]}}} & &  \\
& \mathsf{T}^2(A) \otimes_{\mathsf{T}(A)} \mathsf{T}\left( \mathsf{S}_A(M) \right) \ar@{}[dd]|-{(\star)}  \ar[drr]|-{0_{\mathsf{T}(A)} \otimes_{0_A} \lambda_M} & \\
  \mathsf{T}(A) \otimes_A \mathsf{S}_A(M) \ar@/^3pc/[uur]^-{\mathsf{d}} \ar@/_1pc/[ur]|-{\mathsf{d}^\prime \otimes_{\mathsf{p}_A} \mathsf{p}_{\mathsf{S}_A(M)} +  \mathsf{p}_{\mathsf{T}(A)} \otimes_{\mathsf{p}_A} \mathsf{d}} \ar[dr]|-{1_{\mathsf{T}(A)} \otimes_A p_M}  & & &   \mathsf{T}(A) \otimes_A \mathsf{S}_A(M)\\
  & \mathsf{T}(A) \otimes_A  M \ar[urr]|-{1_{\mathsf{T}(A)} \otimes_A j_M}  } \]
where here $(\star)$ commutes by combining Lemma \ref{lift-jp} and that by definition $ 0_{\mathsf{T}(A)} \circ \mathsf{d}^\prime = 0$. 

Then, combining both outer diagrams gives us that:
\[ (j_{\Omega(A)} \otimes_A j_M) \circ (p_{\Omega(A)} \otimes_A p_M) \circ \mathsf{H} \circ \mathsf{d} = \mathsf{H} \circ \mathsf{d}\]
So we conclude that $\mathsf{H}(\mathsf{d}(m) ) \in \Omega(A) \otimes_A M$ as desired. 
\end{proof}

As such, we may properly define $\nabla_{\mathsf{H}}: M \to \Omega(A) \otimes_A M$ as follows:
\begin{align}
    \nabla_{\mathsf{H}}(m) = \mathsf{H}(\mathsf{d}(m) )
\end{align}

\begin{proposition}\label{prop:H-nabla} $\nabla_{\mathsf{H}}: M \to \Omega(A) \otimes_A M$ is a module connection on $M$.
\end{proposition}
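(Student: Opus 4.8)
The plan is to check directly that $\nabla_{\mathsf{H}} = \mathsf{H} \circ \mathsf{d}$ satisfies the two defining conditions of a module connection from Definition~\ref{def:module-connection}: it is $k$-linear, and it obeys the Leibniz rule \eqref{eq:connection-leibniz}. That $\nabla_{\mathsf{H}}$ is well-typed, i.e. lands in $\Omega(A) \otimes_A M$ rather than merely in $\mathsf{T}(A) \otimes_A \mathsf{S}_A(M)$, has already been secured by the preceding lemma, so these algebraic identities are all that remain.

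For $k$-linearity, I would observe that the assignment $m \mapsto \mathsf{d}(m)$, viewed as a map $M \to \mathsf{T}(\mathsf{S}_A(M))$, is additive by the relation $\mathsf{d}(x + y) = \mathsf{d}(x) + \mathsf{d}(y)$ holding in $\mathsf{T}(\mathsf{S}_A(M))$, and is $k$-linear because $\mathsf{d}$ is the universal $k$-derivation and hence annihilates the image of $k$ (so $\mathsf{d}(\kappa m) = \kappa\,\mathsf{d}(m) + m\,\mathsf{d}(\kappa) = \kappa\,\mathsf{d}(m)$ for $\kappa \in k$). Post-composing with $\mathsf{H}$, which is a $k$-algebra morphism and in particular $k$-linear, shows $\nabla_{\mathsf{H}}$ is $k$-linear.

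For the Leibniz rule, the starting point is the identity $\mathsf{d}(am) = a\,\mathsf{d}(m) + m\,\mathsf{d}(a)$, which holds in $\mathsf{T}(\mathsf{S}_A(M))$ since it is the product rule applied to the elements $a, m \in \mathsf{S}_A(M)$. Applying the algebra morphism $\mathsf{H}$ gives $\nabla_{\mathsf{H}}(am) = \mathsf{H}(a)\,\mathsf{H}(\mathsf{d}(m)) + \mathsf{H}(m)\,\mathsf{H}(\mathsf{d}(a))$. I would then pin down the three remaining values using the retract axioms: \textbf{[H.1]} (namely $\mathsf{H} \circ \mathsf{T}(\mathsf{q}_M) = \iota_0$) forces $\mathsf{H}(a) = a \otimes_A 1$ and $\mathsf{H}(\mathsf{d}(a)) = \mathsf{d}(a) \otimes_A 1$, while \textbf{[H.2]} (namely $\mathsf{H} \circ \mathsf{p}_{\mathsf{S}_A(M)} = \iota_1$) forces $\mathsf{H}(m) = 1 \otimes_A m$. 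Substituting, and using that multiplication in $\mathsf{T}(A) \otimes_A \mathsf{S}_A(M)$ restricts on $\Omega(A) \otimes_A M$ to the usual $A$-action, i.e. $(a \otimes_A 1)\cdot \mathsf{H}(\mathsf{d}(m)) = a\,\nabla_{\mathsf{H}}(m)$ and $(1 \otimes_A m)(\mathsf{d}(a) \otimes_A 1) = \mathsf{d}(a) \otimes_A m$, yields exactly $\nabla_{\mathsf{H}}(am) = a\,\nabla_{\mathsf{H}}(m) + \mathsf{d}(a) \otimes_A m$.

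The argument is essentially formal once the preceding lemma is in hand; the only point requiring a little care is the bookkeeping of the $A$-module structure on $\Omega(A) \otimes_A M$ as it sits inside $\mathsf{T}(A) \otimes_A \mathsf{S}_A(M)$, that is, confirming that $(a \otimes_A 1)\cdot(-)$ in the ambient algebra coincides with the scalar action $a\cdot(-)$ appearing in the Leibniz rule. This is immediate from the defining formula $a \cdot (\omega \otimes_A m') = (a\omega) \otimes_A m'$.
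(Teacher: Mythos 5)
Your proposal is correct and follows essentially the same route as the paper: the Leibniz rule is obtained by applying the algebra morphism $\mathsf{H}$ to the identity $\mathsf{d}(am) = a\,\mathsf{d}(m) + m\,\mathsf{d}(a)$ and then pinning down $\mathsf{H}(a)$, $\mathsf{H}(\mathsf{d}(a))$, and $\mathsf{H}(m)$ via \textbf{[H.1]} and \textbf{[H.2]}, exactly as the paper does. The only difference is that you spell out the $k$-linearity check and the compatibility of the ambient multiplication with the $A$-action on $\Omega(A) \otimes_A M$, which the paper leaves implicit.
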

\begin{proof} We must check that $\nabla_{\mathsf{H}}$ satisfies the Leibniz rule. First note that \textbf{[H.1]} tells us that on generators: 
\begin{gather*}
    \mathsf{H}(a) = a \otimes_A 1 \qquad \mathsf{H}(\mathsf{d}(a)) = \mathsf{d}(a) \otimes_A 1
    \end{gather*}
While \textbf{[H.2]} tells us that on generators:
    \begin{gather*}
    \mathsf{H}(a) = 1 \otimes_A a \qquad \mathsf{H}(m) = 1 \otimes_A m
    \end{gather*}
Then using these identities, as well as the fact that $\mathsf{d}$ satisfies the Leibniz rule and that $\mathsf{H}$ preserves multiplication, we compute in $ \mathsf{T}(A) \otimes_A \mathsf{S}_A(M)$ that:  
    \begin{gather*}
       \mathsf{H}(\mathsf{d}(am)) = \mathsf{H}\left( a \mathsf{d}(m) + \mathsf{d}(a) m \right) = \mathsf{H}(a) \mathsf{H}(\mathsf{d}(m)) + \mathsf{H}(\mathsf{d}(a)) \mathsf{H}(m) \\
        = (a \otimes_A 1) \mathsf{H}(\mathsf{d}(m)) + (\mathsf{d}(a) \otimes_A 1)(1 \otimes_A m) = a \mathsf{H}(\mathsf{d}(m)) + \mathsf{d}(a) \otimes_A m 
    \end{gather*}
As such it follows that $\nabla_{\mathsf{H}}(am) = a \nabla_{\mathsf{H}}(m) + \mathsf{d}(a) \otimes_A m$, as desired.  
\end{proof}

Lastly, it remains to show that these constructions are inverses of each other, giving us our desired bijective correspondence. 

\begin{theorem}\label{thm:affine_connection_correspondence} There is a bijective correspondence between module connections on $M$ and tangent category connections on $\mathsf{q}_M$. 
\end{theorem}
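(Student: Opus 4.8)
The plan is to reduce the statement to a bijection between module connections on $M$ and \emph{horizontal} connections on $\mathsf{q}_M$. By Proposition \ref{prop:hor-ver}, every horizontal connection $\mathsf{H}$ on $\mathsf{q}_M$ underlies a unique tangent category connection $(\mathsf{K},\mathsf{H})$, and every tangent category connection arises in this way from its horizontal part; by Proposition \ref{prop:ver-hor} the analogous statement holds for the effective vertical part. Hence tangent category connections on $\mathsf{q}_M$ are in natural bijection with horizontal connections on $\mathsf{q}_M$, and it is enough to show that the two constructions already given, $\nabla \mapsto \mathsf{H}_\nabla$ and $\mathsf{H} \mapsto \nabla_{\mathsf{H}}$, are mutually inverse. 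Everything needed to know that these are well defined — that $\mathsf{H}_\nabla$ is a horizontal connection, that its induced $\mathsf{K}_\nabla$ is effective, and that $\nabla_{\mathsf{H}}$ is a module connection valued in $\Omega(A) \otimes_A M$ — has already been established in the preceding propositions and lemmas.

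First I would verify that $\nabla \mapsto \mathsf{H}_\nabla \mapsto \nabla_{\mathsf{H}_\nabla}$ is the identity. This is immediate: for $m \in M$ one has $\nabla_{\mathsf{H}_\nabla}(m) = \mathsf{H}_\nabla(\mathsf{d}(m)) = \nabla(m)$ directly from the two definitions.

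Next I would verify that $\mathsf{H} \mapsto \nabla_{\mathsf{H}} \mapsto \mathsf{H}_{\nabla_{\mathsf{H}}}$ is the identity. Both $\mathsf{H}$ and $\mathsf{H}_{\nabla_{\mathsf{H}}}$ are $k$-algebra morphisms out of $\mathsf{T}(\mathsf{S}_A(M))$, which is generated as a $k$-algebra by the elements $a$, $m$, $\mathsf{d}(a)$, $\mathsf{d}(m)$ (for $a \in A$, $m \in M$), so it suffices to check that the two morphisms agree on these four families of generators. On $\mathsf{d}(m)$ they agree by construction, since $\mathsf{H}_{\nabla_{\mathsf{H}}}(\mathsf{d}(m)) = \nabla_{\mathsf{H}}(m) = \mathsf{H}(\mathsf{d}(m))$. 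On $a$, $m$, and $\mathsf{d}(a)$, the morphism $\mathsf{H}_{\nabla_{\mathsf{H}}}$ sends them to $a \otimes_A 1$, $1 \otimes_A m$, and $\mathsf{d}(a) \otimes_A 1$ respectively; but those are exactly the values of $\mathsf{H}$ on these generators, because \textbf{[H.1]} forces $\mathsf{H}(a) = \iota_0(a) = a \otimes_A 1$ and $\mathsf{H}(\mathsf{d}(a)) = \iota_0(\mathsf{d}(a)) = \mathsf{d}(a) \otimes_A 1$, while \textbf{[H.2]} forces $\mathsf{H}(m) = \iota_1(m) = 1 \otimes_A m$ — these are precisely the identities already recorded in the proof of Proposition \ref{prop:H-nabla}. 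Hence $\mathsf{H}_{\nabla_{\mathsf{H}}} = \mathsf{H}$, so the two constructions are mutually inverse bijections.

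I do not expect a genuine obstacle: the conceptual weight of the correspondence sits in the earlier results. The one point requiring a little care is the reduction in the first paragraph — invoking the uniqueness clauses of Propositions \ref{prop:hor-ver} and \ref{prop:ver-hor} so that matching up the horizontal connections automatically matches up the entire tangent category connection (in particular $\mathsf{K}_{\nabla_{\mathsf{H}}} = \mathsf{K}$), with no independent verification needed on the vertical side. Once that is in place, the round-trip identities are forced on generators and the theorem follows.
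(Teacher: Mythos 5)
Your proposal is correct and follows essentially the same route as the paper's proof: reduce to showing that $\nabla \mapsto \mathsf{H}_\nabla$ and $\mathsf{H} \mapsto \nabla_{\mathsf{H}}$ are mutually inverse (justified by the uniqueness in Propositions \ref{prop:hor-ver} and \ref{prop:ver-hor}), then verify the two round trips on the generators $a$, $m$, $\mathsf{d}(a)$, $\mathsf{d}(m)$, using that \textbf{[H.1]} and \textbf{[H.2]} force the values of any horizontal connection on $a$, $m$, and $\mathsf{d}(a)$. No gaps; this matches the paper's argument step for step.
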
 
\begin{proof} Since tangent category connections are completely determined by their horizontal connection, it suffices to show that the above constructions between module connections on $M$ and horizontal connections on $\mathsf{q}_M$ are inverses to each other, that is, we must show that $\mathsf{H}_{\nabla_{\mathsf{H}}} =  \mathsf{H}$ and $ \nabla_{\mathsf{H}_\nabla} = \nabla$. 

So starting with a module connection $\nabla: M \to \Omega(A) \otimes_A M$, using the definition of $\mathsf{H}_\nabla$ and $\nabla_{\mathsf{H}_\nabla}$, we easily see that: 
    \begin{align*}
        \nabla_{\mathsf{H}_\nabla}(m) = \mathsf{H}_\nabla(\mathsf{d}(m) ) = \nabla(m) 
    \end{align*}
so $ \nabla_{\mathsf{H}_\nabla} = \nabla$. 

On the other hand, starting with a horizontal connection $\mathsf{H}: \mathsf{T}\left( \mathsf{S}_A(M) \right) \to \mathsf{T}(A) \otimes_A \mathsf{S}_A(M)$, recall that in the proof of Prop \ref{prop:H-nabla}, we explained that: 
\begin{gather*}
    \mathsf{H}_\nabla(a) = a \otimes_A 1 \qquad \mathsf{H}_\nabla(\mathsf{d}(a)) = \mathsf{d}(a) \otimes_A 1 \qquad \mathsf{H}_\nabla(m) = 1 \otimes_A m
    \end{gather*}
    Then using these, as well as by definition of $\nabla_{\mathsf{H}}$ and $ \mathsf{H}_{\nabla_{\mathsf{H}}}$, on generators we get that: 
    \begin{gather*}
        \mathsf{H}_{\nabla_{\mathsf{H}}}(a) = a \otimes_A 1 = \mathsf{H}(a) \\
        \mathsf{H}_{\nabla_{\mathsf{H}}}(m) = 1 \otimes_A m = \mathsf{H}(m) \\
        \mathsf{H}_{\nabla_{\mathsf{H}}}(\mathsf{d}(a)) = \mathsf{d}(a) \otimes_A 1 =  \mathsf{H}(\mathsf{d}(a)) \\ 
        \mathsf{H}_{\nabla_{\mathsf{H}}}(\mathsf{d}(m)) = \nabla_{\mathsf{H}}(m) =   \mathsf{H}(\mathsf{d}(m))
    \end{gather*}
 So $\mathsf{H}_{\nabla_{\mathsf{H}}} =  \mathsf{H}$. 
\end{proof}

Recall that an affine connection on $A$ is a connection on its tangent bundle. The module associated to the tangent bundle is $\Omega(A)$, so in other words $\mathsf{p}_A = \mathsf{q}_{\Omega(A)}$. So from the above theorem, we immediately get that: 

\begin{corollary} There is a bijective correspondence between module connections on $\Omega(A)$ and affine connections on $A$.
\end{corollary}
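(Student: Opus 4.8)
The plan is to read this off immediately from Theorem~\ref{thm:affine_connection_correspondence} applied to the module $M = \Omega(A)$, once we make explicit the identification $\mathsf{p}_A = \mathsf{q}_{\Omega(A)}$ of differential bundles in $(k\text{-}\mathsf{CALG}^{op}, \mathbb{T})$. In other words, the whole corollary reduces to recognizing that the tangent bundle of $A$, viewed as a differential bundle, is precisely the differential bundle $\mathsf{q}_{\Omega(A)}$ built from the module of Kähler differentials.

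Concretely, the steps I would carry out are the following. First, recall from Section~\ref{sec:AFF-tangcat} that the tangent bundle functor on $k\text{-}\mathsf{CALG}^{op}$ is defined by $\mathsf{T}(A) = \mathsf{S}_A(\Omega(A))$, so the total space of $\mathsf{q}_{\Omega(A)}$ is literally $\mathsf{T}(A)$ and the projection $\mathsf{q}_{\Omega(A)} : A \to \mathsf{S}_A(\Omega(A))$, $a \mapsto a$, is exactly $\mathsf{p}_A$. Second, compare the remaining differential bundle structure on $\mathsf{q}_{\Omega(A)}$ — the sum $\sigma_{\Omega(A)}$, zero $\mathsf{z}_{\Omega(A)}$, negative $\iota_{\Omega(A)}$, and lift $\lambda_{\Omega(A)}$, all given on generators in Section~\ref{sec:AFF-tangcat} — with the differential bundle structure $(+_A, 0_A, -_A, \ell_A)$ that the tangent bundle carries by \cite[Ex 2.4.(ii)]{cockett2018differential}, under the evident matching of generators ($\mathsf{d}(a) \in \Omega(A) \subset \mathsf{S}_A(\Omega(A))$ with $\mathsf{d}(a) \in \mathsf{T}(A)$, and at the second tangent level $\mathsf{d}(\mathsf{d}(a))$ with $\mathsf{d}^\prime\mathsf{d}(a)$ and the base-level copy of $\mathsf{d}(a)$ with $\mathsf{d}^\prime(a)$). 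This is a routine check on generators; moreover it is exactly the statement that the module associated to the tangent bundle under the equivalence $\mathsf{DBUN}(k\text{-}\mathsf{CALG}^{op}, \mathbb{T}) \simeq \mathsf{MOD}^{op}$ of \cite{cruttwell2023differential} is $\Omega(A)$, so it may simply be cited. Third, with $\mathsf{p}_A = \mathsf{q}_{\Omega(A)}$ established, conclude: an affine connection on $A$ is by definition a tangent category connection on $\mathsf{p}_A = \mathsf{q}_{\Omega(A)}$, and Theorem~\ref{thm:affine_connection_correspondence} with $M = \Omega(A)$ gives a bijection between these and module connections on $\Omega(A)$.

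The only point needing care is the identification of the two lifts, $\lambda_{\Omega(A)} = \ell_A$: the lift of the tangent bundle differential bundle is $\ell_A$ directly, whereas for a general $\mathsf{q}_M$ the lift $\lambda_M$ is prescribed by its own formula on generators, so one must confirm the two formulas agree under the generator dictionary above. This is immediate from the explicit formulas, and in any case it is already recorded in \cite{cruttwell2023differential}; hence there is no genuine obstacle, and once the identification is in place Theorem~\ref{thm:affine_connection_correspondence} does all the work.
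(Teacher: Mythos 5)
Your proposal is correct and is essentially the paper's own argument: the paper likewise observes that the module associated to the tangent bundle is $\Omega(A)$, so that $\mathsf{p}_A = \mathsf{q}_{\Omega(A)}$, and then invokes Theorem~\ref{thm:affine_connection_correspondence} with $M = \Omega(A)$. The only difference is that you spell out the generator-level check of the identification (which the paper delegates to a citation of \cite{cruttwell2023differential}), and your dictionary of generators is the right one.
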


\subsection{Connection Examples}\label{sec:examples}

The literature on connections in algebraic geometry has surprisingly few concrete examples. In this section, we briefly consider a few basic examples. In particular, we will use Ex \ref{ex:connections_on_plane} to concretely illustrate the differences and similarities between tangent category connections and module connections.  We will also use these examples as a point of entry into a later discussion of applying tangent category theory to obtain results about connections on modules. 

As to not overload notation, throughout the examples, we will denote elements of $\Omega(A) \otimes_A M$ simply using $\otimes$ instead of $\otimes_A$. 

\begin{example}\label{ex:connections_on_plane}
For any commutative ring $R$, let $A^2_R$ be the affine plane over $R$; as an object of $R\text{-}\mathsf{CALG}^{op}$, this is represented by the polynomial ring $R[x_1, x_2]$.  A natural choice of module over $A^2_R$ is its module of Kähler differentials $\Omega(A^2_R)$, which is the free $A^2_R$-module generated by $\mathsf{d}(x_1)$ and $\mathsf{d}(x_2)$.  Then, a connection on this module is a map of type:  
\[ \nabla: \Omega(A^2_R) \to \Omega(A^2_R) \otimes_{A^2_R} \Omega(A^2_R) \]
which is entirely determined by where it sends $\mathsf{d}(x_1)$ and $\mathsf{d}(x_2)$. Thus $\nabla$ is determined by a collection of eight polynomials $\Gamma^{i}_{jl}(x_1, x_2)$ ($i, j, l \in \{0,1\}$), which determine how $\mathsf{d}(x_1)$ and $\mathsf{d}(x_2)$ get mapped:  
    
    \[ \mathsf{d}(x_1) \mapsto \Gamma^1_{11} \mathsf{d}(x_1) \otimes \mathsf{d}(x_1) + \Gamma^1_{12} \mathsf{d}(x_1) \otimes \mathsf{d}(x_2) + \Gamma^1_{21} \mathsf{d}(x_2) \otimes \mathsf{d}(x_1) + \Gamma^1_{22} \mathsf{d}(x_2) \otimes \mathsf{d}(x_2) \]
    \[ \mathsf{d}(x_2) \mapsto \Gamma^2_{11} \mathsf{d}(x_1) \otimes \mathsf{d}(x_1) + \Gamma^2_{12} \mathsf{d}(x_1) \otimes \mathsf{d}(x_2) + \Gamma^2_{21} \mathsf{d}(x_2) \otimes \mathsf{d}(x_1) + \Gamma^2_{22} \mathsf{d}(x_2) \otimes \mathsf{d}(x_2) \]
By analogy with differential geometry, one could think of these polynomials as the ``Christoffel symbols'' of the connection. As evinced by the proofs in the previous section, the corresponding horizontal connection is essentially the same data, just written in a slightly different way. So, the corresponding map 
    \[ H_{\nabla}: T^2(A^2_R) \to T(A^2_R) \otimes_{A^2_R} T(A^2_R) \]
is defined on generators as follows: 
    \[ x_i \mapsto x_i (1 \otimes 1) \]
    \[ \mathsf{d}(x_i) \mapsto \mathsf{d}(x_i) \otimes 1 \]
    \[ \mathsf{d}^\prime(x_i) \mapsto 1 \otimes \mathsf{d}(x_i) \]
    \[ \mathsf{d}^\prime\mathsf{d}(x_1) \mapsto \Gamma^1_{11} \mathsf{d}(x_1) \otimes \mathsf{d}(x_1) + \Gamma^1_{12} \mathsf{d}(x_1) \otimes \mathsf{d}(x_2) + \Gamma^1_{21} \mathsf{d}(x_2) \otimes \mathsf{d}(x_1) + \Gamma^1_{22} \mathsf{d}(x_2) \otimes \mathsf{d}(x_2)\]
    \[ \mathsf{d}^\prime\mathsf{d}(x_2) \mapsto \Gamma^2_{11} \mathsf{d}(x_1) \otimes \mathsf{d}(x_1) + \Gamma^2_{12} \mathsf{d}(x_1) \otimes \mathsf{d}(x_2) + \Gamma^2_{21} \mathsf{d}(x_2) \otimes \mathsf{d}(x_1) + \Gamma^2_{22} \mathsf{d}(x_2) \otimes \mathsf{d}(x_2) \]
The corresponding (effective) vertical connection again has essentially the same information, just with all polynomials negated. So, the vertical connection is the map of type: 
    \[ K_{\nabla}: T(A^2_R) \to T^2(A^2_R) \]
defined on generators as follows: 
    \[ x_i \mapsto x_i \]
    \[ \mathsf{d}(x_1) \mapsto \mathsf{d}^\prime\mathsf{d}(x_1) - \left( \Gamma^1_{11} \mathsf{d}^\prime(x_1)\mathsf{d}(x_1) + \Gamma^1_{12} \mathsf{d}^\prime(x_1)\mathsf{d}(x_2) + \Gamma^1_{21} \mathsf{d}^\prime(x_2)\mathsf{d}(x_1) + \Gamma^1_{22} \mathsf{d}^\prime(x_2)\mathsf{d}(x_2) \right) \]
    \[ \mathsf{d}(x_2) \mapsto \mathsf{d}^\prime\mathsf{d}(x_2) - \left( \Gamma^2_{11} \mathsf{d}^\prime(x_1)\mathsf{d}(x_1) + \Gamma^2_{12} \mathsf{d}^\prime(x_1)\mathsf{d}(x_2) + \Gamma^2_{21} \mathsf{d}^\prime(x_2)\mathsf{d}(x_1) + \Gamma^2_{22} \mathsf{d}^\prime(x_2)\mathsf{d}(x_2) \right) \]
Note that there is a canonical choice of connection: simply choose all $\Gamma^{i}_{jk}$ to be $0$.   We will revisit this idea in Sec \ref{sec:applying_theory}, particularly in Cor \ref{cor:smooth_have_connections}.  However, it's worth noting that choosing all $\Gamma^{i}_{jk}$ to be $0$ \textbf{does not} mean that the connection is identically $0$. Indeed, even if by setting all $\Gamma^{i}_{jk}=0$, giving us $\nabla(\mathsf{d}(x_i)) =0$, the Leibniz rule still gives us non-zero terms. For example, 
    \[ \nabla(x_1^3\mathsf{d}(x_1)) = \mathsf{d}(x_1^3) \otimes \mathsf{d}(x_1) + x_1^3\nabla(\mathsf{d}(x_1)) = 3x_1^2 \mathsf{d}(x_1) \otimes \mathsf{d}(x_1) \]
\end{example}

\begin{example}\label{ex:connections_on_affine_n_space}
The above example easily generalizes to the affine $n$-space $A^n_R := R[x_1, \hdots, x_n]$. In this case, a module connection $\nabla$ on its module of Kähler differentials consists of a choice of $n \times n \times n$ polynomials $\Gamma^{i}_{jl}$ with associated connection given by
    \[ \mathsf{d}(x_i) \mapsto \sum_j \sum_l \Gamma^{i}_{jl} \mathsf{d}(x)_j \otimes \mathsf{d}(x)_R \]
\end{example}

\begin{example}\label{ex:affine_circle_connection}
Now consider the ``affine circle'' over a commutative ring $R$, that is, the commutative $R$-algebra $A$ defined as: 
    \[ A := R[x,y]/\<x^2 + y^2 - 1\> \]
Again, we will consider connections on its module of Kähler differentials, which can be described as the module: 
    \[ \Omega(A) = (A\mathsf{d}(x) + A\mathsf{d}(y))/\<2x\mathsf{d}(x) + 2y\mathsf{d}(y)\>\]
As in the previous example, a module connection 
    \[ \nabla: \Omega(A) \to \Omega(A) \otimes_A \Omega(A) \]
is entirely determined by where it sends $\mathsf{d}(x)$ and $\mathsf{d}(y)$.  However, in this case, not all choices are possible because of the quotient relation.  For example, if we choose $\nabla(\mathsf{d}(x)) := 0$ and $\nabla(\mathsf{d}(y)) := 0$, this is not well-defined, since:
    \[ \nabla(2x\mathsf{d}(x) + 2y\mathsf{d}(y)) = 2\mathsf{d}(x) \otimes \mathsf{d}(x) + 2 \mathsf{d}(y) \otimes \mathsf{d}(y) \]
which is not equal to $0$ in $\Omega(A) \otimes_A \Omega(A)$. On the other hand, there is a ``canonical'' choice which does work. Define $\nabla$ on generators as:
    \[ \mathsf{d}(x) \mapsto -x \mathsf{d}(x) \otimes \mathsf{d}(x) - x \mathsf{d}(y) \otimes \mathsf{d}(y) \]
    \[ \mathsf{d}(y) \mapsto -y \mathsf{d}(x) \otimes \mathsf{d}(x) - y \mathsf{d}(y) \otimes \mathsf{d}(y) \]
It is straightforward to check this gives a well-defined map and, hence, a module connection on $\Omega(A)$.  The corresponding horizontal and vertical connections are as in Ex \ref{ex:connections_on_plane}. However, it is not immediately clear where this ``canonical'' connection on the circle comes from. We will revisit this idea in Ex \ref{ex:affine_circle_revisited} and show how it can be derived from a much simpler connection.  
\end{example}

\begin{example}\label{ex:higher_spheres}
The above example easily generalizes to the ``affine $n$-sphere''; that is, the commutative $R$-algebra $S^n$ defined as follows: 
    \[ S^n := R[x_1, x_2, \hdots, x_n]/\<x_1^2 + x_2^2 + \hdots, x_n^2 -1\>\]
There is again a canonical connection on its module of Kähler differentials given by: 
    \[ \mathsf{d}(x_i) \mapsto \sum_{j=1}^n -x_i \mathsf{d}(x)_j \otimes \mathsf{d}(x)_j \]
\end{example}

\begin{example} Here is an example of a connection on the module of Kähler differentials on an elliptic curve. Let $R = \mathbb{Q}$ (the rational numbers), and consider the elliptic curve
    \[ \mathbb{Q}[x,y]/\<y^2 - x^3 - 1\> \]
A connection on its module of Kähler differentials is given by 
    \[ \mathsf{d}(x) \mapsto -2x^2 \mathsf{d}(x) \otimes \mathsf{d}(x) - \frac{2}{3}x \mathsf{d}(y) \otimes \mathsf{d}(y)\]
    \[ \mathsf{d}(y) \mapsto 3xy \mathsf{d}(x) \otimes \mathsf{d}(x) + y \mathsf{d}(y) \otimes \mathsf{d}(y) \]
\end{example}

\begin{example}\label{ex:conn_an} Free modules naturally have a choice of connection. So for a commutative $R$-algebra $A$, consider the free $A$-module $A^n$. First, observe that, 
    \[ \Omega(A) \otimes_A A^n \cong \Omega(A)^n \]
It then follows that the natural map $A^n \to \Omega(A)^n$ given by
    \[ (a_1, a_2, \hdots, a_n) \mapsto (\mathsf{d}(a_1), \mathsf{d}(a_2), \hdots, \mathsf{d}(a_n)) \]
induces a module connection on $A^n$. We will revisit this example from a different perspective in the discussion after Lemma \ref{lemma:pullback_connections}. 
\end{example}

We end this section with an example of a module that has no (module) connections. 

\begin{example}
Let $R$ be any commutative ring of characteristic not equal to $2$, and let $A$ be the ``fat point'' (i.e. the ring of dual numbers of $R$): 
    \[ A := R[x]/\<x^2\> \]
We claim that there are no connections on the module of Kähler differentials $\Omega(A)$. Indeed, in this case, $\Omega(A)$ can be described as: 
    \[ \Omega(A) = A\mathsf{d}(x)/\<2x\mathsf{d}(x)\>. \]
Then, a module connection 
    \[ \nabla: \Omega(A) \to \Omega(A) \otimes_A \Omega(A) \]
must be given by: 
    \[ \nabla(\mathsf{d}(x)) = p(x) \mathsf{d}(x) \otimes \mathsf{d}(x) \]
for some $p(x) \in A$.  However, we must have that $\nabla(2x\mathsf{d}(x)) = 0$, but by the Leibniz rule
    \[ \nabla(2x\mathsf{d}(x)) = 2\mathsf{d}(x) \otimes \mathsf{d}(x) + 2x p(x) \mathsf{d}(x) \otimes \mathsf{d}(x) = 2\mathsf{d}(x) \otimes \mathsf{d}(x) \]
which is not equal to $0$ in $\Omega(A) \otimes_A \Omega(A)$. Thus, no connection exists on this module.  
\end{example}

\subsection{Curvature and Torsion}

There is a natural notion of curvature for a connection in the algebraic context. In this section, we will show the relationship between curvature in this sense and curvature in the tangent category sense. While the two concepts are indeed related, there is a small subtlety that differentiates them slightly in the general setting. That said, in a setting where we can divide by $2$, then the two notions of curvature do indeed correspond. We will also discuss the notion of torsion of a connection on the module of Kähler differentials and explain how it relates to the tangent category notion of torsion in a similar way. 

We begin by reviewing the notion of curvature for a connection on a module, see \cite[Chap XIX, Ex.13.(b)]{lang2012algebra} and \cite[Def 8.2.4]{mangiarotti2000connections} for more details. For this section, we again fix a commutative ring $R$, a commutative $R$-algebra $A$, and an $A$-module $M$. Let $\Omega^2(A)$ be the wedge product over $A$ of $\Omega(A)$ with itself, so $\Omega^2(A) = \Omega(A) \wedge_A \Omega(A)$, and consider the canonical map $\omega: \Omega(A) \otimes_A \Omega(A) \to \Omega^2(A)$ defined as: 
\begin{align}\omega\left( x \mathsf{d}(a) \otimes_A y \mathsf{d}(b) \right) = xy \left( \mathsf{d}(a) \wedge_A \mathsf{d}(b) \right)
\end{align}

\begin{definition}\label{def:module-curvature} The \textbf{curvature} \cite[Def 8.2.4]{mangiarotti2000connections} of a module connection $\nabla: M \to \Omega(A) \otimes_A M$ is the $R$-linear map $\nabla^2: M \to \Omega^2(A) \otimes_A M$ defined as the following composite:
\begin{align}
\begin{array}[c]{c}
\nabla^2 \end{array}  := \begin{array}[c]{c} \xymatrixrowsep{1pc}\xymatrixcolsep{3pc}\xymatrix{M \ar[r]^-{ \nabla} & \Omega(A) \otimes_A M \ar[r]^-{ 1_{\Omega(A)} \otimes_A \nabla } & \Omega(A) \otimes_A \Omega(A)  \otimes_A M \ar[r]^-{  \omega \otimes_A 1_M} & \Omega^2(A) \otimes_A M}\end{array}
\end{align}
\end{definition}

On the other hand, given a module connection $\nabla: M \to \Omega(A) \otimes_A M$, the curvature of the induced tangent category connection $(\mathsf{K}_\nabla, \mathsf{H}_\nabla)$ is the $R$-algebra morphism $\mathsf{C}_{\mathsf{K}_\nabla}:  \mathsf{S}_A(M) \to \mathsf{T}^2(\mathsf{S}_A(M))$ defined as the dual of (\ref{eq:curvature-K}), so:
\[ \mathsf{C}_{\mathsf{K}_\nabla} \colon = \mathsf{c}_{\mathsf{S}_A(M)} \circ \mathsf{T}\left( \mathsf{K}_\nabla \right) \circ \mathsf{K}_\nabla  -_{\mathsf{q}_M} \mathsf{T}\left( \mathsf{K}_\nabla \right) \circ \mathsf{K}_\nabla  \]
Then $(\mathsf{K}_\nabla, \mathsf{H}_\nabla)$ is flat if its curvature is given on the generators by: 
\begin{align*}
   \mathsf{C}_{\mathsf{K}_\nabla}(a) = a &&    \mathsf{C}_{\mathsf{K}_\nabla}(m) = 0 
\end{align*}

We will now explain how the curvature of a tangent category connection on $\mathsf{q}_M$ is completely determined by the curvature of its associated module connection on $M$. To do so, consider the map $\psi: \Omega^2(A) \otimes_A M \to \mathsf{T}^2(\mathsf{S}_A(M))$ defined as: 
\[ \psi\left( (\mathsf{d}(a) \wedge \mathsf{d}(b)) \otimes_A m \right) = m \mathsf{d}(a)\mathsf{d}^\prime(b) - m \mathsf{d}^\prime(a)\mathsf{d}(b)   \]
We will use this map to express $\mathsf{C}_{\mathsf{K}_\nabla}$ in terms of $\nabla^2$. As such, it also follows that if the curvature of a module connection on $M$ is zero, then its associated tangent category connection on $\mathsf{q}_M$ is flat.

\begin{lemma}\label{lemma:curvature1} On generators, $\mathsf{C}_{\mathsf{K}_\nabla}$ satisfies the following: 
\begin{align*}
    \mathsf{C}_{\mathsf{K}_\nabla}(a) = a && \mathsf{C}_{\mathsf{K}_\nabla}(m) = \psi\left( \nabla^2(m) \right) 
\end{align*} 
Moreover, if $\nabla^2 = 0$ then $(\mathsf{K}_\nabla, \mathsf{H}_\nabla)$ is flat. 
\end{lemma}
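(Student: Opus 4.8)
The plan is to verify the formula for $\mathsf{C}_{\mathsf{K}_\nabla}$ by a direct computation on the two kinds of generators of $\mathsf{S}_A(M)$, namely $a \in A$ and $m \in M$, using the explicit formula
$\mathsf{C}_{\mathsf{K}_\nabla} = \mathsf{c}_{\mathsf{S}_A(M)} \circ \mathsf{T}(\mathsf{K}_\nabla) \circ \mathsf{K}_\nabla \, -_{\mathsf{q}_M} \, \mathsf{T}(\mathsf{K}_\nabla) \circ \mathsf{K}_\nabla$
together with the already-computed description of $\mathsf{K}_\nabla$ on generators: $\mathsf{K}_\nabla(a) = a$ and $\mathsf{K}_\nabla(m) = \mathsf{d}(m) - \nabla_{\mathsf{K}}(m)$, where $\nabla_{\mathsf{K}}(m) = \sum_i m_i \mathsf{d}(a_i)$ for $\nabla(m) = \sum_i \mathsf{d}(a_i)\otimes_A m_i$. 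Since $\mathsf{K}_\nabla$ is a $k$-algebra morphism fixing $A$, the subtraction $-_{\mathsf{q}_M}$ is the one described earlier for maps agreeing on $A$, so on the $a$-generators both composites return $a$ and the difference is $a$, giving $\mathsf{C}_{\mathsf{K}_\nabla}(a) = a$ immediately.

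The substantive step is the $m$-generator. First I would compute $\mathsf{T}(\mathsf{K}_\nabla)\bigl(\mathsf{K}_\nabla(m)\bigr)$: applying $\mathsf{K}_\nabla$ gives $\mathsf{d}(m) - \sum_i m_i\,\mathsf{d}(a_i)$ in $\mathsf{T}(\mathsf{S}_A(M))$, and then $\mathsf{T}(\mathsf{K}_\nabla)$ acts by the rule $\mathsf{T}(\mathsf{K}_\nabla)(x) = \mathsf{K}_\nabla(x)$, $\mathsf{T}(\mathsf{K}_\nabla)(\mathsf{d}(x)) = \mathsf{d}(\mathsf{K}_\nabla(x))$ on generators. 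This turns $\mathsf{d}(m)$ into $\mathsf{d}^\prime\bigl(\mathsf{d}(m) - \sum_i m_i\mathsf{d}(a_i)\bigr) = \mathsf{d}^\prime\mathsf{d}(m) - \sum_i \bigl(\mathsf{d}^\prime(m_i)\mathsf{d}(a_i) + m_i \mathsf{d}^\prime\mathsf{d}(a_i)\bigr)$, and turns each $m_i\,\mathsf{d}(a_i)$ into $\bigl(\mathsf{d}(m_i) - \sum_j (m_i)_j \mathsf{d}((a_i)_j)\bigr)\mathsf{d}(a_i)$ — here I would write $\nabla(m_i) = \sum_j \mathsf{d}((a_i)_j)\otimes (m_i)_j$. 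Assembling, $\mathsf{T}(\mathsf{K}_\nabla)\mathsf{K}_\nabla(m)$ is $\mathsf{d}^\prime\mathsf{d}(m)$ plus lower-order correction terms. Then I apply $\mathsf{c}_{\mathsf{S}_A(M)}$, which swaps $\mathsf{d}\leftrightarrow\mathsf{d}^\prime$ and fixes $\mathsf{d}^\prime\mathsf{d}$, to the same expression, and subtract (over $\mathsf{q}_M$) the un-flipped version. The $\mathsf{d}^\prime\mathsf{d}(m)$ terms cancel, the $m_i\mathsf{d}^\prime\mathsf{d}(a_i)$-type terms cancel, and what survives is a combination of terms of the form $\mathsf{d}(\ast)\mathsf{d}^\prime(\ast) - \mathsf{d}^\prime(\ast)\mathsf{d}(\ast)$, which I would then match against $\psi(\nabla^2(m))$: unwinding Definition \ref{def:module-curvature}, $\nabla^2(m) = \sum_{i,j}\bigl(\mathsf{d}((a_i)_j)\wedge\mathsf{d}(a_i)\bigr)\otimes (m_i)_j$ (possibly after applying the Leibniz rule to $1_{\Omega(A)}\otimes\nabla$ and then $\omega$), so $\psi(\nabla^2(m)) = \sum_{i,j}(m_i)_j\bigl(\mathsf{d}((a_i)_j)\mathsf{d}^\prime(a_i) - \mathsf{d}^\prime((a_i)_j)\mathsf{d}(a_i)\bigr)$; a careful sign and index bookkeeping should show the leftover terms coincide with this.

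The last sentence follows formally: if $\nabla^2 = 0$ then $\psi(\nabla^2(m)) = 0$, so $\mathsf{C}_{\mathsf{K}_\nabla}$ sends $a\mapsto a$ and $m\mapsto 0$, which is exactly the flatness condition recorded before the lemma; hence $(\mathsf{K}_\nabla,\mathsf{H}_\nabla)$ is flat. The main obstacle I anticipate is the bookkeeping in the $m$-generator calculation — correctly tracking which correction terms cancel under $\mathsf{c}_{\mathsf{S}_A(M)}$ versus survive, keeping the signs straight through the definition of $-_{\mathsf{q}_M}$, and confirming that the surviving antisymmetric combination is precisely $\psi$ applied to the correct unwinding of $\nabla^2$ (in particular checking that the term $\mathsf{d}^\prime(m_i)\mathsf{d}(a_i)$ coming from differentiating $\mathsf{d}(m)$ and the term $\mathsf{d}(m_i)\mathsf{d}(a_i)$-flipped coming from differentiating $m_i\mathsf{d}(a_i)$ combine correctly, rather than double-counting). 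None of the individual steps is hard, but the calculation is long enough that sign errors are the real risk.
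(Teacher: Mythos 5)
Your proposal is correct and follows essentially the same route as the paper: expand $\mathsf{T}(\mathsf{K}_\nabla)\circ\mathsf{K}_\nabla$ on the generators $a$ and $m$, apply the canonical flip $\mathsf{c}_{\mathsf{S}_A(M)}$, and observe that the four $\mathsf{c}$-invariant terms (including the pair $\mathsf{d}(a_i)\mathsf{d}^\prime(m_i)$ and $\mathsf{d}(m_i)\mathsf{d}^\prime(a_i)$, which swap under the flip) cancel in the subtraction $-_{\mathsf{q}_M}$, leaving exactly $\psi(\nabla^2(m))$. The only detail to fix when writing it out is the order of the wedge in $\nabla^2(m)$, which is $\sum_{i,j}\left(\mathsf{d}(a_i)\wedge\mathsf{d}(a_{i,j})\right)\otimes_A m_{i,j}$ with the outer factor first --- precisely the sign bookkeeping you flagged, and it does come out right.
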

\begin{proof} For the calculations of this proof, we set: 
\begin{align}\label{nabla-m-mi}
    \nabla(m) = \sum^n_{i=1}\mathsf{d}(a_i)  \otimes_A m_i && \nabla(m_i) = \sum^{n_i}_{j=1} \mathsf{d}(a_{i,j}) \otimes_A m_{i,j}
\end{align}
As such, $\nabla^2(m)$ is worked out to be: 
\begin{gather*}
m \xmapsto{\nabla} \nabla(m) = \sum^n_{i=1} \mathsf{d}(a_i)  \otimes_A m_i \\
\xmapsto{1_{\Omega(A)} \otimes_A \nabla} \sum^n_{i=1} \sum^{n_i}_{j=1}  \mathsf{d}(a_i) \otimes_A  \mathsf{d}(a_{i,j}) \otimes_A m_{i,j} \\
\xmapsto{\omega \otimes_A 1_M} \sum^n_{i=1} \sum^{n_i}_{j=1} \left( \mathsf{d}(a_i) \wedge \mathsf{d}(a_{i,j}) \right) \otimes_A m_{i,j} 
\end{gather*}
Therefore, we have that: 
\begin{gather*}
    \nabla^2(m) = \sum^n_{i=1} \sum^{n_i}_{j=1} \left( \mathsf{d}(a_i) \wedge \mathsf{d}(a_{i,j}) \right) \otimes_A m_{i,j} \\
    \psi\left( \nabla^2(m) \right) = \sum^n_{i=1} \sum^{n_i}_{j=1} m_{i,j} \mathsf{d}(a_i) \mathsf{d}^\prime(a_{i,j}) -  m_{i,j} \mathsf{d}^\prime(a_i) \mathsf{d}(a_{i,j})
\end{gather*}

On the other hand, let's consider the curvature of $\mathsf{K}_\nabla$. To do so, let's work out $\mathsf{T}(\mathsf{K}_\nabla) \circ \mathsf{K}_\nabla$ on generators: 

\begin{gather*}
    a \xmapsto{\mathsf{K}_\nabla} a  \xmapsto{\mathsf{T}(\mathsf{K}_\nabla)} \mathsf{K}_\nabla(a) = a 
\end{gather*}

\begin{gather*}
    m \xmapsto{\mathsf{K}_\nabla} \mathsf{d}(m) - \nabla_\mathsf{K}(m) = \mathsf{d}(m) - \sum^n_{i=1} m_i \mathsf{d}(a_i) \\
    \xmapsto{\mathsf{T}(\mathsf{K}_\nabla)} \mathsf{d}^\prime\left( \mathsf{K}_\nabla(m) \right) - \sum^n_{i=1} \mathsf{K}_\nabla(m_i) \mathsf{d}^\prime\left( \mathsf{K}_\nabla(a_i) \right)  \\
    = \mathsf{d}^\prime\left( \mathsf{d}(m) - \nabla_\mathsf{K}(m) \right) - \sum^n_{i=1} \left( \mathsf{d}(m_i) - \nabla_\mathsf{K}(m_i) \right) \mathsf{d}^\prime(a_i)   \\
    = \mathsf{d}^\prime \mathsf{d}(m) - \sum^n_{i=1} \mathsf{d}^\prime\left( m_i \mathsf{d}(a_i) \right)  - \sum^n_{i=1} \mathsf{d}(m_i) \mathsf{d}^\prime(a_i)  +  \sum^n_{i=1} \sum^{n_i}_{j=1} m_{i,j} \mathsf{d}(a_{i,j})  \mathsf{d}^\prime\left( a_i \right)  \\
 = \underbrace{\mathsf{d}^\prime \mathsf{d}(m)}_{\circled{1}} - \underbrace{\sum^n_{i=1} m_i \mathsf{d}^\prime\mathsf{d}(a_i)}_{\circled{2}} - \underbrace{\sum^n_{i=1}   \mathsf{d}(a_i) \mathsf{d}^\prime(m_i) }_{\circled{3}} - \underbrace{\sum^n_{i=1} \mathsf{d}(m_i) \mathsf{d}^\prime(a_i)}_{\circled{4}} +  \sum^n_{i=1} \sum^{n_i}_{j=1} m_{i,j} \mathsf{d}(a_{i,j})  \mathsf{d}^\prime\left( a_i \right) 
\end{gather*}
Then on generators $\mathsf{c}_{\mathsf{S}_A(M)}\circ \mathsf{T}(\mathsf{K}_\nabla) \circ \mathsf{K}_\nabla$ gives us: 
\begin{gather*}
    a \xmapsto{\mathsf{T}(\mathsf{K}_\nabla)\circ \mathsf{K}_\nabla} a   \xmapsto{\mathsf{c}_{\mathsf{S}_A(M)}} a 
\end{gather*}

\begin{gather*}
    m \xmapsto{\mathsf{T}(\mathsf{K}_\nabla)\circ \mathsf{K}_\nabla} \\
    \mathsf{d}^\prime \mathsf{d}(m) - \sum^n_{i=1} m_i \mathsf{d}^\prime\mathsf{d}(a_i) - \sum^n_{i=1}   \mathsf{d}(a_i) \mathsf{d}^\prime(m_i)  - \sum^n_{i=1} \mathsf{d}(m_i) \mathsf{d}^\prime(a_i)  +  \sum^n_{i=1} \sum^{n_i}_{j=1} m_{i,j} \mathsf{d}(a_{i,j})  \mathsf{d}^\prime\left( a_i \right) \\
    \xmapsto{\mathsf{c}_{\mathsf{S}_A(M)}}\underbrace{\mathsf{d}^\prime \mathsf{d}(m)}_{\circled{1}} - \underbrace{\sum^n_{i=1} m_i \mathsf{d}^\prime\mathsf{d}(a_i)}_{\circled{2}} - \underbrace{\sum^n_{i=1}   \mathsf{d}^\prime(a_i) \mathsf{d}(m_i)}_{\circled{4}}  - \underbrace{\sum^n_{i=1} \mathsf{d}^\prime(m_i) \mathsf{d}(a_i)}_{\circled{3}} +  \sum^n_{i=1} \sum^{n_i}_{j=1} m_{i,j} \mathsf{d}^\prime(a_{i,j})  \mathsf{d}\left( a_i \right)
\end{gather*}

Note the same four terms in $\mathsf{c}_{\mathsf{S}_A(M)}\circ \mathsf{T}(\mathsf{K}_\nabla) \circ \mathsf{K}_\nabla$ and $\mathsf{T}(\mathsf{K}_\nabla) \circ \mathsf{K}_\nabla$. Therefore, their subtraction (in the differential bundle sense) is easily computed out on the $m$ generator to be: 
\begin{gather*}
    \mathsf{C}_{\mathsf{K}_\nabla}(m) = \mathsf{c}_{\mathsf{S}_A(M)}\left( \mathsf{T}(\mathsf{K}_\nabla)\left( \mathsf{K}_\nabla(m) \right) \right) - \mathsf{T}(\mathsf{K}_\nabla)\left( \mathsf{K}_\nabla(m) \right) \\
    =  \sum^n_{i=1} \sum^{n_i}_{j=1} m_{i,j} \mathsf{d}^\prime(a_{i,j})  \mathsf{d}\left( a_i \right) -m_{i,j} \mathsf{d}(a_{i,j})  \mathsf{d}^\prime\left( a_i \right)  
\end{gather*}
So we conclude that $\mathsf{C}_{\mathsf{K}_\nabla}(a) = a$ and $\mathsf{C}_{\mathsf{K}_\nabla}(m) = \psi\left( \nabla^2(m) \right)$ as desired. Then if $\nabla^2 =0$, we have that $\mathsf{C}_{\mathsf{K}_\nabla}(a) = a$ and $\mathsf{C}_{\mathsf{K}_\nabla}(m) = 0$, which says that $(\mathsf{K}_\nabla, \mathsf{H}_\nabla)$ is flat.  
\end{proof}

Conversely, expressing $\nabla^2$ in terms of $\mathsf{C}_{\mathsf{K}_\nabla}$ is usually only possible up to a factor of $2$. Indeed, ideally, one would like to have a retract of $\psi$. However, this is not possible in general. Instead, the best one can do in general is define the map $\phi: \mathsf{T}^2(\mathsf{S}_A(M)) \to \Omega^2(A) \otimes_A M$ which is defined as mapping monomials of the form $m \mathsf{d}(a) \mathsf{d}^\prime(b)$ to:
\[ \phi\left( m \mathsf{d}(a) \mathsf{d}^\prime(b) \right) =  \left(\mathsf{d}(a) \wedge \mathsf{d}(b) \right) \otimes_A m   \]
and mapping monomials of other forms to $0$, and then extend by linearity. 

\begin{lemma}\label{lemma:curvature2} The following equality holds: 
\[ 2 \nabla^2(m) = \phi\left( \mathsf{C}_{\mathsf{K}_\nabla} (m) \right) \]
\end{lemma}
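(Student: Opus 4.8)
The plan is to leverage Lemma \ref{lemma:curvature1}, which already identifies $\mathsf{C}_{\mathsf{K}_\nabla}(m) = \psi\left( \nabla^2(m) \right)$. Thus the claimed identity $2\nabla^2(m) = \phi\left( \mathsf{C}_{\mathsf{K}_\nabla}(m) \right)$ reduces to verifying the single equality
\[ \phi \circ \psi = 2 \cdot 1_{\Omega^2(A) \otimes_A M}, \]
i.e. that $\phi$ is ``twice a retract'' of $\psi$. Since $\Omega^2(A) \otimes_A M$ is generated by elements of the form $\left( \mathsf{d}(a) \wedge \mathsf{d}(b) \right) \otimes_A m$, it suffices to evaluate $\phi \circ \psi$ on such a generator and extend by linearity.

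First I would recall that by definition $\psi\left( \left( \mathsf{d}(a) \wedge \mathsf{d}(b) \right) \otimes_A m \right) = m \mathsf{d}(a)\mathsf{d}^\prime(b) - m \mathsf{d}^\prime(a)\mathsf{d}(b)$, and then apply $\phi$ term by term. The monomial $m\mathsf{d}(a)\mathsf{d}^\prime(b)$ is already in the canonical form, so $\phi$ sends it to $\left( \mathsf{d}(a) \wedge \mathsf{d}(b) \right) \otimes_A m$. For the second monomial, the key observation is that, since $\mathsf{S}_A(M)$ and hence $\mathsf{T}^2(\mathsf{S}_A(M))$ is commutative, $m\mathsf{d}^\prime(a)\mathsf{d}(b) = m\mathsf{d}(b)\mathsf{d}^\prime(a)$; this too is of the canonical form, so $\phi$ sends it to $\left( \mathsf{d}(b) \wedge \mathsf{d}(a) \right) \otimes_A m = -\left( \mathsf{d}(a) \wedge \mathsf{d}(b) \right) \otimes_A m$ by antisymmetry of the wedge. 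Combining the two contributions gives $\phi\left( \psi\left( \left( \mathsf{d}(a) \wedge \mathsf{d}(b) \right) \otimes_A m \right) \right) = 2\left( \mathsf{d}(a) \wedge \mathsf{d}(b) \right) \otimes_A m$, which is exactly $\phi \circ \psi = 2 \cdot 1$. To make this concrete at the level of the explicit formula, I would use the notation of (\ref{nabla-m-mi}): by the proof of Lemma \ref{lemma:curvature1}, $\mathsf{C}_{\mathsf{K}_\nabla}(m) = \sum^n_{i=1}\sum^{n_i}_{j=1} \left( m_{i,j} \mathsf{d}(a_i)\mathsf{d}^\prime(a_{i,j}) - m_{i,j} \mathsf{d}^\prime(a_i)\mathsf{d}(a_{i,j}) \right)$, and applying $\phi$ as above — rewriting $m_{i,j}\mathsf{d}^\prime(a_i)\mathsf{d}(a_{i,j}) = m_{i,j}\mathsf{d}(a_{i,j})\mathsf{d}^\prime(a_i)$ — yields $2\sum^n_{i=1}\sum^{n_i}_{j=1} \left( \mathsf{d}(a_i) \wedge \mathsf{d}(a_{i,j}) \right) \otimes_A m_{i,j} = 2\nabla^2(m)$, since that same proof computed $\nabla^2(m) = \sum^n_{i=1}\sum^{n_i}_{j=1} \left( \mathsf{d}(a_i) \wedge \mathsf{d}(a_{i,j}) \right) \otimes_A m_{i,j}$.

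The only real subtlety — and the reason the factor is $2$ and not $0$ — is the bookkeeping in the second term: one must not be misled by $\phi$'s prescription of killing ``monomials of other forms'' into concluding that $m\mathsf{d}^\prime(a)\mathsf{d}(b)$ is annihilated. Because the ambient algebra is commutative, this monomial is literally equal to the canonical-form monomial $m\mathsf{d}(b)\mathsf{d}^\prime(a)$, and it is precisely the sign picked up from reordering the wedge factors that combines with the contribution of the first term to double it rather than cancel it. No further well-definedness concerns arise beyond those already granted in the definitions of $\psi$ and $\phi$.
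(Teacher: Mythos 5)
Your proposal is correct and follows essentially the same route as the paper: reduce to showing $\phi \circ \psi = 2 \cdot 1$ via Lemma \ref{lemma:curvature1}, then verify this on generators using the antisymmetry of $\wedge$ and the fact that the second monomial $m\mathsf{d}^\prime(a)\mathsf{d}(b)$ is, by commutativity, the canonical-form monomial $m\mathsf{d}(b)\mathsf{d}^\prime(a)$ rather than one of the forms $\phi$ kills. Your closing remark on this last bookkeeping point is exactly the subtlety the paper's computation relies on, just made explicit.
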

\begin{proof} Recall that $\mathsf{d}(a) \wedge \mathsf{d}(b)  = - \mathsf{d}(b) \wedge \mathsf{d}(a)$. Then we get that: 
\begin{gather*}
(\mathsf{d}(a) \wedge \mathsf{d}(b)) \otimes_A m  \xmapsto{\psi} m \mathsf{d}(a)\mathsf{d}^\prime(b) - m \mathsf{d}^\prime(a)\mathsf{d}(b) \\
\xmapsto{\phi} \left(\mathsf{d}(a) \wedge \mathsf{d}(b) \right) \otimes_A m  - \left(\mathsf{d}(b) \wedge \mathsf{d}(a) \right) \otimes_A m  \\
= \left(\mathsf{d}(a) \wedge \mathsf{d}(b) \right) \otimes_A m  +\left(\mathsf{d}(a) \wedge \mathsf{d}(b) \right) \otimes_A m  \\ 
= 2 \left( \left(\mathsf{d}(a) \wedge \mathsf{d}(b) \right) \otimes_A m \right)
\end{gather*}
So $\phi(\psi(x)) = 2 x$. Now by Lemma \ref{lemma:curvature1}, $\mathsf{C}_{\mathsf{K}_\nabla}(m) = \psi\left( \nabla^2(m) \right)$. So post-composing both sides by $\phi$ gives us that $2 \nabla^2(m) = \phi\left( \mathsf{C}_{\mathsf{K}_\nabla} (m) \right)$. 
\end{proof}

As we see in the proof, the factor of $2$ comes from the fact that $\phi\left( m \mathsf{d}(a) \mathsf{d}^\prime(b) \right) = \left(\mathsf{d}(a) \wedge \mathsf{d}(b) \right) \otimes_A m = - \left(\mathsf{d}(b) \wedge \mathsf{d}(a) \right) \otimes_A m = - \phi\left( m \mathsf{d}^\prime(b) \mathsf{d}^\prime(a) \right)$. Thus essentially, this factor of 2 appears from the fact that we are going from a commutative setting to an anticommutative setting. If we are in a setting where $2$ is invertible, then the curvature of a module connection on $M$ is determined by the curvature of its associated connection on $\mathsf{q}_M$. Moreover, in such a setting, we also get that the curvature of a module connection on $M$ is zero if and only if its associated connection on $\mathsf{q}_M$ is flat. 

\begin{corollary}\label{cor:curvatures_match} If $2$ is a unit in $A$, then the following equality holds: 
\[ \nabla^2(m) = \frac{1}{2}\phi\left( \mathsf{C}_{\mathsf{K}_\nabla} (m) \right) \]
Furthermore, $\nabla^2 = 0$ if and only if $(\mathsf{K}_\nabla, \mathsf{H}_\nabla)$ is flat.
\end{corollary}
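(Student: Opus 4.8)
The plan is to deduce this corollary immediately from Lemmas \ref{lemma:curvature1} and \ref{lemma:curvature2}, which have already done all the computational work. For the displayed equality, I would start from the identity $2\nabla^2(m) = \phi\left( \mathsf{C}_{\mathsf{K}_\nabla}(m) \right)$ of Lemma \ref{lemma:curvature2}. Since $\Omega^2(A) \otimes_A M$ is an $A$-module and $2$ is assumed to be a unit in $A$, one may multiply both sides by $\frac{1}{2} \in A$ to obtain $\nabla^2(m) = \frac{1}{2}\phi\left( \mathsf{C}_{\mathsf{K}_\nabla}(m) \right)$. That is the whole argument for the first claim; the hypothesis that $2$ is a unit is precisely what licenses this division.

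For the biconditional, the forward direction is already contained in Lemma \ref{lemma:curvature1}: if $\nabla^2 = 0$ then $\mathsf{C}_{\mathsf{K}_\nabla}(m) = \psi\left( \nabla^2(m) \right) = \psi(0) = 0$ for all $m \in M$, while $\mathsf{C}_{\mathsf{K}_\nabla}(a) = a$ holds unconditionally, so $(\mathsf{K}_\nabla, \mathsf{H}_\nabla)$ is flat by the flatness criterion recorded just before Lemma \ref{lemma:curvature1} (note that this direction uses no assumption on $2$). For the converse, suppose $(\mathsf{K}_\nabla, \mathsf{H}_\nabla)$ is flat, i.e. $\mathsf{C}_{\mathsf{K}_\nabla}(m) = 0$ for all $m$. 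Applying $\phi$ and using the displayed equality just established, $\nabla^2(m) = \frac{1}{2}\phi(0) = 0$, so $\nabla^2 = 0$. This is the only place where invertibility of $2$ is genuinely needed, which is consistent with the remark that in the general setting the two notions only agree up to a factor of $2$.

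There is essentially no obstacle here; the only points worth a moment's care are that $\phi$ and $\psi$ are $A$-linear maps between modules (so scaling by $\frac{1}{2}$ passes through them as expected, and $\frac{1}{2}\phi$ is then a genuine retract of $\psi$ rather than merely a ``half-retract''), and that the flatness condition for $(\mathsf{K}_\nabla, \mathsf{H}_\nabla)$ unwinds exactly to the pair of equations $\mathsf{C}_{\mathsf{K}_\nabla}(a) = a$ and $\mathsf{C}_{\mathsf{K}_\nabla}(m) = 0$, both of which are already available from the earlier lemmas.
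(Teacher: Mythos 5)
Your proposal is correct and follows exactly the route the paper intends: the corollary is stated without a separate proof precisely because it is an immediate consequence of Lemma \ref{lemma:curvature1} (for the forward implication and the flatness criterion) and Lemma \ref{lemma:curvature2} (dividing $2\nabla^2(m) = \phi\left(\mathsf{C}_{\mathsf{K}_\nabla}(m)\right)$ by the unit $2$). Your added observations about where invertibility of $2$ is and is not needed are accurate but not required.
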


We now discuss the analogue of torsion in the tangent category sense in the algebraic framework. Recall that torsion was defined for affine tangent category connections, which we now know correspond to module connections on $\Omega(A)$. So here we introduce the natural notion of torsion for a connection of type $\nabla: \Omega(A) \to \Omega(A) \otimes_A \Omega(A)$. To the best of our knowledge, the notion of torsion of a module connection on the module of Kähler differentials has not been previously defined\footnote{Note: this is not the same as the usual notion of torsion of a module.}. 

\begin{definition}\label{def:module-torsion} The \textbf{torsion} of a connection $\nabla: \Omega(A) \to \Omega(A) \otimes_A \Omega(A)$ is the $R$-linear map $\widehat{\nabla}: \Omega(A) \to \Omega^2(A)$ defined as the following composite: 
    \begin{align}
\begin{array}[c]{c}
\widehat{\nabla} \end{array}  := \begin{array}[c]{c} \xymatrixrowsep{1pc}\xymatrixcolsep{2.5pc}\xymatrix{\Omega(A) \ar[r]^-{ \nabla} & \Omega(A) \otimes_A \Omega(A) \ar[r]^-{\omega} & \Omega^2(A)  }\end{array}
\end{align}
\end{definition}

On the other hand, given a module connection $\nabla: \Omega(A) \to \Omega(A) \otimes_A \Omega(A)$, the torsion of the induced affine connection $(\mathsf{K}_\nabla, \mathsf{H}_\nabla)$ is the $R$-algebra morphism $\mathsf{C}_{\mathsf{K}_\nabla}: \mathsf{T}(A) \to \mathsf{T}^2(A)$ defined as the dual of (\ref{eq:torsion-K}) or equivalently as in (\ref{eq:torsion-H}): 
\[ \mathsf{V}_{\mathsf{K}_\nabla} \colon = \mathsf{c}_{A} \circ \mathsf{K}_\nabla  -_{\mathsf{p}_A} \mathsf{K}_\nabla = \lbrace \mathsf{V}^\flat_{\mathsf{K}_\nabla} \rbrace \]
Then $(\mathsf{K}_\nabla, \mathsf{H}_\nabla)$ is torsion-free if its torsion is given on the generators by: 
\begin{align*}
   \mathsf{V}_{\mathsf{K}_\nabla}(a) = a &&    \mathsf{V}_{\mathsf{K}_\nabla}(\mathsf{d}(a)) = 0
\end{align*}

We will now explain how the torsion of an affine connection on $A$ is completely determined by the torsion of its associated connection on $\Omega(A)$. To do so, consider the map $\widehat{\psi}: \Omega^2(A) \to \mathsf{T}^2(A)$ defined as: 
\[ \widehat{\psi}(a \mathsf{d}(a) \wedge \mathsf{d}(c) ) = a \mathsf{d}(a)\mathsf{d}^\prime(c) - a \mathsf{d}^\prime(a)\mathsf{d}(c) \]
We will use this map to express $\mathsf{V}_{\mathsf{K}_\nabla}$ in terms of $\widehat{\nabla}$. As such, it also follows that if the torsion on a connection on $\Omega(A)$ is zero, then its associated affine connection on $A$ is torsion free. 

\begin{lemma}\label{lemma:torsion1} On generators, $\mathsf{V}_{\mathsf{K}_\nabla}$ satisfies the following: 
\begin{align*}
    \mathsf{V}_{\mathsf{K}_\nabla}(a) = a && \mathsf{V}_{\mathsf{K}_\nabla}(\mathsf{d}(a)) = \widehat{\psi}\left(  \widehat{\nabla}(\mathsf{d}(a))  \right) 
\end{align*}  
Moreover, if $\widehat{\nabla} =0$, then $(\mathsf{K}_\nabla, \mathsf{H}_\nabla)$ is torsion-free. 
\end{lemma}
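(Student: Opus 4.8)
The plan is to specialize the general dictionary of Section \ref{sec:tan_cat_connections_in_aff} to the module $M = \Omega(A)$, for which $\mathsf{S}_A(\Omega(A)) = \mathsf{T}(A)$, $\mathsf{q}_{\Omega(A)} = \mathsf{p}_A$, and $\mathsf{T}(\mathsf{S}_A(M)) = \mathsf{T}^2(A)$, and then to compute $\mathsf{V}_{\mathsf{K}_\nabla} = \mathsf{c}_A \circ \mathsf{K}_\nabla -_{\mathsf{p}_A} \mathsf{K}_\nabla$ on generators, following the same strategy as the proof of Lemma \ref{lemma:curvature1} but with one fewer application of $\nabla$. First I would record the $M = \Omega(A)$ instance of the formula for $\mathsf{K}_\nabla$ from just before Thm \ref{thm:affine_connection_correspondence}: writing $\nabla(\mathsf{d}(a)) = \sum_i \mathsf{d}(a_i) \otimes_A m_i$ with $m_i \in \Omega(A)$, the affine vertical connection $\mathsf{K}_\nabla \colon \mathsf{T}(A) \to \mathsf{T}^2(A)$ is determined on generators by $\mathsf{K}_\nabla(a) = a$ and $\mathsf{K}_\nabla(\mathsf{d}(a)) = \mathsf{d}^\prime\mathsf{d}(a) - \nabla_\mathsf{K}(\mathsf{d}(a))$, where $\nabla_\mathsf{K}(\mathsf{d}(a)) = \sum_i m_i\, \mathsf{d}^\prime(a_i)$ --- here the $\mathsf{d}(a_i)$ that pass through $\mathsf{U}_{\mathsf{q}_M}$ become ``primed'', while the $\mathsf{d}(c)$'s occurring inside each $m_i \in \Omega(A) \subset \mathsf{T}(A) \subset \mathsf{T}^2(A)$ stay unprimed. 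This matches the affine vertical connection written out in Ex \ref{ex:connections_on_plane}.

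Since $\mathsf{T}(A)$ is the free $A$-algebra on the set $\{\mathsf{d}(a) \mid a \in A\}$, a $k$-algebra morphism out of $\mathsf{T}(A)$ is determined by where it sends each $a$ and each $\mathsf{d}(a)$, so it is enough to evaluate $\mathsf{V}_{\mathsf{K}_\nabla}$ on these. On $a \in A$ both $\mathsf{c}_A \circ \mathsf{K}_\nabla$ and $\mathsf{K}_\nabla$ return $a$, and since $-_{\mathsf{p}_A}$ (as spelled out before Thm \ref{thm:affine_connection_correspondence}) leaves the common value on $A$ alone, $\mathsf{V}_{\mathsf{K}_\nabla}(a) = a$. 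On $\mathsf{d}(a)$, I would apply $\mathsf{c}_A$ to $\mathsf{K}_\nabla(\mathsf{d}(a))$ using $\mathsf{c}_A(\mathsf{d}^\prime\mathsf{d}(a)) = \mathsf{d}^\prime\mathsf{d}(a)$, $\mathsf{c}_A(\mathsf{d}(c)) = \mathsf{d}^\prime(c)$, $\mathsf{c}_A(\mathsf{d}^\prime(c)) = \mathsf{d}(c)$, and then subtract $\mathsf{K}_\nabla(\mathsf{d}(a))$ via the explicit form of $-_{\mathsf{p}_A}$. The $\mathsf{d}^\prime\mathsf{d}(a)$ contributions cancel, and after expanding each $m_i$ as an $A$-linear combination of elements $\mathsf{d}(c)$ and using the commutativity of $\mathsf{T}^2(A)$, the surviving monomials assemble precisely into $\widehat{\psi}$ applied to $\omega(\nabla(\mathsf{d}(a)))$; since $\omega(\nabla(\mathsf{d}(a))) = \widehat{\nabla}(\mathsf{d}(a))$ by the definitions of $\omega$ and $\widehat{\nabla}$, this yields $\mathsf{V}_{\mathsf{K}_\nabla}(\mathsf{d}(a)) = \widehat{\psi}(\widehat{\nabla}(\mathsf{d}(a)))$.

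The ``moreover'' then follows at once: if $\widehat{\nabla} = 0$, then $\mathsf{V}_{\mathsf{K}_\nabla}(\mathsf{d}(a)) = \widehat{\psi}(0) = 0$ while $\mathsf{V}_{\mathsf{K}_\nabla}(a) = a$, which is exactly the generator description of torsion-freeness of $(\mathsf{K}_\nabla, \mathsf{H}_\nabla)$ recorded just above. The step I expect to require the most care is the bookkeeping in the first paragraph: correctly tracking, under the identification $\mathsf{S}_A(\Omega(A)) = \mathsf{T}(A)$, which of the structure maps ($\mathsf{T}(\mathsf{q}_M)$, $\mathsf{p}_{\mathsf{S}_A(M)}$, $\ell$, $\mathsf{c}_A$) convert an ``inner'' $\mathsf{d}$ into an ``outer'' $\mathsf{d}^\prime$ inside $\mathsf{T}^2(A)$, so that $\nabla_\mathsf{K}$, and hence $\mathsf{K}_\nabla$, is pinned down with the correct primes; everything after that is a routine unwinding that is strictly simpler than --- and formally parallel to --- the proof of Lemma \ref{lemma:curvature1}, since torsion involves neither a second application of $\nabla$ nor the map $\mathsf{T}(\mathsf{K}_\nabla)$.
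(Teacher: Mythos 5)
Your route is genuinely different from the paper's, and it is viable. The paper deliberately avoids the vertical connection here, remarking that it is ``easier to work out the torsion \ldots using the horizontal connection formula'': it computes $\mathsf{V}^\flat_{\mathsf{K}_\nabla}$ on the generators $a$ and $\mathsf{d}^\prime\mathsf{d}(a)$ by chaining $\mathsf{H}_\nabla$, $\mathsf{U}_{\mathsf{p}_A}$ and $\mathsf{c}_A$ in the two orders, and then recovers $\mathsf{V}_{\mathsf{K}_\nabla}$ via the bracketing operation, $\mathsf{V}_{\mathsf{K}_\nabla}(\mathsf{d}(a)) = \mathsf{V}^\flat_{\mathsf{K}_\nabla}(\mathsf{d}^\prime\mathsf{d}(a))$. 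You instead evaluate $\mathsf{c}_A\circ\mathsf{K}_\nabla -_{\mathsf{p}_A}\mathsf{K}_\nabla$ head-on from the generator formula $\mathsf{K}_\nabla(\mathsf{d}(a)) = \mathsf{d}^\prime\mathsf{d}(a) - \nabla_\mathsf{K}(\mathsf{d}(a))$, where, writing $\nabla(\mathsf{d}(a)) = \sum_i a_i\,\mathsf{d}(b_i)\otimes_A\mathsf{d}(c_i)$, one has $\nabla_\mathsf{K}(\mathsf{d}(a)) = \sum_i a_i\,\mathsf{d}^\prime(b_i)\mathsf{d}(c_i)$. Your bookkeeping of which differentials become primed under $\mathsf{U}_{\mathsf{q}_M}$ is correct --- it matches both the paper's table for $\mathsf{U}_{\mathsf{p}_A}$ and the affine vertical connection written out in Ex \ref{ex:connections_on_plane} --- and your route buys a real simplification: the $\mathsf{d}^\prime\mathsf{d}(a)$ terms cancel visibly in the subtraction, so no bracketing is needed and, as you note, no $\mathsf{T}(\mathsf{K}_\nabla)$ appears. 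What the paper's route buys is reuse of the $\mathsf{H}_\nabla$ and $\mathsf{U}_{\mathsf{p}_A}$ computations already carried out earlier in the section, in parallel with Prop \ref{prop:torsion-H}.

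The one step you assert rather than derive --- that ``the surviving monomials assemble precisely into $\widehat{\psi}$ applied to $\omega(\nabla(\mathsf{d}(a)))$'' --- is exactly where the only delicate point sits, namely the overall sign. Carrying your subtraction out literally gives $\mathsf{c}_A(\mathsf{K}_\nabla(\mathsf{d}(a))) - \mathsf{K}_\nabla(\mathsf{d}(a)) = \sum_i a_i\mathsf{d}^\prime(b_i)\mathsf{d}(c_i) - \sum_i a_i\mathsf{d}(b_i)\mathsf{d}^\prime(c_i)$, which is $-\widehat{\psi}\left(\widehat{\nabla}(\mathsf{d}(a))\right)$, i.e.\ the statement's right-hand side with the opposite sign (the paper's own $\mathsf{V}^\flat$ computation records the two terms in the other order). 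Since $\widehat{\psi}(\widehat{\nabla}(\mathsf{d}(a)))$ and its negative vanish together, this has no effect on the ``moreover'' clause or on Cor \ref{cor:torsion-2}, but it is precisely the kind of op-category sign that must be pinned down: you should make explicit which of the two maps is the minuend in $-_{\mathsf{p}_A}$ when the tangent-category expression $\mathsf{K}\circ\mathsf{c}_A -_{\mathsf{p}_A}\mathsf{K}$ is read in $k\text{-}\mathsf{CALG}$, and confirm that your answer agrees in sign with $\widehat{\psi}\circ\widehat{\nabla}$ as the lemma states it, rather than taking that agreement for granted.
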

\begin{proof} For the calculations of this proof, we set: 
\begin{align*}
    \nabla(\mathsf{d}(a)) = \sum^n_{i=1} a_i \mathsf{d}(b_i) \otimes_A \mathsf{d}(c_i) 
\end{align*}
As such: 
\begin{gather*}
   \widehat{\nabla}(\mathsf{d}(a)) = \sum^n_{i=1} a_i \left( \mathsf{d}(b_i) \wedge \mathsf{d}(c_i) \right)
\end{gather*}
and so: 
\begin{gather*}
   \omega\left( \widehat{\nabla}(\mathsf{d}(a)) \right) = \sum^n_{i=1} a_i\mathsf{d}(b_i) \mathsf{d}^\prime(c_i) - \sum^n_{i=1} a_i\mathsf{d}^\prime(b_i) \mathsf{d}(c_i)
\end{gather*}
On the other hand, it will be easier to work out the torsion of the induced affine connection using the horizontal connection formula. So by the bracketing operation, the torsion is given on generators as:
\begin{gather*}
    \mathsf{V}_{\mathsf{K}_\nabla}(a) = \lbrace \mathsf{V}^\flat_{\mathsf{K}_\nabla} \rbrace(a) = \mathsf{V}^\flat_{\mathsf{K}_\nabla} (a) \\  
    \mathsf{V}_{\mathsf{K}_\nabla}(\mathsf{d}(a)) = \lbrace \mathsf{V}^\flat_{\mathsf{K}_\nabla} \rbrace(\mathsf{d}(a)) = \mathsf{V}^\flat_{\mathsf{K}_\nabla} (\mathsf{d}^\prime\mathsf{d}(a)) 
\end{gather*}
So let's work out $\mathsf{V}_{\mathsf{K}_\nabla}$ on these generators. First note that by definition, we have that the horizontal connection on the generators $a$ and $\mathsf{d}^\prime\mathsf{d}(a)$ gives:  
\begin{align*}
    \mathsf{H}_\nabla(a) = a \otimes_A 1 &&     \mathsf{H}_\nabla(\mathsf{d}^\prime\mathsf{d}(a)) = \nabla( \mathsf{d}(a) ) =   \sum^n_{i=1} a_i \mathsf{d}(b_i) \otimes_A \mathsf{d}(c_i) 
\end{align*}
while $\mathsf{U}_{\mathsf{p}_A}$ on these generators is: 
\begin{align*}
   \mathsf{U}_{\mathsf{p}_A}(a \otimes_A b) = ab &&    \mathsf{U}_{\mathsf{p}_A}(\mathsf{d}(a) \otimes_A \mathsf{d}(b)) = \mathsf{d}^\prime(a) \mathsf{d}(b)
\end{align*}
So, the composite on these generators gives us: 
\begin{align*}
   \mathsf{U}_{\mathsf{p}_A}\left( \mathsf{H}_\nabla(a) \right) = a &&    \mathsf{U}_{\mathsf{p}_A}\left( \mathsf{H}_\nabla(\mathsf{d}^\prime\mathsf{d}(a)) \right) = \sum^n_{i=1} a_i \mathsf{d}^\prime(b_i)\mathsf{d}(c_i) 
\end{align*}
Now recall that the canonical flip on the generators $a$ and $\mathsf{d}^\prime\mathsf{d}(a)$ does nothing, so we get that: 
\begin{align*}
   \mathsf{U}_{\mathsf{p}_A}\left( \mathsf{H}_\nabla(\mathsf{c}_A(a)) \right) = a &&    \mathsf{U}_{\mathsf{p}_A}\left( \mathsf{H}_\nabla(\mathsf{c}_A \left(\mathsf{d}^\prime\mathsf{d}(a) \right) ) \right) = \sum^n_{i=1} a_i \mathsf{d}^\prime(b_i)\mathsf{d}(c_i) 
\end{align*}
On the other hand, the canonical flip swaps $\mathsf{d}$ and $\mathsf{d}^\prime$, so we also get that: 
\begin{align*}
  \mathsf{c}_A\left( \mathsf{U}_{\mathsf{p}_A}\left( \mathsf{H}_\nabla(a) \right) \right) = a &&     \mathsf{c}_A \left(  \mathsf{U}_{\mathsf{p}_A}\left( \mathsf{H}_\nabla(\mathsf{d}^\prime\mathsf{d}(a)) \right) \right) = \sum^n_{i=1} a_i \mathsf{d}(b_i)\mathsf{d}^\prime(c_i) 
\end{align*}
So their subtraction (in the differential bundle sense) on these generators is: 
\begin{gather*}
\mathsf{V}^\flat_{\mathsf{K}_\nabla}(a) = a \\ 
   \mathsf{V}^\flat_{\mathsf{K}_\nabla} (\mathsf{d}^\prime\mathsf{d}(a))  = \sum^n_{i=1} a_i\mathsf{d}(b_i) \mathsf{d}^\prime(c_i) - \sum^n_{i=1} a_i\mathsf{d}^\prime(b_i) \mathsf{d}(c_i)
\end{gather*}
So we conclude that $ \mathsf{V}_{\mathsf{K}_\nabla}(a) = a$ and $\mathsf{V}_{\mathsf{K}_\nabla}(\mathsf{d}(a)) = \widehat{\psi}\left(  \widehat{\nabla}(\mathsf{d}(a))  \right)$ as desired. Now if $\widehat{\nabla} =0$, then $\mathsf{V}_{\mathsf{K}_\nabla}(a) = a$ and $\mathsf{V}_{\mathsf{K}_\nabla}(\mathsf{d}(a)) = 0$, so we get that $(\mathsf{K}_\nabla, \mathsf{H}_\nabla)$ is torsion-free. 
\end{proof}

Conversely, similarly to the situation for curvature, expressing $\widehat{\nabla}$ in terms of $\mathsf{V}_{\mathsf{K}_\nabla}$ is usually only possible up to a factor of $2$. So define the map $\widehat{\phi}: \mathsf{T}^2(A) \to \Omega^2(A)$ on monomials of the form $a \mathsf{d}(b) \mathsf{d}^\prime(c)$ as follows: 
\[ \widehat{\phi} (a \mathsf{d}(b) \mathsf{d}^\prime(c)) = a \mathsf{d}(a) \wedge \mathsf{d}(c) \]
and mapping monomials of other forms to $0$, and then extend by linearity. 

\begin{lemma} The following equality holds: 
\[ 2 \widehat{\nabla}(\mathsf{d}(a)) = \widehat{\phi}\left( \mathsf{V}_{\mathsf{K}_\nabla}(\mathsf{d}(a)) \right) \]
\end{lemma}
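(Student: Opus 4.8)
The plan is to mimic exactly the argument used for curvature in Lemma~\ref{lemma:curvature2}: first establish that $\widehat{\phi}$ is a ``half-retract'' of $\widehat{\psi}$, i.e.\ that $\widehat{\phi} \circ \widehat{\psi} = 2 \cdot 1_{\Omega^2(A)}$, and then post-compose the identity from Lemma~\ref{lemma:torsion1} by $\widehat{\phi}$. Since both maps are $k$-linear, it suffices to verify the composite on a monomial $a\,\mathsf{d}(b) \wedge \mathsf{d}(c)$ generating $\Omega^2(A)$.

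First I would compute $\widehat{\phi}\bigl(\widehat{\psi}(a\,\mathsf{d}(b) \wedge \mathsf{d}(c))\bigr)$. By definition, $\widehat{\psi}(a\,\mathsf{d}(b) \wedge \mathsf{d}(c)) = a\,\mathsf{d}(b)\mathsf{d}^\prime(c) - a\,\mathsf{d}^\prime(b)\mathsf{d}(c)$ in $\mathsf{T}^2(A)$. The first summand is already a monomial of the accepted form $a\,\mathsf{d}(b)\mathsf{d}^\prime(c)$, so $\widehat{\phi}$ sends it to $a\,\mathsf{d}(b) \wedge \mathsf{d}(c)$. For the second summand, the key observation is that $\mathsf{T}^2(A)$ is a commutative ring, so $a\,\mathsf{d}^\prime(b)\mathsf{d}(c) = a\,\mathsf{d}(c)\mathsf{d}^\prime(b)$, which \emph{is} a monomial of the form accepted by $\widehat{\phi}$ (with the roles of the two arguments swapped); hence $\widehat{\phi}(a\,\mathsf{d}^\prime(b)\mathsf{d}(c)) = a\,\mathsf{d}(c) \wedge \mathsf{d}(b) = -\,a\,\mathsf{d}(b) \wedge \mathsf{d}(c)$ using the antisymmetry of the wedge product. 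Adding these gives $2\bigl(a\,\mathsf{d}(b) \wedge \mathsf{d}(c)\bigr)$, so $\widehat{\phi} \circ \widehat{\psi} = 2\cdot 1_{\Omega^2(A)}$.

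Then, recalling from Lemma~\ref{lemma:torsion1} that $\mathsf{V}_{\mathsf{K}_\nabla}(\mathsf{d}(a)) = \widehat{\psi}\bigl(\widehat{\nabla}(\mathsf{d}(a))\bigr)$, post-composing both sides by $\widehat{\phi}$ yields $\widehat{\phi}\bigl(\mathsf{V}_{\mathsf{K}_\nabla}(\mathsf{d}(a))\bigr) = \widehat{\phi}\bigl(\widehat{\psi}(\widehat{\nabla}(\mathsf{d}(a)))\bigr) = 2\,\widehat{\nabla}(\mathsf{d}(a))$, which is the claimed equality.

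The only genuine subtlety — the ``main obstacle,'' though it is minor — is the bookkeeping in the second summand: one must recognize that $a\,\mathsf{d}^\prime(b)\mathsf{d}(c)$ is not sent to $0$ by $\widehat{\phi}$ (it is not literally written in the form $a\,\mathsf{d}(-)\mathsf{d}^\prime(-)$, but it equals such a monomial by commutativity), and that rewriting it this way forces the sign flip from $\mathsf{d}(c)\wedge\mathsf{d}(b)$. This is precisely the source of the factor $2$, exactly as in the curvature case.
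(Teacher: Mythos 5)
Your proposal is correct and follows essentially the same route as the paper: verify $\widehat{\phi}\circ\widehat{\psi}=2\cdot 1_{\Omega^2(A)}$ on monomials (the paper defers this to "the same argument as in the curvature case," which is exactly the computation you spell out, including the commutativity rewrite and the sign flip from antisymmetry of $\wedge$), then post-compose the identity of Lemma \ref{lemma:torsion1} by $\widehat{\phi}$. No gaps.
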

\begin{proof} By similar arguments as in the proof of Lemma \ref{lemma:curvature2}, we get that $\widehat{\phi}(\widehat{\psi}(x)) = 2 x$. Now by Lemma \ref{lemma:torsion1}, $\mathsf{V}_{\mathsf{K}_\nabla}(\mathsf{d}(a)) = \widehat{\psi}\left(  \widehat{\nabla}(\mathsf{d}(a))  \right)$. So post-composing both sides by $\widehat{\phi}$ gives us that $2 \widehat{\nabla}(\mathsf{d}(a)) = \widehat{\phi}\left( \mathsf{V}_{\mathsf{K}_\nabla}(\mathsf{d}(a)) \right)$. 
\end{proof}

As such, it follows that if we are in a setting where $2$ is invertible, then the torsion of a connection on $\Omega(A)$ is determined by the torsion of its associated affine connection on $A$. Moreover, in such a setting, we also get that the torsion of a module connection on $\Omega(A)$ is zero if and only if its associated affine connection on $A$ is torsion-free. 

\begin{corollary}\label{cor:torsion-2} If $2$ is a unit in $A$, then the following equality holds: 
\[ \widehat{\nabla}(m) = \frac{1}{2}\widehat{\phi}\left( \mathsf{V}_{\mathsf{K}_\nabla}(\mathsf{d}(a)) \right) \]
Furthermore, $\widehat{\nabla} = 0$ if and only if $(\mathsf{K}_\nabla, \mathsf{H}_\nabla)$ is torsion-free.
\end{corollary}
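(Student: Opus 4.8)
The plan is to obtain Corollary~\ref{cor:torsion-2} as a purely formal consequence of Lemma~\ref{lemma:torsion1} together with the lemma immediately preceding it, in exact parallel with how Corollary~\ref{cor:curvatures_match} follows from Lemmas~\ref{lemma:curvature1} and~\ref{lemma:curvature2}. For the displayed equality I would start from the identity $2\,\widehat{\nabla}(\mathsf{d}(a)) = \widehat{\phi}\bigl(\mathsf{V}_{\mathsf{K}_\nabla}(\mathsf{d}(a))\bigr)$ established just above. Since by hypothesis $2$ is a unit in $A$ and both sides of this identity are elements of the $A$-module $\Omega^2(A)$, I may multiply through by $\tfrac{1}{2}\in A$ to get $\widehat{\nabla}(\mathsf{d}(a)) = \tfrac{1}{2}\,\widehat{\phi}\bigl(\mathsf{V}_{\mathsf{K}_\nabla}(\mathsf{d}(a))\bigr)$ for every $a\in A$. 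As $\widehat{\nabla}$ is $k$-linear and $\Omega(A)$ is generated as an $A$-module by the elements $\mathsf{d}(a)$, this recovers $\widehat{\nabla}$ completely from the affine connection $(\mathsf{K}_\nabla,\mathsf{H}_\nabla)$.

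For the stated equivalence I would argue the two implications separately. If $\widehat{\nabla}=0$, then Lemma~\ref{lemma:torsion1} gives $\mathsf{V}_{\mathsf{K}_\nabla}(\mathsf{d}(a)) = \widehat{\psi}\bigl(\widehat{\nabla}(\mathsf{d}(a))\bigr) = \widehat{\psi}(0) = 0$ for all $a$, and $\mathsf{V}_{\mathsf{K}_\nabla}(a)=a$ always holds, so $(\mathsf{K}_\nabla,\mathsf{H}_\nabla)$ is torsion-free by definition; note this direction does not use that $2$ is invertible. Conversely, if $(\mathsf{K}_\nabla,\mathsf{H}_\nabla)$ is torsion-free, then $\mathsf{V}_{\mathsf{K}_\nabla}(\mathsf{d}(a))=0$ for all $a$, and applying the preceding lemma yields $2\,\widehat{\nabla}(\mathsf{d}(a)) = \widehat{\phi}(0) = 0$; since $2$ is a unit in $A$ we cancel it to conclude $\widehat{\nabla}(\mathsf{d}(a))=0$ for every $a\in A$, hence $\widehat{\nabla}=0$.

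I expect no genuine obstacle here: the whole mathematical content sits in Lemma~\ref{lemma:torsion1} and the preceding lemma, and the corollary just records the effect of inverting $2$, which simultaneously strips the factor $2$ from $2\widehat{\nabla}=\widehat{\phi}\circ\mathsf{V}_{\mathsf{K}_\nabla}\circ\mathsf{d}$ and renders $\widehat{\psi}$ injective, so that vanishing of $\widehat{\psi}\circ\widehat{\nabla}\circ\mathsf{d}$ forces vanishing of $\widehat{\nabla}\circ\mathsf{d}$. The only point worth a line of care is that ``$\widehat{\nabla}=0$'' should be read at the level of the generators $\mathsf{d}(a)$, which is precisely the level at which torsion-freeness of $(\mathsf{K}_\nabla,\mathsf{H}_\nabla)$ is formulated; granting this, the two conditions match term for term, exactly as in the curvature case.
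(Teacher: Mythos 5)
Your proposal is correct and matches the paper's (implicit) argument exactly: the paper states this corollary without proof as an immediate consequence of Lemma \ref{lemma:torsion1} and the preceding factor-of-$2$ lemma, and your two-implication unpacking, including the observation that only the converse direction needs $2$ to be invertible, is precisely what is intended. Your closing remark that both ``$\widehat{\nabla}=0$'' and torsion-freeness must be read on the generators $\mathsf{d}(a)$ is the right reading of the statement (note that $\widehat{\nabla}$ is not $A$-linear, so vanishing on generators is genuinely the relevant condition, just as in the curvature case).
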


\subsection{Curvature Examples} \label{sec:sexamples-curvature}

We now consider the curvature of some of the examples from Sec \ref{sec:examples}.  

\begin{example}
Recall from Ex \ref{ex:connections_on_plane} that the canonical module connection on the Kähler differentials of the $R$-affine plane $A = R[x,y]$ is defined by
    \[ \nabla(\mathsf{d}(x_1)) = \nabla(\mathsf{d}(x_2)) = 0 \]
 We claim that this module connection is flat; that is, that $\widehat{\nabla}$ is constantly zero. Indeed, consider what $\widehat{\nabla}$ does to a term of the form $p(x,y)\mathsf{d}(x_i)$ (for $i \in \{1,2\}$, and where $p(x,y)$ is an arbitrary element of $A$).  After applying $\nabla$, one gets
    \[ \left( \frac{\mathsf{d}p(x,y)}{\mathsf{d}(x_1)} \mathsf{d}(x_1) + \frac{\mathsf{d}p(x,y)}{\mathsf{d}(x_2)}\mathsf{d}(x_2) \right) \otimes \mathsf{d}(x_i) \]
Then applying $1 \otimes \nabla$ and then $\omega \otimes 1$ gives
  { \small{ \[ \left( \frac{\mathsf{d}^2p(x,y)}{\mathsf{d}(x_1) \mathsf{d}(x_1)}\mathsf{d}(x_1) \wedge \mathsf{d}(x_1) + \frac{\mathsf{d}^2p(x,y)}{\mathsf{d}(x_1) \mathsf{d}(x_2)} \mathsf{d}(x_1) \wedge \mathsf{d}(x_2) + \frac{\mathsf{d}^2p(x,y)}{\mathsf{d}(x_2) \mathsf{d}(x_1)}\mathsf{d}(x_2) \wedge \mathsf{d}(x_1) + \frac{\mathsf{d}^2p(x,y)}{\mathsf{d}(x_2) \mathsf{d}(x_2)}\mathsf{d}(x_2) \wedge \mathsf{d}(x_2) \right) \otimes \mathsf{d}(x_i) \]} } %
which is indeed equal to $0$ by antisymmetry of $\wedge$ and symmetry of mixed partial derivatives.  
\end{example}

\begin{example}
There exist non-flat connections on the module of Kähler differentials of the affine plane; for example, it is easy to see that:
\begin{align*}
    \nabla(\mathsf{d}(x_1)) := x_2\mathsf{d}(x_1) \otimes \mathsf{d}(x_1) && \nabla(\mathsf{d}(x_2)) := 0
\end{align*}
is not flat (for example, by checking what $\widehat{\nabla}$ does to $\mathsf{d}(x_1)$; we leave this to the reader).  
\end{example}

\begin{example}
For a different type of example, consider again the affine circle
    \[ A := R[x_1, x_2]/(x_1^2 + x_2^2 - 1) \]
for a commutative ring $R$ of characteristic not equal to $2$. We claim that all module connections on the module of Kähler differentials of $A$ are flat. To prove this, it suffices to show that $\Omega^2(A)$ is the zero module. For this, consider a term of the form (for simplicity, we also drop the subscript of the $\wedge$ product of elements): 
    \[ \mathsf{d}(x_1) \wedge \mathsf{d}(x_2) \]
In $A$, $x_1^2 + x_2^2 = 1$, so we can rewrite the above as: 
\begin{align} \label{Geoff-star}
    x_1^2 (\mathsf{d}(x_1) \wedge \mathsf{d}(x_2)) + x_2^2 (\mathsf{d}(x_1) \wedge \mathsf{d}(x_2))  
\end{align}
But recall that in $\Omega(A)$, we have that: 
    \[ 2x_1\mathsf{d}(x_1) + 2x_2\mathsf{d}(x_2), \]
Since the characteristic of $R$ is not equal to $2$, we then get that: 
    \[ x_1\mathsf{d}(x_1) = -x_2\mathsf{d}(x_2) \]
As such, we can rewrite (\ref{Geoff-star}) as: 
    \[ -x_1x_2(\mathsf{d}(x_1) \wedge \mathsf{d}(x_1)) - x_1x_2 (\mathsf{d}(x_2) \wedge \mathsf{d}(x_2)) \]
which is equal to $0$. Thus, all terms in $\Omega^2(A)$ are equal to $0$.  Thus for any module connection $\nabla$ on $\Omega(A)$, the codomain of its curvature $\Omega^2(A)$ is the $0$ module, and hence all such connections are flat. We stress that this is particular to the circle; it does not apply to the higher dimensional spheres (Ex \ref{ex:higher_spheres}). 
\end{example}

\subsection{Tangent Category Connections for Schemes}

The category of schemes is a Rosický tangent category \cite[Prop 4.6]{cruttwell2023differential}, and the differential bundles over a scheme correspond to quasi-coherent sheaves \cite[Thm 4.28]{cruttwell2023differential}.  The results of Sec \ref{sec:tan_cat_connections_in_aff} for affine schemes extend immediately to connections in this more general setting; we invite the reader to see \cite[Def 1.0] {katz1970nilpotent} for the definition of connections on quasi-coherent sheaves.   

\begin{corollary}\label{cor:scheme_connections}
If $B$ is a scheme, $X$ a scheme over $B$, and $E$ a quasi-coherent sheaf on it, then the data for a $B$-connection on $E$ is equivalent to the data of a tangent category connection on the associated differential bundle over $X$ in the Rosický tangent category of schemes over $B$.
\end{corollary}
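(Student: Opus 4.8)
The plan is to deduce this from the affine case, Theorem~\ref{thm:affine_connection_correspondence}, by a gluing argument that exploits the fact that both sides of the claimed equivalence are \emph{local} data on $X$.

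First I would recall from \cite{cruttwell2023differential} that the Cartesian Rosick\'y tangent structure on schemes over $B$ restricts, over an affine open $U = \mathsf{Spec}(A) \subseteq X$ lying over an affine open $\mathsf{Spec}(k) \subseteq B$, to exactly the affine tangent structure on $k\text{-}\mathsf{CALG}^{op}$ of Section~\ref{sec:AFF-tangcat}: the tangent bundle of $X/B$ is locally $\mathsf{T}(A) = \mathsf{S}_A(\Omega(A))$, the sheaf of relative K\"ahler differentials $\Omega_{X/B}$ restricts to $\Omega(A)$, and the differential bundle over $X$ associated (via \cite[Thm 4.28]{cruttwell2023differential}) to a quasi-coherent sheaf $E$ restricts to the differential bundle $\mathsf{q}_M : \mathsf{S}_A(M) \to A$ of the $A$-module $M = E(U)$. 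Consequently a $B$-connection on $E$ in the sense of \cite{katz1970nilpotent} restricts over $U$ to a module connection on $M$ in the sense of Definition~\ref{def:module-connection}, and the Basic Condition holds for these differential bundles for the same reason it does in the affine case (all the relevant pushouts exist in the relevant algebra categories and are preserved). The essential point to record here is that all of this is compatible with passing to a smaller affine open: the formulas defining $\mathsf{H}_\nabla$ on generators $a, m, \mathsf{d}(a), \mathsf{d}(m)$, and those defining the inverse assignment $\mathsf{H} \mapsto \nabla_{\mathsf{H}}$, are ``universal'' and hence commute with localization.

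Next I would observe that a $B$-connection on $E$ is precisely an $\mathcal{O}_B$-linear morphism of sheaves $\nabla : E \to \Omega_{X/B} \otimes_{\mathcal{O}_X} E$ satisfying the Leibniz rule, and such data is determined by, and can be assembled from, a compatible family of its restrictions to the members of an affine open cover $\{U_i\}$ of $X$ adapted to $B$. Symmetrically, by Proposition~\ref{prop:ver-hor} and Proposition~\ref{prop:hor-ver}, a tangent category connection on the associated differential bundle is completely determined by its horizontal part $\mathsf{H}$, which is a morphism of $B$-schemes $\mathsf{T}(X)\times_X \mathsf{S}(E) \to \mathsf{T}(\mathsf{S}(E))$, and the axioms \textbf{[H.1]}--\textbf{[H.4]} are local (they can be checked on the affine opens $U_i$). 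Hence a tangent category connection on the differential bundle of $E$ over $X$ is the same thing as a compatible family of tangent category connections on the differential bundles of the $E|_{U_i}$.

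Putting these two observations together: on each $U_i$, Theorem~\ref{thm:affine_connection_correspondence} provides a bijection between module connections on $E(U_i)$ and tangent category connections on $\mathsf{q}_{E(U_i)}$; by the compatibility with localization noted above these bijections are natural with respect to restriction to $U_i \cap U_j$, so they glue to a bijection between the global objects, which is the asserted equivalence of data. I expect the only genuinely delicate step to be this gluing/naturality bookkeeping --- verifying that the affine-local correspondence of Theorem~\ref{thm:affine_connection_correspondence} really is compatible with the restriction maps of the structure sheaves and of the tangent bundle functor, i.e.\ that the tangent category structure on schemes over $B$ is ``locally'' the affine one and that the horizontal connection, being a scheme morphism, is recovered from its affine restrictions. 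Everything else is a direct transcription of the affine argument.
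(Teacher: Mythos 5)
Your proposal is correct and follows the same route as the paper: the paper's own proof is just the one-line observation that the corollary follows from Theorem~\ref{thm:affine_connection_correspondence} because everything (the tangent structure, the differential bundle associated to $E$, and both notions of connection) is defined affine-locally. You have simply spelled out the gluing and localization-compatibility bookkeeping that the paper leaves implicit.
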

\begin{proof}
This follows from Thm \ref{thm:affine_connection_correspondence} and the fact that everything is defined affine-locally.  
\end{proof}

A particularly useful example to illustrate this idea is the lack of connections on the sheaf of Kähler differentials of the projective line.  It is worth first noting that this fact can be seen as a corollary of more general results, as follows:

\begin{enumerate}[(i)]
    \item The Kähler module of the projective line $P_1$ is the line bundle ${\mathcal O}_{P_1}(-2)$ \cite[21.4.2]{vakil}.
    \item Over a field of characteristic zero, a line bundle $L$ admits a connection if and only if its first Chern class vanishes\footnote{This seems to be a well-known folklore result; for example, see \url{https://mathoverflow.net/questions/123942/}}.
    \item For any integer $m \ne 0$, the first Chern class of ${\mathcal O}_{P_1}(m)$ is non-vanishing \cite[Ex 11]{computing_chern}.  
\end{enumerate} 
Combining these results together tells us that there are no connections on the module of Kähler differentials, and thus no tangent category connection on the tangent bundle of the projective line either. However, it will also be useful to calculate this explicitly, as it illustrates how connections on each affine patch must interact with one another.  

\begin{example}\label{ex:no_connection_projective_line}
Recall that the projective line over a commtuative ring $R$ is given by gluing two copies of the affine line $A_1 = R[x]$ and $A_2 = R[y]$ along their open sets $R[x]_x$ (i.e. $R[x]$ localized at the multiplicative set generated by $x$) and $R[y]_y$ via the isomorphism $R[x]_x \to R[y]_y$ given by:
\[x \mapsto \frac{1}{y}\] 
This induces an isomorphism between the module of Kähler differentials $\Omega(R[x]_x) \to \Omega(R[y]_y)$ given by:
    \[ \mathsf{d}(x) \mapsto \mathsf{d} \left( \frac{1}{y} \right) = \frac{-1}{y^2} \mathsf{d}(y) \]
Then, giving a connection on this scheme is equivalent to giving a connection on each copy of the affine line, which is compatible with this gluing. That is, one must give module connections $\nabla_1$ on $R[x]$ and $\nabla_2$ on $R[y]$ such that the following diagram commutes
\[
\xymatrix{\Omega(R[x]_x) \ar[rr]^-{\nabla_1} \ar[d] & & \Omega(R[x]_x) \otimes \Omega(R[x]_x) \ar[d] \\ \Omega(R[y]_y) \ar[rr]_-{\nabla_2} & & \Omega(R[y]_y) \otimes \Omega(R[y]_y)}
\]
where the arrows going down are the transition isomorphisms described above. By Ex \ref{ex:connections_on_affine_n_space}, $\nabla_1$ and $\nabla_2$ must be of the form
\begin{align*}
    \nabla_1(\mathsf{d}(x)) = p(x)\mathsf{d}(x) \otimes \mathsf{d}(x) && \nabla_2(\mathsf{d}(y)) = q(y) \mathsf{d}(y) \otimes \mathsf{d}(y)
\end{align*}
Now consider starting with $\mathsf{d}(x)$ in the top-left of the above diagram.  Going right then down gives
    \[ \mathsf{d}(x) \mapsto p(x) \mathsf{d}(x) \otimes \mathsf{d}(x) \mapsto p\left (\frac{1}{y} \right) \left[ \frac{-1}{y^2}\mathsf{d}(y) \otimes \frac{-1}{y^2}\mathsf{d}(y) \right]  = \frac{1}{y^4} p\left( \frac{1}{y} \right) \mathsf{d}(y) \otimes \mathsf{d}(y) \]
while going down then right gives (where the last equality is by the Leibniz rule for $\nabla_2$): 
    \[ \mathsf{d}(x) \mapsto \frac{-1}{y^2} \mathsf{d}(y) \mapsto \frac{2}{y^3} \mathsf{d}(y) \otimes \mathsf{d}(y) - \frac{1}{y^2} q(y) \mathsf{d}(y) \otimes \mathsf{d}(y) \]
Putting these together means one would need to find polynomials $p(x)$ and $q(y)$ so that
    \[ \frac{2}{y^3} = \frac{1}{y^2}q(y) + \frac{1}{y^4}p \left( \frac{1}{y} \right) \]
But unless the characteristic of $R$ is $2$, the expression on the left is a term of degree $-3$, while the first expression on the right is of degree $\geq -2$ and the second expression of degree $\leq -4$. Thus, unless the characteristic of $R$ is $2$, it is impossible to find such $p$ and $q$, and hence, no connection can exist on the projective line in this case. If $R$ is of characteristic $2$, then $p(x) = q(y) = 0$ is a connection on the projective line over $R$ (in fact, the only one).  
\end{example}

\section{Applying Tangent Category Theory}\label{sec:applying_theory}

In this section, we discuss how general results about connections in a tangent category apply to the particular case of the tangent category of affine schemes.  In many cases, these results recreate previously known results in algebraic geometry, but in some cases, we get new results and/or highlight results that are not that prominent (but perhaps should be).

We begin by briefly recalling four essential results about the existence of connections in an arbitrary Rosický tangent category. 
\begin{enumerate}[(i)]
    \item \textbf{Differential objects have unique connections}: In a \emph{Cartesian} Rosický tangent category \cite[Def 2.8]{cockett2014differential} (i.e. a tangent category with finite products that are preserved up to isomorphism by the tangent bundle functor), a \textbf{differential object} \cite[Prop 3.4]{cockett2018differential} is a differential bundle over the terminal object. Every such differential bundle has a \emph{unique} connection on it \cite[Prop. 5.3]{connections}. 
    \item \textbf{Connections are preserved by pullback}: If $\mathsf{q}: E \to A$ is a differential bundle with a connection and $f: X \to A$ is any map for which the pullback $f^\ast(\mathsf{q})$ of $q$ along $f$ exists and is preserved by $\mathsf{T}$, then $f^\ast(\mathsf{q}) \to X$ is a differential bundle and inherits a connection from $q$ \cite[Prop. 5.6]{connections}.
    \item \textbf{Connections are preserved by application of $\mathsf{T}$}: If $\mathsf{q}: E \to A$ is a differential bundle with a connection, then $\mathsf{T}(\mathsf{q}): \mathsf{T}(E) \to \mathsf{T}(A)$ inherits a connection from $\mathsf{q}$ \cite[Prop. 5.5]{connections}.
    \item \textbf{Connections are preserved by retract}: If $\mathsf{q}: E \to A$ is a differential bundle with a connection, $\mathsf{q}^\prime: E^\prime \to A^\prime$ is a differential bundle, and we have a section/retraction pair $\mathsf{q}^\prime \to \mathsf{q}$ and ${\mathsf{q} \to \mathsf{q}}$ of differential bundle morphisms, then $\mathsf{q}^\prime$ inherits a connection\footnote{We note that in \cite[Prop. 4.12]{connections}, the statement is for a tangent category, and thus only says that one obtains a \emph{horizontal} connection on $\mathsf{q}^\prime$. Indeed, as discussed after \cite[Ex 5.7]{connections}, a retract of a full connection is not necessarily a full connection. However, since in this paper, we are working in a Rosický tangent category, we do indeed obtain a full connection.} from $\mathsf{q}$ \cite[Prop. 4.12]{connections}.
\end{enumerate}

By the results of this paper, we then get four corresponding results about connections on modules/quasi-coherent sheaves of modules on affine schemes/schemes. 

\begin{proposition}\label{prop:schmes:1-4} Let $A$ be a scheme. Then:
\begin{enumerate}[(i)]
    \item \label{prop:schmes:1} If $M$ is a quasi-coherent sheaf of modules on $A$, then $M$ has a natural choice of $A$-connection.
    \item \label{prop:schmes:2} Suppose $B$ and $X$ are $A$-schemes, $M$ is a quasi-coherent sheaf of modules on $B$, that $M$ has an $A$-connection $\nabla$, and we have a scheme morphism $f: X \to B$.  Then the corresponding inverse image/pullback $X$-sheaf of modules $f^\ast(M)$ also has an $A$-connection.
    \item \label{prop:schmes:3} Suppose $B$ is an $A$-scheme, $M$ is a quasi-coherent sheaf of modules on $B$, and $M$ has an $A$-connection $\nabla$. Then the $\mathsf{T}(A)$-sheaf $\Omega_{\mathsf{T}(A)}(\mathsf{Spec}(\mathsf{Sym}(M))$ has an $A$-connection.  
    \item \label{prop:schmes:4} Suppose $B$ is an $A$-scheme, $M$ and $M^\prime$ are quasi-coherent sheafs of modules on $B$ with a pair of section/retraction maps $s: M^\prime \to M$ and $r: M \to M^\prime$, and $M$ has an $A$-connection $\nabla$.  Then $M^\prime$ also has an $A$-connection.   
\end{enumerate}
\end{proposition}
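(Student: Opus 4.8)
The plan is to derive each of the four items by transporting the correspondingly-numbered general fact about connections in a Rosický tangent category (recalled as (i)--(iv) above) across the equivalences of Theorem~\ref{thm:affine_connection_correspondence} and Corollary~\ref{cor:scheme_connections}. In every case the ambient setting is the Cartesian Rosický tangent category of schemes over $A$. Two preliminary observations make the transport work. First, as noted in Section~\ref{sec:AFF-tangcat}, on affine schemes the tangent functor $\mathsf{T}$ is a left adjoint (its adjoint being the dual-numbers functor), so it preserves all pullbacks; since $\mathsf{T}$ and all the constructions below are defined affine-locally, the same holds for schemes over $A$, and the Basic Condition is automatic. Second, the dictionary reads: a quasi-coherent sheaf $M$ on an $A$-scheme $B$ corresponds to the differential bundle $\mathsf{q}_M \colon \mathsf{Spec}(\mathsf{Sym}_B(M)) \to B$, and an $A$-connection on $M$ corresponds (by Theorem~\ref{thm:affine_connection_correspondence}, glued affine-locally) to a tangent category connection on $\mathsf{q}_M$.

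Granting this, the four items are handled as follows. For (i): the differential bundles over the terminal object $A$ of schemes-over-$A$ are precisely the differential objects, and they correspond to quasi-coherent sheaves on $A$ itself; fact (i) then assigns to each a unique connection, which translates to a canonical (in fact unique) $A$-connection on $M$, explicitly the zero map $M \to \Omega_{A/A} \otimes_A M = 0$. For (ii): from $f^\ast \mathsf{Sym}_B(M) \cong \mathsf{Sym}_X(f^\ast M)$ we get that the pullback of $\mathsf{q}_M$ along $f \colon X \to B$ is $\mathsf{q}_{f^\ast M}$, a pullback that $\mathsf{T}$ preserves, so fact (ii) produces a connection on $\mathsf{q}_{f^\ast M}$, i.e. an $A$-connection on $f^\ast M$. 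For (iii): fact (iii) equips the differential bundle $\mathsf{T}(\mathsf{q}_M) \colon \mathsf{T}(\mathsf{Spec}(\mathsf{Sym}_B(M))) \to \mathsf{T}(B)$ with a connection, and unwinding the dictionary over the base $\mathsf{T}(B)$ identifies this differential bundle, and its connection, with the sheaf and $A$-connection appearing in the statement. For (iv): the section/retraction pair $s \colon M^\prime \to M$, $r \colon M \to M^\prime$ induces, via $\mathsf{Sym}$ and the (contravariant) equivalence, a section/retraction pair of differential bundle morphisms between $\mathsf{q}_{M^\prime}$ and $\mathsf{q}_M$; fact (iv) then gives a horizontal connection on $\mathsf{q}_{M^\prime}$, which, because we work in a Rosický tangent category, Proposition~\ref{prop:hor-ver} upgrades to a full connection, hence to an $A$-connection on $M^\prime$.

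The genuinely routine content is verifying that the sheaf-theoretic operations (the symmetric-algebra construction, inverse image of quasi-coherent sheaves, relative Kähler differentials) intertwine with the tangent-categorical ones ($\mathsf{q}_{(-)}$, pullback of differential bundles, $\mathsf{T}$), together with the affine-local gluing that lifts Section~\ref{sec:tan_cat_connections_in_aff} from affine schemes to arbitrary schemes. The main obstacle is item (iii): one must identify precisely which quasi-coherent sheaf on $\mathsf{T}(B)$ corresponds to the tangent bundle $\mathsf{T}(\mathsf{q}_M)$ of the differential bundle $\mathsf{q}_M$ (recalling from Section~\ref{sec:diff_bun} that the lift of $\mathsf{T}(\mathsf{q}_M)$ is twisted by the canonical flip $\mathsf{c}$), and then confirm that this identification is compatible with the connection supplied by fact (iii). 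Everything else is bookkeeping through the two established equivalences.
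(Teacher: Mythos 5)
Your proposal is correct and follows exactly the route the paper intends: the proposition is presented there as an immediate consequence of the four recalled tangent-category facts transported across Theorem \ref{thm:affine_connection_correspondence} and Corollary \ref{cor:scheme_connections}, with no further proof given. You in fact supply more detail than the paper does (the Basic Condition, the identification of pullbacks of $\mathsf{q}_M$, the $\Omega_{A/A}=0$ observation for item (i), and the horizontal-to-full upgrade for item (iv)), all of which is consistent with the paper's surrounding discussion.
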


It is worth commenting on the importance of these results. 
\begin{enumerate}[(i)]
    \item This is immediate without any recourse to tangent category theory. For example, in the affine case of a module $M$ over a base commutative ring $R$, an $R$-connection would consist of a map  
    \[ \nabla: M \to \Omega(k) \otimes_R M \]
    but $\Omega(k) \cong \mathsf{0}$, so this must simply be the constantly zero map.
    \item This seems to be a well-known result in algebraic geometry, though we can find no precise reference in papers on connections.  The particular case of flat connections pulling back can be found in any textbook on $D$-module theory (more discussion on this below).     
    \item We cannot find any reference to a result like this.
    \item Once again, this result does not seem to be that well-known, though it is not difficult to prove directly. In the affine case, given modules $M$ and $M^\prime$ with a section/retraction pair $s: M^\prime \to M$ and ${r: M \to M^\prime}$, and a connection $\nabla$ on $M^\prime$, we get a connection $\nabla^\prime$ on $M^\prime$ by the composite:
\[ \nabla^\prime := \xymatrixrowsep{1pc}\xymatrixcolsep{5pc}\xymatrix{ M^\prime \ar[r]^-{s} & M \ar[r]^-{\nabla} & \Omega(A) \otimes_A M \ar[r]^-{1 \otimes r} & \Omega(A) \otimes_A M^\prime} \] 
\end{enumerate}

In addition, combinations of these results also lead to more interesting conclusions.  Combining (\ref{prop:schmes:1}) and (\ref{prop:schmes:2}), we get the following:  

\begin{lemma}\label{lemma:pullback_connections}
Let $A$ be a scheme. If $M$ is a quasi-coherent sheaf of modules on $A$ and $B$ is an $A$-scheme, then the pullback $B$-quasi-coherent sheaf $B \otimes_A M$ has a canonical choice of connection $\nabla$ given locally by
    \[ \nabla(a \otimes m) = \mathsf{d}(a) \otimes (1 \otimes m) \]
\end{lemma}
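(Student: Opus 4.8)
The plan is to obtain $\nabla$ formally, by composing parts (\ref{prop:schmes:1}) and (\ref{prop:schmes:2}) of Proposition \ref{prop:schmes:1-4}, and then to read off the stated local formula. Since connections on schemes are defined affine-locally (see Corollary \ref{cor:scheme_connections}), I would first reduce to the affine case: write $A = \mathsf{Spec}(R)$ and $B = \mathsf{Spec}(S)$ for a ring homomorphism $f \colon R \to S$, and let $M$ be the sheaf associated to an $R$-module $N$, so that $B \otimes_A M$ corresponds to the $S$-module $S \otimes_R N$ and an $A$-connection on it is an $R$-linear map $S \otimes_R N \to \Omega_{S/R} \otimes_S (S \otimes_R N)$ satisfying the Leibniz rule (\ref{eq:connection-leibniz}).

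Next I would invoke part (\ref{prop:schmes:1}) of Proposition \ref{prop:schmes:1-4} with $M$ regarded as a sheaf on the $A$-scheme $A$ itself: this produces a natural $A$-connection $\nabla_M$ on $M$, which, as in the discussion following that proposition, must be the zero map since its target $\Omega_{R/R} \otimes_R N$ vanishes. Then I would apply part (\ref{prop:schmes:2}) of Proposition \ref{prop:schmes:1-4} along the structure morphism $f \colon A \to B$, taking the ``$B$'' and ``$X$'' of that statement to be $A$ and $B$ respectively; this is legitimate because $\mathsf{T}$ preserves the relevant pullbacks (as recorded for affine schemes in Section \ref{sec:AFF-tangcat}, and the general case is checked affine-locally). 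The upshot is that the pullback sheaf $f^\ast(M) = B \otimes_A M$ inherits an $A$-connection $\nabla := f^\ast(\nabla_M)$.

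It then remains to identify $\nabla$ explicitly. I would unwind the pullback-of-connections construction of \cite[Prop. 5.6]{connections} (equivalently, its module-theoretic shadow): on a generator $s \otimes n$ it produces $\mathsf{d}(s) \otimes (1 \otimes n) + s \cdot f^\ast(\nabla_M(n))$, so feeding in $\nabla_M = 0$ collapses this to $\nabla(s \otimes n) = \mathsf{d}(s) \otimes (1 \otimes n)$, as claimed. Alternatively, and perhaps more transparently for the writeup, I would check the formula $\nabla(s \otimes n) = \mathsf{d}(s) \otimes (1 \otimes n)$ directly: it is a well-defined $R$-linear map (well-definedness on the tensor product uses $\mathsf{d}(r) = 0$ in $\Omega_{S/R}$ for $r \in R$), it satisfies the Leibniz rule, and it coincides with the connection produced by the two cited results. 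I do not expect a substantive obstacle here; the only thing requiring genuine care is keeping straight which scheme plays the role of the base of the tangent category at each application of Proposition \ref{prop:schmes:1-4}.
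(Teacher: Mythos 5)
Your proposal is correct and follows essentially the same route as the paper: the paper's proof is a one-liner combining parts (\ref{prop:schmes:1}) and (\ref{prop:schmes:2}) of Proposition \ref{prop:schmes:1-4} (the canonical zero connection on $M$ over $A$, pulled back along the structure morphism $B \to A$) and then reading the stated formula off the pullback construction, which is exactly what you do, with the extra (harmless) direct verification of $R$-balancedness and the Leibniz rule. The only nit is the phrase ``structure morphism $f\colon A \to B$,'' which has the arrow reversed at the level of schemes, though your subsequent identification of the roles of $X$ and the base in Proposition \ref{prop:schmes:1-4}(\ref{prop:schmes:2}) is correct.
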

\begin{proof}
This is simply a result of combining (\ref{prop:schmes:1}) and (\ref{prop:schmes:2}) from Prop \ref{prop:schmes:1-4}; the particular form of the connection is found by applying the pullback connection construction to the canonical connection on $M$.  
\end{proof}

In the affine case, when $M$ is a free $A$-module on $n$ generators, we have $B \otimes_A M \cong B^n$, and the corresponding connection is precisely the one described in Ex \ref{ex:conn_an}, which recall was induced by the map given by: 
\[ (b_1, b_2, \hdots,, b_n) \mapsto (\mathsf{d}(b_1), \mathsf{d}(b_2), \hdots, \mathsf{d}(b_n)) \]

Then combined with (\ref{prop:schmes:4}), we then get the following:

\begin{lemma}
If $A$ is a commutative $R$-algebra, $B$ an $A$-algebra, and $M$ a finitely generated projective $B$-module, then $M$ has an $A$-connection.
\end{lemma}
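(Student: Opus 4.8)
The plan is to realize $M$ as a retract of a finitely generated free $B$-module and then to combine two of the transfer results already established: the pullback connection produced in Lemma~\ref{lemma:pullback_connections}, and the preservation of connections under retracts recorded in Proposition~\ref{prop:schmes:1-4}(\ref{prop:schmes:4}). No new computation is needed beyond these; the whole argument is an assembly of earlier facts.

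Concretely, I would proceed in three steps. First, since $M$ is a finitely generated projective $B$-module, it is a direct summand of a finitely generated free module: there is an integer $n$ together with $B$-linear maps $s \colon M \to B^n$ and $r \colon B^n \to M$ satisfying $r \circ s = 1_M$. Second, I would note that $B^n$ carries a canonical $A$-connection. Indeed $B^n \cong B \otimes_A A^n$, so Lemma~\ref{lemma:pullback_connections}, applied to the free $A$-module $A^n$ (whose canonical connection is the one of Ex~\ref{ex:conn_an}) and to the $A$-algebra $B$, yields an $A$-connection $\nabla$ on $B^n$, given on generators by $\nabla(a \otimes m) = \mathsf{d}(a) \otimes (1 \otimes m)$. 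Third, I would feed the section/retraction pair $(s, r)$ and the connection $\nabla$ on $B^n$ into Proposition~\ref{prop:schmes:1-4}(\ref{prop:schmes:4}) — in its affine form, as made explicit in the remark following that proposition. This produces the desired $A$-connection on $M$, explicitly the composite $\nabla' = (1 \otimes r) \circ \nabla \circ s$.

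I do not expect a genuine obstacle here. The points that warrant a sentence of care are: that ``finitely generated projective'' really does furnish a retract of a \emph{free} module of finite rank (the standard characterization of projectivity), so that we remain within the hypotheses used above; that $s$ and $r$ are $B$-linear, so that the composite defining $\nabla'$ is well-typed as a $k$-linear map $M \to \Omega \otimes_B M$ satisfying the Leibniz rule; and that, because we are working in a Rosický tangent category, the retract of a \emph{full} connection is again a full connection rather than merely a horizontal one, as discussed in the footnote to Proposition~\ref{prop:schmes:1-4}(\ref{prop:schmes:4}). Finally one should remark that the resulting $A$-connection on $M$ depends on the chosen realization of $M$ as a summand of $B^n$, so it is canonical only relative to that choice. (If one prefers, the same conclusion follows from the scheme-level statements of Proposition~\ref{prop:schmes:1-4} by working affine-locally, but the affine argument above is the most direct.)
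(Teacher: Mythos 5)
Your argument is correct and is essentially the paper's own proof: realize $M$ as a retract of a finite free $B$-module, equip $B^n \cong B \otimes_A A^n$ with the canonical pullback connection of Lemma~\ref{lemma:pullback_connections} (equivalently Ex~\ref{ex:conn_an}), and transfer it along the section/retraction pair via Proposition~\ref{prop:schmes:1-4}(\ref{prop:schmes:4}). The extra care you note about Rosick\'y tangent categories and the dependence on the chosen splitting is accurate but not needed beyond what the paper already records.
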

\begin{proof}
If $M$ is finitely generated and projective, then it is a retract of a finite free module, so by (\ref{prop:schmes:4}), and the result above about connections on free modules, $M$ acquires a connection.  
\end{proof}

This particular result appears in \cite[Prop. 2.5]{rojas_mendoza}.  However, it's worth noting that it also implies the following, which we have not found a reference for (although it is surely also well-known):

\begin{corollary}\label{cor:smooth_have_connections}
If $B$ is a smooth $A$-algebra (see \cite[Sec 10.137.1]{stacks-project} for the definition of a smooth algebra), then its module of Kähler differentials over $A$, $\Omega_A(B)$, has a connection.  
\end{corollary}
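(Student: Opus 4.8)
The plan is to reduce this immediately to the preceding lemma, which already establishes that any finitely generated projective module over a $B$-algebra admits an $A$-connection. Thus the only ingredient I would need to supply is the standard fact from algebraic geometry that, for a smooth $A$-algebra $B$, the module of Kähler differentials $\Omega_A(B)$ is a finitely generated projective $B$-module (equivalently, locally free of finite rank). This is built into the usual characterizations of smoothness --- see for instance \cite{vakil} --- so I would simply cite it rather than reprove it.

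Concretely, I would first invoke smoothness of $B$ over $A$ to conclude that $M := \Omega_A(B)$ is finitely generated and projective as a $B$-module. I would then apply the preceding lemma to this $M$: being a retract of a finite free $B$-module, $M$ inherits an $A$-connection, obtained by transporting the canonical connection on a finite free module (Lemma \ref{lemma:pullback_connections}) along the section/retraction pair using the retract-stability of connections in a Rosický tangent category. Unwinding the types, the resulting datum is exactly an $A$-linear map
\[ \nabla \colon \Omega_A(B) \to \Omega_A(B) \otimes_B \Omega_A(B) \]
satisfying the Leibniz rule, i.e. an affine connection on the $A$-scheme $\mathsf{Spec}(B)$, as desired.

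I do not anticipate a genuine obstacle: the chain of lemmas does essentially all of the work, and the single non-formal input is the cited structural fact about $\Omega_A(B)$. The one place meriting a sentence of care is the choice of definition of ``smooth $A$-algebra''; I would state explicitly which definition is in force, or simply remark that all of the standard ones --- the Jacobian criterion, formal smoothness together with finite presentation, or flat and finitely presented with smooth geometric fibres --- have the consequence that $\Omega_A(B)$ is finitely generated projective, so that the reduction is unambiguously valid. Beyond that, the proof is a two-line assembly of results already at hand.
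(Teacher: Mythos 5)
Your proposal is correct and matches the paper's own proof exactly: the paper likewise cites the standard fact (via the Stacks Project) that $\Omega_A(B)$ is finitely generated projective for $B$ smooth over $A$, and then applies the preceding lemma on finitely generated projective modules. The extra remarks on which characterization of smoothness is in force are a reasonable clarification but do not change the argument.
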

\begin{proof}
By \cite[Sec 10.142.(2)]{stacks-project}, if $B$ is smooth then $\Omega_A(B)$ is finitely generated projective, so by the previous result, acquires a connection.  
\end{proof}

As mentioned earlier, this seems to be a non-obvious result.  That is, if you are given a particular smooth affine variety, it's not clear how to define a connection on its module of Kähler differentials.  However, the above not only shows that it is always possible to do so, but also gives an explicit way to find such a connection. To illustrate this idea with a particular example, we return to the affine circle. In Ex \ref{ex:affine_circle_connection}, we gave a connection on its module of Kähler differentials; here, we show how to view this as a retract of the canonical connection on a free module.  

\begin{example}\label{ex:affine_circle_revisited} Let $R$ be a commutative ring of characteristic not equal to 2, and let $B$ be the affine circle 
    \[ R[x,y]/\<x^2 + y^2 -1\>. \]
Then recall that its module of Kähler differentials can be described as: 
    \[ \Omega(B) = B\mathsf{d}(x) \oplus B\mathsf{d}(y)/\<2x\mathsf{d}(x) + 2y\mathsf{d}(y)\> \]
Moreover, $\Omega(B)$ is a retract of the free $B$-module $B\mathsf{d}(x) \oplus B\mathsf{d}(y)$; the retraction $r$ is the quotient map, and the section 
    \[ s: B\mathsf{d}(x) \oplus B\mathsf{d}(y)/\<2x\mathsf{d}(x) + 2y\mathsf{d}(y)\> \to B\mathsf{d}(x) \oplus B\mathsf{d}(y) \]
is given on generators by: 
\begin{align*}
    \mathsf{d}(x) \mapsto y^2\mathsf{d}(x) - xy\mathsf{d}(y) && \mathsf{d}(y) \mapsto -xy\mathsf{d}(x) + x^2\mathsf{d}(y)
\end{align*}
As above, there is a canonical connection $\nabla$ on $B\mathsf{d}(x) \oplus B\mathsf{d}(y)$ which sends both $\mathsf{d}(x)$ and $\mathsf{d}(y)$ to $0$.  Then as above, the induced connection $\nabla'$ on $\Omega(B)$ is given by the composite
    \[ \Omega(B) \to^{s} B\mathsf{d}(x) \oplus B\mathsf{d}(y) \to^{\nabla} \Omega_{B} \otimes (B\mathsf{d}(x) \oplus B\mathsf{d}(y)) \to^{1 \otimes r} \Omega(B) \otimes \Omega(B) \]
Starting with $\mathsf{d}(x)$, applying $s$ we get: 
    \[ y^2\mathsf{d}(x) - xy \mathsf{d}(y) \]
then applying $\nabla$ we get: 
    \[ d(y^2) \otimes \mathsf{d}(x) + y^2 \nabla(\mathsf{d}(x)) - d(xy) \otimes \mathsf{d}(y) - xy \nabla(\mathsf{d}(y))  = 2y\mathsf{d}(y) \otimes \mathsf{d}(x) - x\mathsf{d}(y) \otimes \mathsf{d}(y) - y \mathsf{d}(x) \otimes \mathsf{d}(y)\]
Then applying the quotient map $r$, this is an element of $\Omega(B)$; in this module $x\mathsf{d}(x) = -y\mathsf{d}(y)$, and hence the above reduces to
    \[ -x\mathsf{d}(x) \otimes \mathsf{d}(x) - x \mathsf{d}(y) \otimes \mathsf{d}(y) \]
which is exactly what the connection on the affine circle described in Ex \ref{ex:affine_circle_connection} does to $\mathsf{d}(x)$ (and one can similarly calculate $\mathsf{d}(y)$). 
\end{example}

It is also worth highlighting that Cor \ref{cor:smooth_have_connections} is \emph{not} true for smooth (non-affine) schemes; see Ex \ref{ex:no_connection_projective_line}.  

The last point of theory we'd like to discuss has to do with constructing new tangent categories.  In \cite[Thm 5.16]{affine_tan_cats}, it is shown that for any tangent category, one can make new tangent categories of objects equipped with a connection on their tangent bundle. There is also a full subcategory consisting of the objects whose connections are flat. Though we will not go into details here, one can similarly construct tangent categories whose objects are differential bundles with a connection over a fixed base (and a subcategory of objects whose connections are flat). This then shows that over a fixed base (affine) scheme $A$, one can construct a tangent category whose objects are quasi-coherent sheaves of modules with a (flat) connection. This is particularly relevant for the study of $D$-modules, as these are nothing more than modules over some fixed base equipped with a flat connection \cite[Lemma 1.2.1]{d-modules}.  Thus, the category of $D$-modules can be given the structure of a tangent category.   We hope to explore this point of view in future work.   

\section{Conclusion}\label{sec:conclusion}

The main results of this paper are as follows: 
\begin{enumerate}[(i)]
\item In the tangent category of commutative algebras, there are no non-trivial connections (Proposition \ref{prop:no_conn_in_algebra}).
\item In the tangent category of affine schemes, connections on differential bundles exactly correspond to connections on modules (Theorem \ref{thm:affine_connection_correspondence}).
\item In the tangent category of schemes, connections on differential bundles exactly correspond to connections on quasi-coherent sheaves of modules (Corollary \ref{cor:scheme_connections}).
\item Up to a factor of 2, the tangent category notion of curvature (torsion) and the module definition of curvature (torsion) correspond (Corollary \ref{cor:curvatures_match} and Corollary \ref{cor:torsion-2}).
\end{enumerate}

We conclude this paper by discussing some interesting future research projects that build upon the results and observations of this paper that we hope to pursue. 

\begin{itemize}
    \item As mentioned at the end of the previous section, $D$-modules can be given the structure of a tangent category.  What do various notions from tangent category theory look like when applied to this category?  Does this recreate existing ideas in $D$-module theory and/or tell us anything new about these objects?
    \item Recent work has shown that for any operad, the opposite of its category of algebras is a tangent category \cite{operad}.  This recreates the tangent category of affine schemes when applied to the commutative and unital operad. In addition, just as here, a differential bundle over an algebra $A$ is equivalent to an $A$-module \cite{diff_bundles_operad}.  This leads to the question of characterizing connections in this generality; in some cases, we expect this should recreate notions of connections in non-commutative geometry.  
    \item As first discussed by Ehresmann \cite{ehresmann:connections}, in differential geometry, connections can be considered on any submersion (that is, not just on vector bundles). In future work, we plan to develop a similar, more general notion of connection in any tangent category.  When applied to the tangent categories of affine schemes and/or schemes, it would then be interesting to compare it to the more general notion of connection in algebraic geometry due to Grothendieck (which generalizes the notion of connections on modules we have considered in this paper).  
\end{itemize}

\bibliographystyle{plain}      
\bibliography{connections-ref}   

\end{document}